\documentclass[12pt]{amsart}

\usepackage{hyperref}
\usepackage{color}
\usepackage{pinlabel,enumerate}
\usepackage{graphicx}
\usepackage{xypic}
\usepackage{booktabs}
\usepackage{amsmath,amsfonts,amssymb}

\theoremstyle{plain}
\newtheorem*{theorem*}{Theorem}
\newtheorem*{lemma*} {Lemma}
\newtheorem*{corollary*} {Corollary}
\newtheorem*{proposition*}{Proposition}
\newtheorem*{conjecture*}{Conjecture}
\newtheorem{theorem}{Theorem}[section]
\newtheorem{lemma}[theorem]{Lemma}
\newtheorem*{theorem1*}{Theorem 1}
\newtheorem*{theorem2*}{Theorem 2}
\newtheorem*{theorem3*}{Theorem 3}

\newtheorem{proposition}[theorem]{Proposition}

\theoremstyle{remark}
\newtheorem*{remark}{Remark}

\newtheorem{example*}{Example}
\newtheorem*{claim}{Claim}

\theoremstyle{definition}

\textwidth 6in    
\oddsidemargin.25in
\evensidemargin.25in
\marginparwidth=.85in

\def\G{\Gamma}

\def\op{\operatorname}
 \def\Q{\Bbb{Q}}  \def\Z{\Bbb{Z}}  
\def\N{\Bbb{N}}    
  \def\g{\gamma}  \def\bp{\begin{pmatrix}}
\def\sm{\setminus} \def\ep{\end{pmatrix}} \def\bn{\begin{enumerate}} 
   \def\en{\end{enumerate}}
\def\ba{\begin{array}} \def\ea{\end{array}} 
\def\int{\op{int}}     \def\ti{\widetilde}

\def\ker{\mbox{Ker}}\def\be{\begin{equation}} \def\ee{\end{equation}} 
   
 \def\hom{\mbox{Hom}}

\def\ol{\overline}

\def\wti{\widetilde}
\def\OO{\mathcal{O}}

\begin{document}
\title{The virtual fibering theorem for $3$-manifolds}
\author{Stefan Friedl}
\address{Fakult\"at f\"ur Mathematik\\ Universit\"at Regensburg\\   Germany}
\email{sfriedl@gmail.com}
\author{Takahiro Kitayama}
\address{Department of Mathematics, Tokyo Institute of Technology,
2-12-1 Ookayama, Meguro-ku, Tokyo 152-8551, Japan}
\email{kitayama@math.titech.ac.jp}

\date{\today}

\begin{abstract}
In 2007 Agol showed that if $N$ is an aspherical compact 3-manifold with empty or toroidal boundary such that $\pi_1(N)$ is virtually RFRS,
then $N$ is virtually fibered. We give a largely self-contained proof of Agol's  theorem  using complexities of sutured manifolds.
\end{abstract}

\maketitle

\section{Introduction}\label{section:rfrs}

In 1982 Thurston \cite[Question~18]{Th82} asked whether every  hyperbolic 3-manifold
is virtually fibered, i.e. whether every hyperbolic 3-manifold admits a finite cover which fibers over $S^1$.

Evidence towards an affirmative answer was given by many authors, including Agol--Boyer--Zhang \cite{ABZ08}, Aitchison--Rubinstein \cite{AR99}, Button \cite{Bu05}, DeBlois \cite{DeB10},
 Gabai \cite{Ga86}, Guo--Zhang \cite{GZ09},  Leininger \cite{Lei02}, Reid \cite{Re95}  and  Walsh \cite{Wa05}.

The first general virtual fibering theorem was proved by Agol in 2007.
In order to state the theorem we need two more definitions:
\bn
\item  a group $\pi$ is  \emph{residually finite rationally solvable} or \emph{RFRS} if
$\pi$ admits a filtration   $\pi=\pi_0\supset \pi_1 \supset \pi_2\dots $
such that the following hold:
\bn
\item $\cap_k \pi_k=\{1\}$,
\item for any $k$ the group $\pi_k$ is a normal, finite index subgroup of $\pi$,
\item for any $k$ the map $\pi_k\to \pi_k/\pi_{k+1}$ factors through $\pi_k\to H_1(\pi_k;\Z)/\mbox{torsion}$.
\en
\item Given a $3$-manifold $N$, we say $\phi\in H^1(N;\Q)=\hom(\pi_1(N),\Q)$ is \emph{fibered}
if there exists an $n\in \N$ and a locally trivial fiber bundle $p\colon N\to S^1$ such that $\phi=\frac{1}{n}\cdot p_*\colon \pi_1(N)\to \Q$.
\en
We can now state Agol's \cite[Theorem~5.1]{Ag08} theorem.

\begin{theorem}\label{thm:ag08intro}\textbf{\emph{(Agol)}}\label{thm:51}
Let $N$ be an irreducible   $3$--manifold with empty or toroidal boundary such that $\pi_1(N)$ is  virtually RFRS.
Let $\phi\in H^1(N;\Q)$ be non--trivial. Then  there exists a finite cover $q\colon \ti{N}\to N$
such that  $q^*\phi$ is the limit of a sequence of  fibered classes in $H^1(\wti{N};\Q)$.
\end{theorem}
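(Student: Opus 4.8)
The plan is to follow the Thurston-norm/sutured-manifold-hierarchy strategy. First I would reduce to the case where $\pi_1(N)$ is genuinely RFRS, not just virtually so: pass to a finite cover realizing the RFRS subgroup, and observe that being a limit of fibered classes is a property that descends appropriately, so it suffices to prove the statement for RFRS groups. Next, I would recall the key connection between fibered classes and the Thurston norm $\|\cdot\|_T$: a class $\phi\in H^1(N;\Q)$ lies in (the cone on) a fibered face of the Thurston norm ball if and only if it is a limit of fibered classes, and this in turn is detected by the existence of a taut sutured manifold decomposition of $(N,\gamma)$ compatible with $\phi$ in which the resulting complexity drops to zero. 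So the goal becomes: find a finite cover $\ti N\to N$ such that $(\ti N, q^*\phi)$ admits such a "nice" sutured manifold hierarchy ending in a product sutured manifold.

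The heart of the argument is an inductive construction using the RFRS filtration $\pi_1(N)=\pi_0\supset\pi_1\supset\cdots$. Starting from $(N,\phi)$, I would perform a Gabai-style sutured manifold decomposition along a norm-minimizing surface $S$ dual to $\phi$, obtaining a sutured manifold $(M',\gamma')$ of strictly smaller complexity (measured, say, by a lexicographically ordered triple recording Thurston norm, genus data, and number of pieces). The obstruction to continuing is that the decomposing surface for the next stage need not be "compatible" with the covering/homological data — this is exactly where RFRS enters. Condition (c) of the RFRS definition guarantees that after passing to the cover corresponding to $\pi_k$, every relevant first-homology class becomes visible, so one can choose the next decomposing surface to lift and to again be norm-minimizing and dual to (the pullback of) a class controlling the fibering. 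Iterating, and passing to a deeper finite cover at each stage as dictated by the filtration, the complexity strictly decreases, so after finitely many steps one reaches a product sutured manifold $D^2\times S^1$-pieces, i.e.\ the hierarchy terminates. Re-gluing shows that in the final cover $\ti N$, the class $q^*\phi$ lies on the closure of a fibered cone, hence is a limit of fibered classes by Thurston's fibration criterion.

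The main obstacle — and the technically heaviest part — is controlling the sutured manifold complexity through the decomposition-and-cover process: one must show (i) that a norm-minimizing surface dual to $\phi$ can be chosen to be compatible with the RFRS-provided cover so that it lifts nicely, (ii) that the complexity genuinely drops under each such decomposition (this is the Scharlemann/Gabai machinery, needing that the decomposing surface is taut and "well-groomed"), and (iii) that the bookkeeping relating the sutured complexity in $N$ to that in the cover behaves multiplicatively/monotonically, so the induction is well-founded. A secondary subtlety is the base case: identifying precisely which terminal sutured manifolds certify that $q^*\phi$ is a limit of fibered classes, and checking that "limit of fibered classes" (rather than fibered on the nose) is the right, and attainable, conclusion — this is why the theorem only claims $q^*\phi$ sits in the closure of a fibered cone.
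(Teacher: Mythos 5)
Your high-level plan is sound and aligns with the paper's strategy at the top level: reduce to the genuinely RFRS case via Lemma~\ref{lem:subordinate}~(4), translate ``limit of fibered classes'' into ``lies in the closure of a fibered cone,'' set up an induction whose base case is a product sutured manifold, and use RFRS to provide homological visibility in covers. However, the mechanism you describe for the inductive step is misconceived, and this is exactly the technical heart of the argument, so the gap is genuine.

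You frame the induction as a classical Gabai-style hierarchy: decompose $N$ along $R$ to get $(M',\gamma')$, then decompose $(M',\gamma')$ along a ``next'' surface, measuring a complexity on the successive sutured pieces. That is not how the argument works, and it is unclear how such a hierarchy would stay compatible with the single class $\phi$ after the first decomposition. In the paper the decomposition depth never grows: one decomposes $N_k$ along the surface $R_k$ (always dual to some $m[R]+\psi$ with $[R]$ subordinate to it) and then along admissible annuli to split $N_k(R_k)$ into \emph{windows} (product pieces) and \emph{guts} (non-product pieces). The complexity $f(N_k,R_k)$ is a lexicographic tuple built from Gabai's sutured-manifold complexity of the guts, the number of equivalence classes of guts under deck transformations and ambient isotopy, and their ``invisibility'' in the RFRS filtration. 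The inductive move is not ``cut the sutured manifold again''; it is ``pass to a cover $N_j$ where some gut becomes homologically visible, find $\psi\in H^1(N_j;\Z)$ nontrivial on that gut, and replace the surface $R_j$ by the double-curve sum $S=(R\times -3\cup R\times 3)\uplus F$ where $F$ is dual to $m[R_j]+\psi$ and carefully positioned (Properties (W1)--(W5) of Proposition~\ref{prop:perturbguts1}).'' One then applies Gabai's monotonicity theorem (Theorem~\ref{thm:complexityfunction}) gut-by-gut to conclude that $f(N_j,S)<f(N_j,R_j)$. Your sketch says nothing about guts, double-curve sums, or the positioning of $F$ relative to the windows and guts, and the lexicographic triple you propose (``Thurston norm, genus data, number of pieces'') is not a complexity on which the induction would run, since passing to a finite cover multiplies both the Thurston norm and the number of pieces.

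A second, related gap: you need to explain why, when you pass to a cover where the guts are invisible, the complexity nevertheless does not \emph{increase} (the guts pull back to diffeomorphic copies, and one must account for the multiplicity by counting equivalence classes under deck transformations), and why the ``invisibility'' index strictly decreases while the other entries stay constant. Without the equivalence-class bookkeeping the iteration is not well-founded, since the raw count of guts grows with the covering degree. These are precisely the reasons the complexity is a quintuple rather than a triple and why the paper spends Proposition~\ref{prop:pullbackguts} establishing how guts behave under finite covers. Finally, Proposition~\ref{prop:perturbguts1}---arranging $F$ to intersect each gut and window in a taut decomposition surface while remaining Thurston-norm-minimizing after the double-curve sum---is the single hardest part of the paper and is absent from your plan; without it there is no reason that cutting along $F$ should respect the guts/windows decomposition at all.
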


The key idea in the proof of the theorem is that the RFRS condition ensures that given a Thurston norm minimizing surface
one can find `enough' surfaces in finite covers to `reduce the complexity of  the guts' by perturbing the initial surface appropriately.
Agol uses the theory of `least-weight taut normal surfaces' introduced and developed by
Oertel \cite{Oe86} and Tollefson and Wang \cite{TW96} to carry through this program.

In the introduction to \cite{Ag08} Agol writes that `the natural setting [\dots] lies in sutured manifold hierarchies'.
We pick up this suggestion and provide a proof of Theorem \ref{thm:ag08intro}  using sutured manifolds and their hierarchies.
In our proof we only use standard results about the Thurston norm and  sutured manifold decompositions
(see \cite{Th86,Ga83}) and a complexity for sutured manifolds defined by Gabai \cite{Ga83}.
At the core our argument also follows the above `key idea', but for the most part the treatment of the argument is somewhat different
from Agol's original proof.
\medskip

In a stunning turn of events it has been shown over the last few years that  most $3$-manifold groups are in fact virtually RFRS.
More precisely, the following theorem was proved by Agol \cite{Ag13} and Wise \cite{Wi12}
in the hyperbolic case and by Przytycki-Wise \cite{PW12} in the case of a $3$-manifold with a non-trivial JSJ decomposition.

\begin{theorem}\textbf{\emph{(Agol, Przytycki-Wise, Wise)}}\label{thm:apw}
If $N$ is an irreducible  $3$-manifold with empty or toroidal boundary which is not a graph manifold, then $\pi_1(N)$ is virtually RFRS.
\end{theorem}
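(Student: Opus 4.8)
The plan is not to give a self-contained argument --- the statement packages several of the deepest recent results in $3$-manifold topology --- but to describe how it is assembled. The overall strategy is to prove the formally stronger assertion that $\pi_1(N)$ is \emph{virtually special}, i.e.\ that some finite-index subgroup $H\le\pi_1(N)$ embeds in a right-angled Artin group $A$, and then to deduce the RFRS property. The deduction uses two elementary facts: Agol \cite{Ag08} showed that right-angled Artin groups are RFRS, and the RFRS property passes to subgroups --- if $A=A_0\supset A_1\supset\cdots$ is an RFRS filtration then $H_k:=H\cap A_k$ is one for $H$, since $\bigcap_k H_k=\{1\}$, each $H_k$ is normal of finite index in $H$, and $\ker(H_k\to H_1(H_k;\Z)/\text{torsion})\subseteq\ker(A_k\to H_1(A_k;\Z)/\text{torsion})\cap H\subseteq A_{k+1}\cap H=H_{k+1}$. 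Hence $H$ is RFRS and $\pi_1(N)$ is virtually RFRS, so everything reduces to virtual specialness.

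First I would invoke the JSJ decomposition together with geometrization: $N$ is obtained by gluing along incompressible tori a finite collection of pieces, each Seifert fibered or finite-volume hyperbolic. As $N$ is not a graph manifold, at least one piece is hyperbolic, and there are two cases: the JSJ decomposition is trivial and $N$ itself is hyperbolic, or it is nontrivial and $N$ is a mixed manifold.

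In the hyperbolic case the argument splits by whether $N$ is cusped or closed. If $N$ has toroidal boundary, $\pi_1(N)$ is hyperbolic relative to its cusp subgroups and admits a quasiconvex hierarchy (cut repeatedly along embedded geometrically finite incompressible surfaces), so Wise's theorem \cite{Wi12} that such groups are virtually compact special applies. If $N$ is closed hyperbolic, one invokes in turn the surface subgroup theorem of Kahn--Markovic, the cubulation theorem of Bergeron--Wise (the resulting quasi-Fuchsian surface subgroups give a proper cocompact action of $\pi_1(N)$ on a $\mathrm{CAT}(0)$ cube complex), and Agol's theorem \cite{Ag13} that a word-hyperbolic group acting properly cocompactly on a $\mathrm{CAT}(0)$ cube complex is virtually special (via Wise's malnormal special quotient theorem). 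In the mixed case one cites Przytycki--Wise \cite{PW12}: compatible cubulations of the hyperbolic pieces (relatively hyperbolic version of the above) and of the Seifert pieces can be glued along the JSJ tori so that $\pi_1(N)$ again acts properly cocompactly on a $\mathrm{CAT}(0)$ cube complex with virtually special quotient. In every case $\pi_1(N)$ is virtually special, and the first paragraph concludes the proof.

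The main obstacle --- the genuinely hard input, which must be taken as a black box --- is the Agol--Wise package: the cubulation of hyperbolic $3$-manifold groups and the virtual specialness of cubulated word-hyperbolic and relatively hyperbolic groups. Granting these, the remaining ingredients (geometrization and the JSJ decomposition, the compatible gluing of cubulations in the mixed case, and the passage from virtual specialness to virtual RFRS via right-angled Artin groups) are comparatively routine.
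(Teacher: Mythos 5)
The paper does not prove this theorem; it is stated purely as a citation to Agol \cite{Ag13}, Wise \cite{Wi12} and Przytycki--Wise \cite{PW12}, with the case split (hyperbolic versus mixed) indicated exactly as you describe. Your sketch correctly identifies those sources, correctly reduces virtual RFRS to virtual specialness via Agol's observation in \cite{Ag08} that right-angled Artin groups are RFRS together with the heredity of RFRS under passage to subgroups, and so matches the paper's intended approach.
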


Furthermore it follows from work of Liu \cite{Li11} and Przytycki-Wise \cite{PW11} that the fundamental group of a graph manifold with boundary
is also virtually RFRS. Finally Liu \cite{Li11} showed that the fundamental group of a closed aspherical graph manifold
is virtually RFRS if and only if $N$ is non-positively curved, i.e. if it admits a Riemannian metric of non-positive curvature.
Combining these results with Theorem \ref{thm:51} we thus obtain the following result.

\begin{theorem}\label{thm:virtfib}\label{thm:apwfibintro}
Let $N$ be an irreducible   $3$--manifold with empty or toroidal boundary and let $\phi\in H^1(N;\Q)$ be non--trivial.
Suppose one of the following two conditions hold:
\bn
\item $N$ is not a closed graph manifold, or
\item $N$ is a closed graph manifold which is non-positively curved,
\en
then  there exists a finite cover $q\colon \ti{N}\to N$ such that  $q^*\phi$ is the limit of a sequence of  fibered classes in $H^1(\wti{N};\Q)$.
\end{theorem}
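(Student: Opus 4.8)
The plan is to show that under either hypothesis the group $\pi_1(N)$ is virtually RFRS, and then to invoke Theorem~\ref{thm:51} verbatim. First I would dispose of the degenerate situation: if $\pi_1(N)$ is finite, then $N$ is a closed spherical space form, so $H^1(N;\Q)=0$ and there is no non-trivial $\phi$ — nothing to prove. Hence we may assume $\pi_1(N)$ is infinite, so that $N$, being irreducible with infinite fundamental group, is aspherical. Since $N$ is irreducible with empty or toroidal boundary by assumption, the hypotheses of Theorem~\ref{thm:51} will be satisfied the moment virtual RFRS-ness of $\pi_1(N)$ is established.

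The verification of virtual RFRS-ness splits into three cases according to the geometry of $N$. If $N$ is not a graph manifold, then Theorem~\ref{thm:apw} (Agol, Przytycki--Wise, Wise) applies directly and gives that $\pi_1(N)$ is virtually RFRS. If $N$ is a graph manifold with non-empty boundary — note that such an $N$ is not closed, so this case lies under hypothesis~(1) — then the combined work of Liu \cite{Li11} and Przytycki--Wise \cite{PW11} shows that $\pi_1(N)$ is virtually RFRS. Finally, if $N$ is a closed graph manifold, then hypothesis~(2) forces $N$ to be non-positively curved, and Liu's theorem \cite{Li11} — that a closed aspherical graph manifold has virtually RFRS fundamental group exactly when it admits a Riemannian metric of non-positive curvature — gives the conclusion. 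In all three cases $\pi_1(N)$ is virtually RFRS.

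With virtual RFRS-ness in hand I would apply Theorem~\ref{thm:51} to $N$ and the non-trivial class $\phi\in H^1(N;\Q)$: since $N$ is irreducible with empty or toroidal boundary and $\pi_1(N)$ is virtually RFRS, there is a finite cover $q\colon \ti{N}\to N$ such that $q^*\phi$ is the limit of a sequence of fibered classes in $H^1(\ti{N};\Q)$. This is precisely the asserted statement, completing the argument.

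The substantive mathematical content of the theorem is carried entirely by Theorem~\ref{thm:51}, whose proof occupies the body of the paper, together with the deep (quoted) results of Agol, Liu, Przytycki--Wise and Wise; the deduction above is essentially bookkeeping. The only point demanding any care is the case partition: one must check that graph manifolds with boundary genuinely fall under hypothesis~(1), and that the geometric border cases are routed correctly — closed $\mathrm{Nil}$-manifolds are closed graph manifolds that fail to be non-positively curved (hence are excluded from the theorem, consistently with the ``only if'' half of Liu's result), while closed $\mathrm{Sol}$-manifolds are irreducible closed manifolds that are not graph manifolds and are thus handled by Theorem~\ref{thm:apw} under hypothesis~(1).
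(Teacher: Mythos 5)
Your proof is correct and follows essentially the same route as the paper: verify virtual RFRS-ness case by case (Agol--Przytycki--Wise--Wise when $N$ is not a graph manifold; Liu and Przytycki--Wise for graph manifolds with boundary; Liu for closed non-positively curved graph manifolds) and then invoke Theorem~\ref{thm:51}. One small factual correction to your closing aside, though it does not affect the argument: in the paper's convention closed $\mathrm{Sol}$-manifolds \emph{are} graph manifolds (see the Remark following the theorem, which cites them as examples of graph manifolds that are virtually fibered but not virtually RFRS), so they are excluded by hypothesis~(2) rather than covered by Theorem~\ref{thm:apw}.
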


\begin{remark}
\bn
\item If $\pi_1(N)$ is infinite and virtually RFRS, then there exists a finite cover with positive first Betti number.
It therefore follows from Theorem \ref{thm:virtfib} that if $N$ is an irreducible  $3$-manifold with empty or toroidal boundary which is not a graph manifold, then $N$ is virtually fibered. In particular Theorem \ref{thm:virtfib} gives an affirmative answer to
Thurston's question.
\item The work of Agol \cite{Ag13}, Przytycki-Wise \cite{PW12} and Wise \cite{Wi12} resolves not only Thurston's Virtual Fibering Conjecture but also has a long list of other consequences. We refer to the survey paper \cite{AFW12} for a comprehensive discussion.
\item Let $N$ be an aspherical   $3$--manifold. If $N$  is not a closed graph manifold, then it follows from
work of Leeb \cite{Leb95} (see also \cite{Br99,Br01}) that $N$ is non-positively curved.
Combining this with the above results we see that an aspherical $3$-manifold $N$ is non-positively curved if and only if $\pi_1(N)$ is virtually RFRS.
\item There are graph manifolds which are virtually fibered but whose fundamental groups are not virtually RFRS.
One class of such graph manifolds is given by Sol-manifolds (see e.g. \cite{Ag13,AFW12}).
\item It follows from classical arguments that the conclusion of Theorem \ref{thm:virtfib} in fact holds for any virtually fibered graph manifold.
\item The conclusion that any cohomology class can be approximated by fibered classes in a suitable finite cover
 has been crucial in the applications to twisted Alexander polynomials and to the study of surfaces of minimal complexity in $4$-manifolds with a free $S^1$-action
 (see  \cite{FV12,FV14}).
\en
\end{remark}

For completeness' sake we also mention Agol's result on finite covers of taut sutured manifolds,
even though it plays no role in the later part of the paper.
Using the argument in the proof of Theorem \ref{thm:51} and using a `doubling' argument Agol proves  that given any taut sutured manifold with virtually RFRS fundamental group
there exists a finite cover which admits  a depth one taut oriented foliation. (We will not define these notions  and we refer instead to  \cite{Ga83,Ag08,CC03} for background information and precise definitions.)
More precisely, Agol  \cite[Theorem 6.1]{Ag08} proved the following result.

\begin{theorem}\textbf{\emph{(Agol)}}\label{thm:61}
Let $(N,\g)$ be a  taut sutured manifold such that $\pi_1(N)$ is virtually RFRS.
Then there exists a finite covering $p\colon (\ti{N},\ti{\g})\to (N,\g)$ such that $(\ti{N},\ti{\g})$ admits a depth one taut oriented foliation.
\end{theorem}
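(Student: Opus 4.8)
The plan is to reduce Theorem~\ref{thm:61} to the method of proof of Theorem~\ref{thm:51} by a doubling construction. Let $D=N\cup_{R(\g)}N$ be the double of $N$ along $R(\g)=R_+(\g)\cup R_-(\g)$, with reflection involution $\tau$ exchanging the two copies $N_1,N_2$; we may assume $N$ is connected with $R_\pm(\g)\neq\emptyset$, the remaining cases being elementary (for instance a closed irreducible $N$ with virtually RFRS fundamental group has a finite cover of positive first Betti number which virtually fibers by Theorem~\ref{thm:51}, hence carries a depth zero taut oriented foliation). First I would record the standard properties of $D$ (cf.\ \cite{Ga83,Th86}): since $(N,\g)$ is taut, $D$ is irreducible, $\partial D$ is a union of tori (the doubles of the annular sutures together with two copies of each toral suture), and $R(\g)\subset D$ is Thurston norm minimizing; moreover $R_+(\g)$ is non-separating in $D$ and is homologous there to $R_-(\g)$, so the class $\phi:=PD[R(\g)]\in H^1(D;\Q)$ is non-trivial. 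Replacing $(N,\g)$ by the finite cover associated to a RFRS subgroup of $\pi_1(N)$ (this preserves tautness, and a finite cover of it is again a finite cover of $N$), we may also assume $\pi_1(N)$ is RFRS. The structural observation driving the argument is that decomposing $D$ along $R(\g)$ returns $N_1\sqcup N_2$, so the guts of this sutured manifold decomposition are exactly two disjoint copies of the guts of $(N,\g)$.

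The second and main step is to run the argument proving Theorem~\ref{thm:51} for the pair $(D,\phi)$, carried out $\tau$-equivariantly. That argument starts from a norm minimizing surface representing $\phi$ --- here $R(\g)$ --- forms the complementary sutured manifold $N_1\sqcup N_2$, and uses the finite covers provided by the RFRS property to produce surfaces that successively lower the complexity of the guts, terminating once the guts have become empty. Since those guts are confined to the two copies of $N$, at each stage the finite cover and the guts-reducing surface required by the argument may be chosen inside a single copy $N_i$, that is, inside a finite cover of $(N,\g)$ whose homology is controlled by the RFRS filtration of $\pi_1(N)$, and then duplicated by $\tau$; in this way every intermediate cover remains a double $D(\ti N)$ of a finite cover $\ti N\to N$, and only the RFRS property of $\pi_1(N)$, never of $\pi_1(D)$, is ever used. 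The outcome is a finite cover $D(\ti N)\to D$ in which decomposing $D(\ti N)$ along $R(\ti\g)$ produces $(\ti N,\ti\g)\sqcup(\ti N,\ti\g)$ with empty guts; equivalently, $(\ti N,\ti\g)$ is a taut sutured manifold with empty guts. I expect this $\tau$-equivariance to be the main obstacle: one must go through the proof of Theorem~\ref{thm:51} and verify that each of its moves can be confined to covers that are doubles, which rests on the localization of the guts recorded in the first step.

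For the last step, a taut sutured manifold with empty guts is assembled from its characteristic product and $I$-bundle pieces by gluing along annuli in the sutures; decomposing along the zero sections of the twisted $I$-bundle pieces --- after a single further double cover untwisting them, if one prefers to work only with two-sided decomposing surfaces --- exhibits it as admitting a sutured manifold hierarchy of length at most one terminating in a product sutured manifold. By Gabai's construction of taut oriented foliations from sutured manifold hierarchies \cite{Ga83}, $(\ti N,\ti\g)$ therefore carries a taut oriented foliation of depth at most one, tangent to $\partial\ti N$ and having $R_\pm(\ti\g)$ as its compact leaves. Composing the covering $\ti N\to N$ with the RFRS cover of the first step yields the desired finite covering $p\colon(\ti N,\ti\g)\to(N,\g)$, completing the proof.
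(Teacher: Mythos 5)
The paper does not itself prove Theorem~\ref{thm:61}: it states the result only for completeness, explicitly declines to define the foliation notions involved, and refers the reader to Agol \cite[Theorem~6.1]{Ag08}. It does give a one-sentence description of Agol's strategy (``the argument in the proof of Theorem~5.1 and~\dots~a `doubling' argument''), and it carries out an argument of exactly that flavour in Section~\ref{sec2}, where Theorem~\ref{thm:51} for bounded manifolds is deduced from the closed case via the double. Your outline matches that strategy, so the comparison below is against the paper's Section~\ref{sec2} machinery rather than against a proof the paper actually gives.

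Two substantive comments. First, the ``$\tau$-equivariance'' you single out as the main obstacle is indeed the crux, but equivariance is the wrong mechanism and would likely fail as stated: the cutting surface $F$ produced by Proposition~\ref{prop:perturbguts1} represents $m[R(\gamma)]+\psi$ and necessarily crosses $R(\gamma)$, so it cannot be ``chosen inside a single copy $N_i$'' and duplicated. What the paper uses instead, in Section~\ref{sec2}, is Lemma~\ref{lem:scholium}: one feeds $\pi_1(D)$ the filtration $\ker\bigl(\pi_1(D)\to\pi_1(N)\to\pi/\pi_k\bigr)$, which is \emph{not} a RFRS filtration of $\pi_1(D)$ (its intersection is the whole kernel of the retraction, not $\{1\}$), and observes that Lemma~\ref{lem:scholium} only needs $\cap_k(\pi_1(G)\cap\pi_k)=\{1\}$ for the \emph{guts} $G$. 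Since the guts of $D(R(\gamma))$ sit inside the two copies of $N$, where that pullback filtration does restrict to the RFRS filtration, the hypothesis holds. This replaces your equivariance step cleanly and avoids re-auditing every move in the Theorem~\ref{thm:51} proof.

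Second, and more seriously, your step two draws a conclusion the Theorem~\ref{thm:51} argument does not supply. That argument terminates when the guts of $D_j(R_j)$ vanish, where $R_j$ is the \emph{evolving} surface produced by iterated double curve sums, not the pulled-back $R(\ti\g)$. So the output (via Lemma~\ref{lem:scholium}) is a finite cover $\ti D\to D$ together with a fibered class to which $q^*[R(\g)]$ is subordinate; it does not say that decomposing $\ti D$ along $R(\ti\g)$ yields products, i.e.\ it does not say that $(\ti N,\ti\g)$ has empty guts. Bridging this is real work: one must take a fiber $R_j\subset\ti D$ (or the norm-minimizing double curve sum with $R(\ti\g)$), restrict it to $\ti N$, and show, using Proposition~\ref{prop:cutguts}(2) and the commuting square of Lemma~\ref{lem:doublecurvesum}, that decomposing $(\ti N,\ti\g)$ along the resulting admissible surface produces only products. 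That is the content you have compressed into ``equivalently, $(\ti N,\ti\g)$ is a taut sutured manifold with empty guts,'' and it is where the substance of Agol's Theorem~6.1 lies. A minor point on your final step: if the guts are empty in the paper's sense, the chosen admissible decomposition surface already cuts $(\ti N,\ti\g)$ into products, so there are no twisted $I$-bundle pieces and no further double cover is needed; the length-one hierarchy is just that decomposition, fed into Gabai's construction.
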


In the above discussion we already saw that the fundamental group of any irreducible $3$-manifold with non-trivial toroidal boundary is virtually RFRS. A straightforward doubling argument (see e.g. \cite[Section~5.3]{AFW12}) shows that in fact the fundamental group of any irreducible $3$-manifold with non-empty incompressible boundary is virtually RFRS.
Combining this observation with Theorem \ref{thm:61} we obtain  the following theorem.

\begin{theorem}
Let $(N,\g)$ be a  taut-sutured manifold.
Then there exists a finite covering $p\colon (\ti{N},\ti{\g})\to (N,\g)$ such that $(\ti{N},\ti{\g})$ admits a depth one taut-oriented foliation.
\end{theorem}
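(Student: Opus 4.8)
The plan is to deduce the statement directly from Theorem~\ref{thm:61}. That theorem already constructs the desired finite cover as soon as $\pi_1(N)$ is virtually RFRS, so the entire content of the claim is that a taut sutured manifold automatically has virtually RFRS fundamental group; once this is known, one applies Theorem~\ref{thm:61} to $(N,\g)$ and is finished.

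To verify this, I would argue as follows. By definition of tautness, $N$ is irreducible and $\partial N\neq\varnothing$, and moreover $R(\g)$ is incompressible in $N$: a compressing disk for $R(\g)$ could be used to compress it to a surface in the same class of $H_2(N,\g)$ of strictly smaller complexity, contradicting that $R(\g)$ is Thurston norm minimizing. If $\partial N$ is incompressible, then $\pi_1(N)$ is virtually RFRS by the observation recorded before the theorem---the fundamental group of any irreducible $3$-manifold with non-empty incompressible boundary is virtually RFRS (a doubling argument; see \cite[Section~5.3]{AFW12}). If $\partial N$ is compressible, then since $R(\g)$ is incompressible every compressing disk must meet $\g$; decomposing $(N,\g)$ repeatedly along such disks strictly simplifies $\partial N$, so the process terminates, splitting off $3$-balls and ending at irreducible $3$-manifolds with non-empty incompressible boundary, while on fundamental groups it writes $\pi_1(N)$ as a free product of the fundamental groups of the pieces. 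Since the pieces with incompressible boundary have virtually RFRS fundamental group by the previous case, free groups are RFRS, and a free product of virtually RFRS groups is again virtually RFRS, $\pi_1(N)$ is virtually RFRS in this case as well.

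With the hypothesis in hand, applying Theorem~\ref{thm:61} to the taut sutured manifold $(N,\g)$ yields a finite covering $p\colon(\ti{N},\ti{\g})\to(N,\g)$ such that $(\ti{N},\ti{\g})$ admits a depth one taut oriented foliation, which is exactly the assertion. The one point that calls for real care is the reduction in the compressible-boundary case---checking that the disk decompositions terminate at manifolds to which the doubling argument applies and bookkeeping the resulting free-product decomposition of $\pi_1(N)$; everything else is packaged inside Theorem~\ref{thm:61} and the cited results on virtual RFRS-ness, so this is the main (and essentially the only) obstacle.
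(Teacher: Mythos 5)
Your overall strategy is exactly the paper's: the theorem is Theorem~\ref{thm:61} plus the observation that $\pi_1(N)$ is automatically virtually RFRS for a taut sutured $(N,\g)$, and the paper records that observation in the sentence immediately preceding the statement (appealing to the doubling argument for irreducible $3$-manifolds with non-empty incompressible boundary, via~\cite[Section~5.3]{AFW12}).

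Where you add genuine content is in noting that the paper's one-liner, read literally, only covers the case where $\partial N$ itself is incompressible, whereas for a taut sutured manifold only $R(\g)=R_-\cup R_+$ is guaranteed incompressible; $\partial N$ can certainly be compressible (a product sutured manifold over a surface with boundary already gives a solid torus or handlebody). Your fix---iterated compressing-disk decompositions terminating in balls and pieces with incompressible boundary, together with the corresponding free-product decomposition of $\pi_1(N)$---is sound; the pieces stay irreducible, the process terminates on boundary complexity, and the terminal pieces have virtually RFRS fundamental groups by the doubling argument. But note that your argument silently invokes that (virtual) RFRS-ness is closed under free products. That fact is true but is not immediate from the definition (the naive filtration $\ker(G_1*G_2\to G_1/G_1^{(k)}\times G_2/G_2^{(k)})$ has nontrivial intersection) and should be justified or cited. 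A cleaner route that avoids the compressible-boundary issue entirely, and is presumably what~\cite{AFW12} has in mind: double $N$ along $R(\g)$ rather than along all of $\partial N$. Since $R(\g)$ is incompressible, this double $D_{R(\g)}N$ is irreducible, and its boundary is the union of tori obtained by doubling the annuli $\g$; when $\g\ne\emptyset$ one applies Theorem~\ref{thm:apw} directly and uses $\pi_1(N)\hookrightarrow\pi_1(D_{R(\g)}N)$ together with the fact that RFRS passes to subgroups. This avoids free products altogether. Both approaches are legitimate; yours is simply a more roundabout way of establishing the same hypothesis before invoking Theorem~\ref{thm:61}.
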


The paper is organized as follows.
In Sections \ref{section:thurstonnorm} and \ref{section:complexities} we recall some standard facts about the Thurston norm and sutured manifolds.
Along the way we will also make some preparations for the proof of Proposition \ref{prop:perturbguts1}.
This proposition allows us to carefully arrange surfaces to `cut the guts' of a given surface.
This result  is the technical heart of the paper and we  give a very detailed proof of it.
In Proposition \ref{prop:cutguts} we then summarize the effect of `cutting by a surface' on the complexities of the guts of a given surface.
Finally in the last section we present our proof of Theorem \ref{thm:ag08intro}.

\subsection*{Convention.}
All manifolds are assumed to be compact and oriented. We do not assume that spaces are connected,
nonetheless, if we talk about the fundamental group of a space without specifying a base point, then we implicitly assume that the space is connected. All surfaces in a $3$-manifold are assumed to be properly embedded, unless we say explicitly otherwise.
If $N$ is a $3$-manifold and  $R\subset N$ a properly embedded surface and $a>0$,
then we denote by $R\times [-a,a]$ a  neighborhood of $R$ such that
$(R\times [-a,a]) \cap \partial N=\partial R\times [-a,a]$.
Finally, given a submanifold $S\subset M$  we denote by $\nu S$ an open tubular neighborhood around $S$.

\subsection*{Acknowledgment.} This work was supported by Grant-in-Aid for JSPS Fellows.
We wish to thank Ian Agol and Steve Boyer for helpful conversations. 
We are also grateful to the referee for many helpful comments.
Finally, we are especially indebted to Andr\'as Juh\'asz for pointing out a mistake in our first version of this paper.

\section{The Thurston norm}\label{section:thurstonnorm}

\subsection{The Thurston norm and fibered classes}

Let $S$ be a surface with connected components $S_1\cup\dots \cup S_k$.
We then refer to
\[ \chi_-(S):=\sum_{i=1}^k \max\{-\chi(S_i),0\}\]
as  the \emph{complexity} of $S$. Now let $N$ be a 3-manifold  and let $\phi\in H^1(N;\Z)$.
It is well--known that  any class in $H^1(N;\Z)$ is dual to a properly embedded surface.
The   \emph{Thurston norm of $\phi$} is defined as
\[ x_N(\phi):= \min\{\chi_-(S)\, |\, S \subset N\mbox{ properly embedded and dual to }\phi\}.\]
We will drop the subscript `$N$', when the manifold $N$ is understood.

Thurston \cite{Th86} showed that  $x$ is a seminorm on $H^1(N;\Z)$, which implies that $x$  can be extended to a seminorm on $H^1(N;\Q)$.
We denote the seminorm on $H^1(N;\Q)$ also by  $x$.
Throughout the paper we will freely go back and forth between $H^1(N;\Q)$ and $H_2(N,\partial N;\Q)$.
In particular we will consider the Thurston norm  also for classes in $H_2(N,\partial N;\Q)$.

 Thurston furthermore proved that  the Thurston norm ball
\[ B(N):=\{ \phi \in H^1(N;\Q)\, | \, x(\phi)\leq 1 \}\]
 is a (possibly non--compact) finite convex polytope.
 A \emph{Thurston cone of $N$} is defined to be either an open cone $\{rf\,|\,r>0,f\in F\}$ on a face $F$ of $B(N)$
 or a maximal connected subset of $H^1(N;\Q)\sm \{0\}$ on which $x$ vanishes.
 The Thurston cones have the following properties:
\bn
\item if $\phi,\psi$ lie in a  Thurston cone $C$, then $\phi+\psi\in C$ and given any  $r>0$ we have  $r\phi\in C$,
\item the Thurston cones are disjoint and their union  equals  $H^1(N;\Q)\sm \{0\}$,
\item the Thurston norm is additive precisely on the closures of Thurston cones, i.e. given $\phi,\psi\in H^1(N;\Q)$ we have
\[ x(\phi+\psi)=x(\phi)+x(\psi) \Leftrightarrow \mbox{\,there\,exists\,a\,Thurston\,cone\,$C$\,with\,}\phi,\psi\in \ol{C}.\]
\en

In the following we say that an integral class $\phi \in H^1(N;\Z)=\hom(\pi_1(N),\Z)$ is  \emph{fibered}  if  there exists a fibration $p\colon N\to S^1$ such that
$\phi=p_*\colon \pi_1(N)\to \Z$. We say $\phi\in H^1(N;\Q)$ is \emph{fibered} if a non-trivial integral multiple of $\phi$ is fibered.
Thurston \cite{Th86} showed that the set of fibered classes equals the union of some
 top--dimensional Thurston cones. These cones are referred to as the \emph{fibered cones} of $N$.

\subsection{Subordination}

Given  two non-zero cohomology classes $\phi,\psi\in H^1(N;\Q)$
we say \emph{$\phi$ is subordinate to $\psi$} if  $\phi\in \ol{C}$ where $C$ is the  unique Thurston cone which contains $\psi$.
We collect several properties of subordination in a lemma:

\begin{lemma}\label{lem:subordinate}
\bn
\item Subordination is transitive, i.e. if $\phi$ is subordinate to $\psi$ and $\psi$ is subordinate to $\varphi$,
then $\phi$ is subordinate to $\varphi$.
\item Given any two non-zero cohomology classes $\phi,\psi \in H^1(N;\Q)$ there exists an $m\in \N$ such that $\phi$ is subordinate to $m\phi+\psi$.
\item If  $\phi$ is subordinate to $m\phi+\psi$ for some $m$, then $\phi$ is also subordinate to $k\phi+\psi$ for any $k\geq m$.
\item  Let $p\colon \ti{N}\to N$ be a finite cover and let  $\phi,\psi\in H^1(N;\Q)$ be  two non-zero cohomology classes.
Then  $\phi$ is subordinate to $\psi$ if and only if $p^*\phi$ is subordinate to $p^*\psi$.
\en
\end{lemma}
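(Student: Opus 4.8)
The plan is to prove the four statements more or less independently, all of them flowing from the basic structure theory of Thurston cones recalled just above, in particular the three bulleted properties (closure under positive combinations, disjointness together with exhaustion of $H^1(N;\Q)\setminus\{0\}$, and the additivity characterization), plus the naturality of the Thurston norm under finite covers for part (4).

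For (1), transitivity: suppose $\phi$ lies in $\ol{C_\psi}$ where $C_\psi$ is the cone containing $\psi$, and $\psi$ lies in $\ol{C_\varphi}$ where $C_\varphi$ is the cone containing $\varphi$. Since $\psi\in C_\psi$ and $\psi\in \ol{C_\varphi}$, and cones are open, I want to conclude $C_\psi\subseteq \ol{C_\varphi}$ and hence $\ol{C_\psi}\subseteq\ol{C_\varphi}$, giving $\phi\in\ol{C_\varphi}$ as desired. The containment $C_\psi\subseteq\ol{C_\varphi}$ should follow from the additivity characterization: a point in the closure of a cone is exactly a point on which the norm is additive with respect to any interior point, so $\psi\in\ol{C_\varphi}$ means $x(\psi+\varphi)=x(\psi)+x(\varphi)$; then for any $\phi'\in C_\psi$, using that $\phi',\psi$ lie in a common cone I can run the additivity/triangle inequality along the segment from $\phi'$ through $\psi$ to $\varphi$ to deduce $x(\phi'+\varphi)=x(\phi')+x(\varphi)$, hence $\phi'\in\ol{C_\varphi}$. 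I expect this bookkeeping with the additivity condition to be the main (though still routine) obstacle, since one must be a little careful about the degenerate case where $x$ vanishes and about the fact that being "in the closure of $C_\varphi$" is equivalent to additivity with one fixed interior point of $C_\varphi$.

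For (2), given non-zero $\phi,\psi$, I want $m\in\N$ with $\phi$ subordinate to $m\phi+\psi$. The idea is that for large $m$ the class $m\phi+\psi$ is close to the ray through $\phi$: scaling, $\frac{1}{m}(m\phi+\psi)=\phi+\frac{1}{m}\psi\to\phi$ as $m\to\infty$. Since there are only finitely many Thurston cones and their closed cones cover a neighborhood structure of $H^1(N;\Q)\setminus\{0\}$, for $m$ large enough $\phi+\frac1m\psi$ lies in $\ol{C_\phi}$ — indeed $\phi$ lies in the open cone $C_\phi$ (if $\phi$ is on a face, replace $C_\phi$ by the unique cone containing it, which is either the open cone on that face or the vanishing cone; either way $\phi$ lies in its closure and a small perturbation stays in that closure once we know the perturbation lands in $\ol{C_\phi}$ rather than wandering off). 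Cleanest is: the cone $C$ containing $\phi$ has $\phi\in\ol{C}$; pick $f$ in the open cone $C$ (or in the interior structure) — actually simplest is to invoke that $\ol{C_\phi}$ contains a whole neighborhood of $\phi$ within the union of finitely many closed cones meeting $\phi$, and argue that $\phi$ can only be in those cones whose closure contains $\phi$... I would instead just say: $m\phi+\psi$ and $\phi$ satisfy $x\big((m\phi+\psi)+\phi\big)=x(m\phi+\psi)+x(\phi)$ for $m\gg0$, because $x((m+1)\phi+\psi)-x(m\phi+\psi)\to x(\phi)$ by convexity/homogeneity (the one-sided derivative of $x$ along $\phi$ at points $m\phi+\psi$ stabilizes), and then the additivity characterization gives subordination. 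This convexity argument is the technical crux of (2).

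For (3), monotonicity in $m$: if $\phi\in\ol{C}$ where $C$ is the cone of $m\phi+\psi$, I want $\phi\in\ol{C'}$ where $C'$ is the cone of $k\phi+\psi$ for $k\ge m$. Note $k\phi+\psi=(k-m)\phi+(m\phi+\psi)$; since $\phi$ is subordinate to $m\phi+\psi$, both $\phi$ and $m\phi+\psi$ lie in $\ol{C}$, hence so does their positive combination $(k-m)\phi+(m\phi+\psi)=k\phi+\psi$ by the first bulleted property (applied to the closure). So $k\phi+\psi\in\ol{C}$, meaning $C'=C$ or at least $\ol{C'}\subseteq\ol{C}$ (the cone containing $k\phi+\psi$ has its closure inside $\ol C$ by the transitivity-type argument of part (1)), and since $\phi\in\ol C=\ol{C}$... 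I actually need $\phi\in\ol{C'}$, so I should argue: $k\phi+\psi\in\ol C$ and $k\phi+\psi\in C'$ (its own open cone), so by the argument in (1), $\ol{C'}\subseteq\ol C$; but I need the reverse-ish. Better: apply part (1) with $\varphi:=k\phi+\psi$ — we have $\phi$ subordinate to $m\phi+\psi$, and I must check $m\phi+\psi$ is subordinate to $k\phi+\psi$, i.e. $m\phi+\psi\in\ol{C'}$. Since $k\phi+\psi=(k-m)\phi+(m\phi+\psi)$ and we'll also know $\phi\in\ol{C'}$... this is slightly circular, so the clean route is: $\phi$ and $m\phi+\psi$ both lie in $\ol{C}$, and $\ol C$ is closed under nonnegative rational combinations, so the whole $2$-dimensional sector they span lies in $\ol C$; in particular $k\phi+\psi$ lies in $\ol C$, and also $\phi$ lies in $\ol C$, and since $k\phi+\psi$ and $\phi$ are two classes in the common closed cone $\ol C$, by the transitivity argument each is subordinate to the other's cone — concretely $\phi$ lies in the closure of the cone of $k\phi+\psi$. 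I will just phrase this as: both $\phi,m\phi+\psi\in\ol C$ $\Rightarrow$ $x(\phi+k\phi+\psi)=x(\phi)+x(k\phi+\psi)$, which is the defining condition for $\phi$ to be subordinate to $k\phi+\psi$; establishing that equality from closure-of-cone additivity is the one step to check.

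For (4), finite covers: let $p\colon\ti N\to N$ be a degree-$d$ cover. The key input is the well-known formula $x_{\ti N}(p^*\alpha)=d\cdot x_N(\alpha)$ — or, more robustly, that $p^*$ maps each Thurston cone of $N$ into a single Thurston cone of $\ti N$, which follows from the additivity characterization together with the inequality $x_{\ti N}(p^*\alpha)\le d\, x_N(\alpha)$ (pull back a norm-minimizing surface) and the Gromov-norm/singular-norm lower bound giving equality, or at least giving: $\phi,\psi\in\ol C$ $\Rightarrow$ $x_N(\phi+\psi)=x_N(\phi)+x_N(\psi)$ $\Rightarrow$ $d\,x_N(\phi+\psi)=d\,x_N(\phi)+d\,x_N(\psi)$ $\Rightarrow$ $x_{\ti N}(p^*\phi+p^*\psi)=x_{\ti N}(p^*\phi)+x_{\ti N}(p^*\psi)$, i.e. $p^*\phi,p^*\psi$ lie in a common closed Thurston cone of $\ti N$. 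This handles the "only if". For the converse, suppose $\phi$ is not subordinate to $\psi$; then $x_N(\phi+t\psi)$ is strictly convex across the relevant wall — more precisely there is no common closed cone, so $x_N(\phi+\psi)<x_N(\phi)+x_N(\psi)$ by the additivity characterization (strict inequality when not in a common closed cone), hence $x_{\ti N}(p^*\phi+p^*\psi)=d\,x_N(\phi+\psi)<d\,x_N(\phi)+d\,x_N(\psi)=x_{\ti N}(p^*\phi)+x_{\ti N}(p^*\psi)$, so $p^*\phi$ is not subordinate to $p^*\psi$. The only genuine obstacle here is having the exact multiplicativity $x_{\ti N}(p^*\alpha)=d\,x_N(\alpha)$ available; I would cite Gabai's result (in \cite{Ga83}) that the Thurston norm equals the singular (Gromov) norm for irreducible $3$-manifolds, which is manifestly multiplicative under finite covers, or cite this multiplicativity directly as a standard fact. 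I expect part (4) to be the part requiring the most care about what background is being invoked, but no step is deep.
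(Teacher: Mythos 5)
There is a genuine gap that runs through all four parts of your proposal: you repeatedly treat the symmetric condition ``$x(\alpha+\beta)=x(\alpha)+x(\beta)$'' (equivalently, $\alpha$ and $\beta$ lie in the closure of \emph{some} common Thurston cone) as if it were the same thing as ``$\alpha$ is subordinate to $\beta$'', which is the asymmetric condition $\alpha\in\ol{C_\beta}$ with $C_\beta$ being \emph{the} Thurston cone containing $\beta$. These are not equivalent. For a model, take the $\ell^1$-seminorm on $\Q^2$, $\beta=(1,0)$, $\alpha=(1,1)$: then $x(\alpha+\beta)=3=x(\alpha)+x(\beta)$ since both lie in the closed first quadrant, but $C_\beta$ is the open positive $x$-axis, $\ol{C_\beta}$ is the nonnegative $x$-axis, and $\alpha\notin\ol{C_\beta}$. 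This breaks your argument for (4) in \emph{both} directions: in the ``only if'' direction you only conclude that $p^*\phi$ and $p^*\psi$ share some closed cone of $\ti N$, which is strictly weaker than $p^*\phi\in\ol{C_{p^*\psi}}$; and in the converse direction ``$\phi$ not subordinate to $\psi$'' does not imply that $\phi$ and $\psi$ share no closed cone, so the strict inequality $x_N(\phi+\psi)<x_N(\phi)+x_N(\psi)$ need not hold. The same slip appears at the ends of your sketches for (1) (``hence $\phi'\in\ol{C_\varphi}$''), (2) (``then the additivity characterization gives subordination''), and (3) (``which is the defining condition for $\phi$ to be subordinate to $k\phi+\psi$'').

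The paper itself offers no argument for (1)--(3) beyond calling them straightforward, and reduces (4) to Gabai's multiplicativity $x_{\ti N}(p^*\,\cdot\,)=(\deg p)\cdot x_N(\,\cdot\,)$; your identification of this as the key input for (4) is correct, but the convexity route you sketch for (2)--(3) and the additivity route for (1), (4) need to be replaced. Clean arguments that stay with the definition $\alpha\in\ol{C_\beta}$: for (1), $\ol{C_\varphi}$ is a union of Thurston cones and the cones are disjoint, so $\psi\in\ol{C_\varphi}$ forces the whole cone $C_\psi\subseteq\ol{C_\varphi}$, hence $\ol{C_\psi}\subseteq\ol{C_\varphi}$ and $\phi\in\ol{C_\varphi}$; for (2), the ray $t\mapsto\phi+t\psi$, $t\in(0,1]$, meets each (convex) cone in an interval, so for $t$ small it lies in a single cone $C$ with $\phi=\lim_{t\to 0}(\phi+t\psi)\in\ol C$, and taking $m>1/t$ gives $m\phi+\psi\in C$; for (3), an open convex cone $C$ satisfies $C+\ol C\subseteq C$, so $m\phi+\psi\in C$ and $\phi\in\ol C$ give $k\phi+\psi=(m\phi+\psi)+(k-m)\phi\in C$ for $k\ge m$, hence $\phi\in\ol C=\ol{C_{k\phi+\psi}}$; for (4), besides the scaled isometry one needs the elementary polytope fact that for a subspace $W\subseteq H^1(\ti N;\Q)$ the smallest closed face of $B(\ti N)\cap W$ through a point of $W$ equals $W$ intersected with the smallest closed face of $B(\ti N)$ through that point, so closed Thurston cones, and hence subordination, restrict correctly from $H^1(\ti N;\Q)$ to $p^*H^1(N;\Q)$.
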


The first three statements are straightforward to verify.
The last statement is an immediate consequence of the fact that given any
cover  $p\colon \ti{N}\to N$ of degree $k$ and any $\phi\in H^1(N;\Q)$
we have  $x(p^*\phi)=k\cdot x(\phi)$ (see \cite[Corollary~6.13]{Ga83}).
Put differently, $p^*\colon H^1(N;\Q)\to H^1(\ti{N};\Q)$ is up to the scaling factor $k$ an isometry of vector spaces.

\section{Complexities for  sutured manifolds}\label{section:complexities}

\subsection{Sutured manifolds}

A sutured manifold $(M,R_-,R_+,\gamma)$ consists of a $3$-manifold $M$
 together with a decomposition of its boundary
\[ \partial M=-R_-\cup \g \cup R_+\]
into oriented submanifolds where the following conditions hold:
\bn
\item $\g$ is a disjoint union of annuli.
\item $R_-$ and $R_+$ are disjoint.
\item If $A$ is a component of $\g$, then $R_-\cap A$ is a  boundary component of $A$ and  of $R_-$, and similarly for $R_+\cap A$. Furthermore, $[R_+\cap A]=[R_-\cap A]\in H_1(A;\Z)$ where we endow $R_\pm \cap A$ with the orientation coming from the boundary of $R_\pm$.
\en
(Here we give $\partial M$ the orientation such  that  $R_+$ are precisely those components of $\overline{\partial M\sm \g}$
 whose normal vectors point out of  $M$.)

We sometimes just write $(M,\gamma)$ instead of $(M,R_-,R_+,\gamma)$, but it is important to remember that $R_-$ and $R_+$ are part of the structure of a sutured manifold.

Finally a simple example of a sutured manifold is given as follows: Let $R$ be a  surface, then
 \[(R\times [-1,1],R\times -1,R\times 1,\partial R\times [-1,1])\]
  is a sutured manifold.
 We refer to such a sutured manifold as a \emph{product sutured manifold}.

\subsection{Taut sutured manifolds and Thurston norm minimizing surfaces}
A sutured manifold $(M,R_-,R_+,\gamma)$ is called  \emph{taut} if $M$ is irreducible and if $R_-$ and $R_+$
have minimal complexity among all surfaces representing $[R_-]=[R_+]\in H_2(M,\g;\Z)$.

Let $R$ be a surface in a closed $3$-manifold $N$.
 We say that $R$ is \emph{good} if $R$ has no spherical components and no component which bounds a  solid torus.
  Furthermore we say $R$ is \emph{Thurston norm minimizing} if $R$ has minimal complexity in its homology class $[R]\in H_2(N,\partial N;\Z)$.
It is clear  that  any homology class can be represented by a good Thurston norm minimizing surface.

Note that if $R$ is a good Thurston norm minimizing surface in an irreducible $3$-manifold, then a standard argument using the Loop Theorem
(see \cite[Chapter~4]{He76}) shows that $R$ is also $\pi_1$-injective.

To a  surface $R$ in a closed $3$-manifold $N$ we now associate  the sutured manifold
\[ N(R) = (N\sm R\times (-1,1),R\times 1,R\times -1,\emptyset).\]
We conclude this section with the following two observations regarding $N(R)$:
\bn
\item If $N$ is irreducible and if  $R$ is a Thurston norm minimizing surface without spherical components, then  $N(R)$ is a taut sutured manifold.
\item The surface $R$ is a fiber of a fibration $N\to S^1$ if and only if $N(R)$ is a product sutured manifold.
\en

\subsection{Sutured manifold decompositions}

We now recall the definition of a sutured manifold decomposition which also goes back to  Gabai \cite{Ga83}.
Let $(M,R_-,R_+,\g)$ be a sutured manifold.
 We say that a  properly embedded surface  $S$ is a \emph{decomposition surface} if the following condition holds:
  for any component $A$  of $\g$ every component of $A\cap S$ is either  a  non-separating arc in $A$, or
it is a closed  curve which is homologous to  $[A\cap R_-]=[A\cap R_+]\in H_1(A;\Z)$.

Let $S$ be a decomposition surface  of $(M,R_-,R_+,\g)$.  Gabai \cite[Definition~3.1]{Ga83} defines the sutured manifold decomposition
\[ (M,R_-,R_+,\g)\overset{S}{\rightsquigarrow}(M',R_-',R_+',\g')\]
where
\[ \ba{rcl} M'&=&M\sm S\times (-1,1),\\
\g'&=&(\g\cap M')\cup \ol{\nu(S'_+\cap R_-)}\cup \ol{\nu (S_-'\cap R_+)},\\
R_+'&=&((R_+\cap M')\cup S_+')\sm \int \gamma'\\
R_-'&=&((R_-\cap M')\cup S_-')\sm \int \gamma'.\ea \]
Here $S_+'$ (respectively $S_-'$) is the union of the components of $(S\times -1\cup S\times 1)\cap M'$ whose normal vector points out of (respectively into) $M'$. Furthermore, by $\nu(S'_{\pm}\cap R_{\mp})$ we mean an open tubular neighborhood of $S'_{\pm}\cap R_{\mp}$ in $\partial M$.
We say that a decomposition surface $S$ is \emph{taut} if all the components of the  sutured manifold decomposition along $S$ are taut.

We make the following  observations:
\bn
\item If $\g=\emptyset$, then any surface in $M$ is a decomposition surface for $(M,\g)$.
\item If each component of $S$ is a $\pi_1$-injective surface, then for any component of $M'$ the inclusion into $M$ induces a monomorphism of fundamental groups.
\item If $N$ is a closed $3$-manifold  and if $R\subset N$ is a closed  surface, then
$R$ is a decomposition surface for the sutured manifold $(N,\emptyset,\emptyset,\emptyset)$, and $N(R)$ is precisely the result of the decomposition along $R$.
\item If $(M,\g)$ is a sutured manifold and if $S\subset M$ is a decomposition surface which is boundary parallel,
then the resulting sutured manifolds $(M',\g')$ is a union  of product sutured manifolds and a sutured manifold $(M'_0,\g'_0)$ which
is canonically diffeomorphic to $(M,\g)$.
\en

\subsection{Guts of a sutured manifold}

Let $(M,R_-,R_+,\g)$ be a taut sutured manifold.
An \emph{admissible annulus} is an  annulus  $S$ in $M$ which does not cobound a solid cylinder in $M$ and  such that one boundary component of $S$ lies on $R_-$ and the other one lies on $R_+$. Furthermore, an \emph{admissible disk} 
is a disk $S$ in $M$  such that $S\cap R_-$ and $S\cap R_+$ consist of an interval each.

We have the following elementary but very useful lemma  (see also  \cite[Lemma~3.12]{Ga83}).

\begin{lemma}\label{lem:ga83lemma312}
Let $(M,R_-,R_+,\g)$ be a taut sutured manifold. 
Then any admissible annulus and any admissible disk is a taut decomposition surface.
\end{lemma}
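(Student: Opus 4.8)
The plan is to check the two conditions hidden in the phrase ``taut decomposition surface'': that $S$ is a decomposition surface, and that every component of the resulting $(M',R_-',R_+',\g')$ is taut. The first point only requires inspecting $S\cap\g$. If $S$ is an admissible annulus then $\partial S\subset R_-\cup R_+$; as $S$ is properly embedded and $R_\pm$ is disjoint from $\g$ we get $S\cap\g=\emptyset$, so the defining condition is vacuous. If $S$ is an admissible disk then $\partial S$ is a circle meeting $R_-$ in a single arc $\alpha_-$ and $R_+$ in a single arc $\alpha_+$, so $\partial S\sm(\alpha_-\cup\alpha_+)$ is a pair of disjoint arcs $\beta_1,\beta_2$, each properly embedded in $\g$ with one endpoint on $\partial R_-\cap\g$ and one on $\partial R_+\cap\g$. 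Thus each $\beta_i$ lies in a single annulus component $A$ of $\g$ and joins its two boundary circles, hence is a non-separating arc of $A$; since $S\cap\g=\beta_1\cup\beta_2$, every component of $A\cap S$ is a non-separating arc, and $S$ is a decomposition surface.

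For tautness, recall that a sutured manifold is taut exactly when its underlying $3$-manifold is irreducible and its $R_\pm$ have minimal complexity in their class in $H_2(\,\cdot\,;\Z)$, and I would check both for every component of $(M',R_-',R_+',\g')$. Irreducibility is the easy half: a standard innermost-disk argument shows that cutting the irreducible manifold $M$ along a properly embedded disk or annulus yields only irreducible pieces, since an essential sphere in a component of $M'$ bounds a ball in $M$, off which $S$ can be isotoped. For minimality I would first bound $\chi_-(R_\pm')$: the surface $R_\pm'$ is obtained from $R_\pm$ by cutting along the circle $\partial S\cap R_\pm$ (annulus case) or the arc $\alpha_\pm$ (disk case), attaching a copy of $S$, and deleting the interiors of the new sutures $\ol{\nu(S_+'\cap R_-)}$ and $\ol{\nu(S_-'\cap R_+)}$. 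Since cutting along a circle leaves $\chi$ unchanged, cutting along a properly embedded arc raises it by $1$, and $\chi(S)\in\{0,1\}$, an accounting of components gives $\chi_-(R_\pm')\le\chi_-(R_\pm)$. This is where the no-solid-cylinder hypothesis enters: if a boundary circle of an admissible annulus $S$ were inessential in the $R_\pm$ containing it, then, $R_-$ and $R_+$ being incompressible and $M$ irreducible, both boundary circles of $S$ would bound disks in $R_-$ and $R_+$, and these together with $S$ would cobound a solid cylinder in $M$, contrary to admissibility --- so cutting $R_\pm$ along $\partial S\cap R_\pm$ creates no new sphere or disk components.

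To finish, suppose a component of $R_+'$ (the case of $R_-'$ is identical) were not of minimal complexity, witnessed by a surface $T'$ in $M'$ with $[T']=[R_+']\in H_2(M',\g';\Z)$ and $\chi_-(T')<\chi_-(R_+')$. Undoing the decomposition --- regluing $M'$ along the copies $S_\pm'$ and resolving $T'$ against $S$ by an oriented cut-and-paste where they do not already agree --- produces a surface $T$ in $M$ with $[T]=[R_+]\in H_2(M,\g;\Z)$ and $\chi_-(T)\le\chi_-(T')$, using that $\chi_-(S)=0$ for an annulus and that the disk $S$ can be absorbed without raising $\chi_-$. Then $\chi_-(T)\le\chi_-(T')<\chi_-(R_+')\le\chi_-(R_+)=x([R_+])$ contradicts tautness of $(M,\g)$; hence $R_\pm'$ is minimal and $(M',\g')$ is taut.

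I expect the genuine obstacle to be exactly this complexity bookkeeping: making the ``undo the decomposition'' operation and the induced map $H_2(M',\g';\Z)\to H_2(M,\g;\Z)$ precise, tracking how the cut-and-reglue interacts with the new suture annuli $\ol{\nu(S_\pm'\cap R_\mp)}$, and carefully ruling out the sphere and disk components that would break the inequalities --- in particular handling the degenerate case of an inessential intersection arc $\alpha_\pm$ for an admissible disk, where one must still verify that the decomposed sutured manifold is taut.
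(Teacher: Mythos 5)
The paper itself gives no proof of this lemma; it simply points to Gabai's original Lemma~3.12 in~\cite{Ga83}, so there is no ``paper argument'' with which to compare your approach in detail. With that caveat, your reduction of the lemma to (i) the decomposition--surface condition, (ii) irreducibility of the pieces, and (iii) norm--minimality of $R_{\pm}'$ is the right skeleton, and the first two parts are handled correctly: for an annulus $S\cap\g=\emptyset$ makes the condition vacuous, for a disk the two arcs of $\partial S\cap\g$ are forced to join the two boundary circles of an annulus of $\g$ and so are non--separating, and a properly embedded disk or annulus cannot have a closed piece inside a ball in $\int M$, so the innermost--ball argument for irreducibility goes through. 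Your identification of the role of the no--solid--cylinder hypothesis in ruling out inessential boundary circles is also the right observation, although it silently uses incompressibility of $R_{\pm}$, which is a consequence of tautness that you should state and justify rather than assume.

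The genuine gap is exactly where you flag it: the ``undo the decomposition'' step. The claim that regluing $M'$ along $S_{\pm}'$ and resolving $T'$ against $S$ yields a properly embedded surface $T$ in $M$ with $[T]=[R_+]\in H_2(M,\g;\Z)$ is not established, and as written it is not even obviously the right target. The groups $H_2(M',\g';\Z)$ and $H_2(M,\g;\Z)$ are not related by a simple inclusion--induced map, because $\g'\not\subset\g$: the new sutures $\ol{\nu(S_{\pm}'\cap R_{\mp})}$ lie on $R_{\mp}$ and on $S$ when seen back in $M$, so a surface in $M'$ with boundary on $\g'$ does not, after regluing, have boundary on $\g$. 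Moreover, $R_+'$ contains a copy of $S$ (namely $S_+'$), and $R_-'$ contains the other copy, so under the identification $S_+'\sim S_-'\sim S$ the reglued $R_+'$ is naturally a representative of something like $[R_+]+[S]$, not of $[R_+]$; a compensating step (removing a copy of $S$, or arranging the resolution so that a copy of $S$ is cancelled) is needed and is not described. Until you pin down the class map precisely, prescribe the cut--and--paste that converts $T'$ into a properly embedded surface in $M$ rel $\g$, and verify that this operation keeps $\chi_-$ from growing (including in the degenerate disk case where $\alpha_\pm$ is inessential), the contradiction with tautness of $(M,\g)$ does not follow. This is the content of Gabai's Lemma~3.12, and the bookkeeping you correctly identify as the obstacle is in fact the whole proof.
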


An \emph{admissible decomposition surface} for a sutured manifold  $(M,R_-,R_+,\g)$ is a disjoint union of admissible annuli and disks in  $(M,R_-,R_+,\g)$. 
Given such an $S$ we can perform the sutured manifold decomposition
\[ (M,R_-,R_+,\g)\overset{S}{\rightsquigarrow}(M',R_-',R_+',\g').\]
We refer to any component of $M'$ which is a product sutured manifold as a \emph{window of $(M,R_-,R_+,\g)$}
and we refer to any component of $M'$ which is not a product sutured manifold  as a \emph{gut of $(M,R_-,R_+,\g)$}.
Note that the definition of window and gut depends on the choice of the admissible decomposition surface.
Nonetheless, from the context it is usually clear what admissible decomposition surface we are working with and we will therefore leave the dependence on $S$ unmentioned.

\begin{lemma}\label{lem:guts}
Let $(M,R_-,R_+,\g)$ be a sutured manifold such that  $M$ is  irreducible. We pick an admissible decomposition surface.  Then the following hold:
\bn
\item The guts and windows are $\pi_1$-injective submanifolds of $M$.
\item The fundamental group of a gut is non-trivial.
\item If  $(M,R_-,R_+,\g)$ is taut, then the windows and guts are also taut.
\en
\end{lemma}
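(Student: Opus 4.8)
The plan is to analyze each component $M_i$ of the decomposed manifold $M' = M \setminus S \times (-1,1)$ obtained from an admissible decomposition surface $S = S_1 \cup \dots \cup S_k$, where each $S_j$ is an admissible annulus or disk. First I would observe that by Lemma~\ref{lem:ga83lemma312} each $S_j$ is a taut decomposition surface, but more is needed: I want the \emph{simultaneous} decomposition along all of $S$ to be taut. Since admissible annuli and disks are in particular $\pi_1$-injective surfaces in the irreducible manifold $M$ (an annulus that does not cobound a solid cylinder is incompressible, and an admissible disk meeting both $R_\pm$ is essential), each stage of an iterated decomposition remains irreducible, and one can show inductively using Gabai's results (\cite[Lemma~3.12]{Ga83} at each stage) that decomposing along the remaining components of $S$ keeps everything taut. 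This handles part (3): the windows and guts, being the components of $M'$, are taut sutured manifolds.

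For part (1), the $\pi_1$-injectivity of the guts and windows, I would use the second observation recorded after the definition of sutured manifold decomposition in the excerpt: if each component of the decomposition surface is $\pi_1$-injective, then for any component of $M'$ the inclusion into $M$ induces a monomorphism on fundamental groups. So it suffices to reconfirm that admissible annuli and admissible disks are $\pi_1$-injective in $M$. For the annulus this is exactly the condition that it does not cobound a solid cylinder, combined with irreducibility (an essential simple closed curve on an incompressible-or-boundary-parallel annulus); for the disk, an admissible disk whose boundary meets both $R_-$ and $R_+$ cannot be boundary-parallel into $\gamma \cup R_\pm$ in a taut sutured manifold without contradicting tautness of $R_\pm$. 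A short Loop Theorem / innermost-disk argument finishes this.

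For part (2), that the fundamental group of a gut is non-trivial: a component $M_i$ with trivial fundamental group is, since $M$ is irreducible and the $M_i$ inherit irreducibility, a simply connected irreducible $3$-manifold with boundary, hence (by the Poincar\'e conjecture, or more elementarily since it has non-empty boundary, by a standard argument) a $3$-ball. But a sutured manifold structure on a $3$-ball whose sutures $\gamma'$ are annuli and whose $R'_\pm$ are the complementary pieces forces $(M_i, \gamma'_i)$ to be a product sutured manifold $D^2 \times [-1,1]$ — i.e. a window, not a gut. Concretely one checks that on $S^2 = \partial M_i$ the sutures are a disjoint union of annuli separating the two planar pieces $R'_+$ and $R'_-$, and tautness forces each of $R'_\pm$ to be a single disk, so $M_i$ is the product. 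Hence any component of $M'$ that is \emph{not} a product — i.e. a gut — must have non-trivial fundamental group.

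The main obstacle I expect is part (3), specifically justifying that the \emph{simultaneous} decomposition along the whole admissible surface $S$ is taut, rather than just each single-annulus or single-disk decomposition. One must be careful that after decomposing along one component the remaining components of $S$ are still admissible (or at least still taut decomposition surfaces) in the new sutured manifold, and that the notion of ``gut'' and ``window'' is insensitive to the order of decomposition. The cleanest route is to set up an induction on $k$, at each step invoking Lemma~\ref{lem:ga83lemma312} together with the fact that tautness of a decomposition is preserved under further taut decompositions (Gabai), and to note that the components $S_2, \dots, S_k$, being disjoint from $S_1$, descend to admissible annuli/disks in each component of $M \rightsquigarrow_{S_1} M''$; the only real checking is that an admissible annulus/disk disjoint from $S_1$ does not acquire a solid-cylinder-cobounding or boundary-parallel defect after cutting, which follows from irreducibility and the product structure of the pieces that split off.
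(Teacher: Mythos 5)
Your approach matches the paper's, which is essentially a three-clause citation of the observations already recorded in the section: part (1) follows from the remark after the decomposition definition that $\pi_1$-injectivity of the components of $S$ forces the inclusions of the pieces of $M'$ to be $\pi_1$-injective, part (2) is attributed to irreducibility of $M$ (or Poincar\'e), and part (3) is attributed to Lemma~\ref{lem:ga83lemma312}. Your unfolding of (3) into an induction over the components of $S$, checking that the remaining components stay admissible after each cut, is the right way to make the paper's terse citation precise.

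The one genuine problem is that your arguments for parts (1) and (2) repeatedly invoke tautness of $(M,R_-,R_+,\gamma)$, but tautness is hypothesized only in part (3). In (1) this is harmless but misdirected: the observation about $\pi_1$-injectivity of the pieces of $M'$ only needs each component of $S$ to be $\pi_1$-injective in $M$, and a disk is $\pi_1$-injective for trivial reasons, so the whole discussion of the admissible disk being essential or non-boundary-parallel (and the appeal to tautness of $R_\pm$ there) can be deleted. In (2) the gap is real. You assert that a simply-connected component $M_i$ of $M'$ must be the product sutured ball $D^2\times[-1,1]$, but the only justification you give for the passage from ``$\gamma'_i$ is a union of annuli on $S^2$ separating two planar pieces'' to ``each of $R'_\pm$ is a single disk'' is tautness, which is not available; without it, a ball with several parallel suture annuli is a priori a possible non-product piece and your argument does not rule it out. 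Since the paper claims part (2) follows from irreducibility alone, you should supply a tautness-free reason — for instance, note that any admissible annulus of $S$ meeting $\partial M_i$ contributes a suture annulus to $\gamma'_i$ whose core is $\pi_1$-injective in $M$ (by the same fact used in part (1)); if $\pi_1(M_i)=1$ this core would be null-homotopic in $M_i$ and hence in $M$, a contradiction, which forces the boundary configuration to be the trivial one.
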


The first statement follows from the observation that the components of an admissible decomposition surface  are $\pi_1$-injective
if $M$ is irreducible, the second statement is a consequence of the irreducibility of $M$
(or alternatively of the Poincar\'e conjecture) and the third statement is a consequence of Lemma \ref{lem:ga83lemma312}.
\medskip

We conclude this section with the  following proposition.

\begin{proposition}\label{prop:pullbackguts}
Let $(M,R_-,R_+,\g)$ be a taut sutured manifold and let $p\colon (\ti{M},\ti{\g})\to (M,\g)$ be a finite cover.
\bn
\item If $(M,\g)$ is taut, then $(\ti{M},\ti{\g})$ is also taut.
\item If  $S\subset M$ is an admissible decomposition surface,
then $p^{-1}(S)$ is an admissible decomposition surface for $\ti{M}$, and  the windows and guts of
$(\ti{M},\ti{\g})$ are precisely the preimages  of the windows and guts of $(M,\g)$.
\en
\end{proposition}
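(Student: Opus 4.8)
The plan is to handle the two statements in order, reducing each to facts that are already available in the paper. For (1), tautness of $(\ti M,\ti\g)$ means: $\ti M$ is irreducible, and $\ti R_\pm$ have minimal complexity in their homology class $[\ti R_\pm]\in H_2(\ti M,\ti\g;\Z)$. Irreducibility of $\ti M$ follows from irreducibility of $M$ by the standard fact (via the Sphere Theorem / equivariant sphere theorem, or just because $\pi_2$ is a covering invariant) that a finite cover of an irreducible $3$-manifold is irreducible, unless $M$ is covered by $S^3$ — a case one excludes since a taut sutured manifold has $R_\pm$ incompressible and $M\ne S^3$ (or one notes $M$ is aspherical in the relevant situation). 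For the complexity statement, the key input is the covering behaviour of the Thurston norm: the quoted fact (Gabai, \cite[Corollary~6.13]{Ga83}) that for a degree-$k$ cover one has $\chi_-$ scaling by $k$ on norm-minimizing representatives, equivalently $p^*$ is an isometry up to the factor $k$. Concretely: if $\ti R_+$ were not complexity-minimizing, there would be a surface $\ti S$ with $[\ti S]=[\ti R_+]$ and $\chi_-(\ti S)<\chi_-(\ti R_+)=k\cdot\chi_-(R_+)$; pushing forward (or using that the minimal complexity in $[\ti R_+]$ equals $k$ times the minimal complexity in $[R_+]$, which $R_+$ already realizes) gives a contradiction. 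One must be slightly careful that these are relative classes in $H_2(M,\g;\Z)$ rather than absolute Thurston norm, but the sutured Thurston norm has the same covering behaviour, and this is exactly the content Gabai records.

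For (2), I would first check that $p^{-1}(S)$ is an admissible decomposition surface for $(\ti M,\ti\g)$. Each component of $S$ is an admissible annulus or admissible disk; its preimage under $p$ is a disjoint union of surfaces, each of which is a connected finite cover of an annulus or a disk, hence again an annulus or a disk. One checks the boundary conditions are preserved: the preimage of $\g$ is $\ti\g$, the preimage of $R_\pm$ is $\ti R_\pm$, so each preimage component of an admissible annulus still has one boundary circle on $\ti R_-$ and the other on $\ti R_+$, and each preimage component of an admissible disk still meets $\ti R_-$ and $\ti R_+$ in an interval each. The only substantive point is the non-degeneracy clause: an admissible annulus must not cobound a solid cylinder. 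If a component $\ti A$ of $p^{-1}(S)$ cobounded a solid cylinder in $\ti M$, one has to rule this out; here I would use $\pi_1$-injectivity (Lemma \ref{lem:guts}(1), applied in $M$, together with the fact that $S$ is $\pi_1$-injective when $M$ is irreducible) and the fact that $S$ is an admissible decomposition surface downstairs — a solid cylinder upstairs would project to a singular solid cylinder showing $A$ is compressible or boundary-parallel in a way that contradicts admissibility of $S$. Then naturality of sutured manifold decomposition under pullback gives
\[ (\ti M,\ti R_-,\ti R_+,\ti\g)\overset{p^{-1}(S)}{\rightsquigarrow}(\ti M',\ti R_-',\ti R_+',\ti\g') \]
with $\ti M'=p^{-1}(M')$ and the data $\ti R_\pm',\ti\g'$ being the preimages of $R_\pm',\g'$; this is immediate from the definitions of $M'$, $\g'$, $R_\pm'$ since each operation (removing $S\times(-1,1)$, taking unions with pieces of $S_\pm'$, deleting interiors of the new sutures, taking tubular neighbourhoods) commutes with taking preimages under a covering map.

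Finally, to identify windows and guts, note that a component $\ti M_0'$ of $\ti M'$ maps to a component $M_0'$ of $M'$ as a finite cover, respecting the sutured structure. So I must show: $\ti M_0'$ is a product sutured manifold if and only if $M_0'$ is. The ``if'' direction is clear, since any cover of a product sutured manifold $R\times[-1,1]$ is again of that form ($\ti R\times[-1,1]$ for the corresponding cover $\ti R\to R$). The ``only if'' direction is where I expect the main obstacle: I need that if a finite cover of a taut sutured manifold is a product, then so was the original. By Lemma \ref{lem:guts}(3) (using part (1) just proved, so that $\ti M_0'$ is taut, hence $M_0'$ is taut) this reduces to a statement about taut sutured manifolds: a taut sutured manifold with $R_-\times[-1,1]$-covering is itself a product. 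This can be seen via the characterisation ``$(M_0',\g')$ is a product $\iff$ the sutured Floer homology / the complexity is minimal / there is no non-trivial taut decomposition'', or more elementarily: $\ti M_0'$ a product forces $\chi_-(\ti R_+')$ to equal the ``Euler characteristic lower bound'' $-\chi(\ti M_0')/\text{something}$ in a way that descends; concretely the product condition is equivalent to $H_2$ of the pair being generated by $[R_+']$ with the sutured Thurston norm behaving like a fibered class, and since $p^*$ is an isometry up to scale (the displayed fact from \cite{Ga83}) the fibered/product condition is a covering invariant. I would phrase it through the second observation at the end of Section~3.2: $R$ is a fibre of a fibration of the ambient manifold iff $N(R)$ is a product, and fibredness of a (possibly rational) class is detected after passing to finite covers by the Thurston norm, so the product property of $M_0'$ is equivalent to that of its cover $\ti M_0'$. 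Assembling these equivalences component by component yields that the product components of $\ti M'$ are exactly the preimages of the product components of $M'$, equivalently the windows of $(\ti M,\ti\g)$ are the preimages of the windows of $(M,\g)$ and likewise for guts.
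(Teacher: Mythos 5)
Your part (1) matches the paper: irreducibility of $\ti M$ via the (equivariant) sphere theorem, plus Gabai's covering behaviour of the sutured Thurston norm for complexity-minimality. The routine verification that $p^{-1}(S)$ is admissible and that decomposition commutes with pullback is fine (and is left implicit in the paper); the ``solid cylinder'' worry is resolved as you suggest, because an admissible annulus in a taut sutured manifold is $\pi_1$-injective, a covering of a $\pi_1$-injective annulus is $\pi_1$-injective, and a $\pi_1$-injective annulus cannot lie in a simply connected solid cylinder.

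The gap is exactly where you flagged it: showing that if $\ti M_0'$ is a product sutured manifold then $M_0'$ is. None of the three routes you sketch closes it. The ``complexity/SFH is minimal'' route fails because Gabai's complexity and SFH do not transfer under finite covers in any controlled way. The ``Thurston norm detects fibredness'' route is not correct: fibredness of a top-dimensional face is not read off from the norm, and the theorem that ``$p^*\phi$ fibered $\Rightarrow \phi$ fibered'' is a Stallings-type statement about finite generation of $\ker\phi$, not a norm statement; moreover translating that back to the sutured setting is not automatic. Most importantly, you never address the one case where the naive argument would actually fail: a twisted $I$-bundle $G$ over a nonorientable surface with $S_-=\partial G$ and $S_+=\emptyset$ is not a product, yet $\pi_1(S_-)$ has finite (index $2$) image in $\pi_1(G)$, so any ``index'' or ``$\pi_1$-isomorphism'' criterion must single this case out. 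The paper's proof does precisely this: by Hempel's \cite[Theorem~10.5]{He76}, a gut $G$ is either a twisted $I$-bundle over a Klein bottle with $S_-=\partial G$, $S_+=\emptyset$ (and then any finite cover still has $\wti S_+=\emptyset$, so is not a product), or $\pi_1(S_-)$ has infinite index in $\pi_1(G)$ (a property preserved by passing to finite-index subgroups). You should add this dichotomy and the two-case check to make the ``only if'' direction rigorous.
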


\begin{proof}
Let $(M,R_-,R_+,\g)$ be a taut sutured manifold
and let $p\colon (\ti{M},\ti{\g})\to (M,\g)$ be a finite cover.

We first suppose that $(M,\g)$ is taut. It follows from the Equivariant Sphere Theorem, see \cite[p.~647]{MSY82},
and  work of Gabai (e.g. by combining Corollaries 5.3 and 6.13 and Lemma~6.14 of \cite{Ga83} with
Corollary 2 of \cite{Th86}, that $(\ti{M},\ti{\g})$ is also taut.

Now let $S\subset M$ be an admissible decomposition surface.
Let $G=(G,S_-,S_+)$ be a gut of $M$ and let $\wti{G}=(\wti{G},\wti{S}_-,\wti{S}_+)$
be a component of $p^{-1}(G)$. 
We have to show that $\wti{G}=(\wti{G},\wti{S}_-,\wti{S}_+)$ is not a product sutured manifold. 
Since $G=(G,S_-,S_+)$ is not a product sutured manifold it follows from \cite[Theorem~10.5]{He76}
that precisely one of the following two cases can occur:
\bn
\item  $G$ is the twisted $I$-bundle over a Klein bottle and $S_-=\partial G$, or 
\item $\pi_1(S_-)$ has infinite index in $\pi_1(G)$.
\en
We now consider these two cases separately:
\bn
\item If $G=(G,S_-,S_+)$ is a twisted $I$-bundle over a Klein bottle with $S_-=\partial G$ and $S_+=\emptyset$, then $\wti{G}=(\wti{G},\wti{S}_-,\wti{S}_+)$ is a sutured manifold with $\wti{S}_+=\emptyset$, i.e. $\wti{G}$ is not a product sutured manifold.
\item If  $\pi_1(S_-)$ has infinite index in $\pi_1(G)$, then  $\pi_1(\wti{S}_-)$ also has infinite index in $\pi_1(\wti{G})$, which implies that $(\wti{G},\wti{S}_-,\wti{S}_+)$ is not a product sutured manifold.
\en
\end{proof}

\subsection{The double-curve sum of surfaces}
Let $N$ be a closed  $3$-manifold
and let $R$ and $F$ be two embedded surfaces which are in general position.
Note that by the standard `cut and paste' technique applied to the  intersection curves  of $R$ and $F$
we can turn the immersed surface $R\cup F$ into an  embedded surface $R\uplus F$.
 The surface $R\uplus F$  is sometimes called the double-curve sum of $R$ and $F$.
Note that $R\uplus F$  represents the same homology class as $R\cup F$ and that furthermore $R\uplus F$ has the same complexity as $R\cup F$.

Now let $R$ and $F$ be  two properly embedded surfaces  in $N$ in general position.
\bn
\item A \emph{filling ball for $(R,F)$} is an embedded ball $B\subset N$ such that $\partial B\subset R\cup F$
 as oriented surfaces.
\item A \emph{filling solid torus for $(R,F)$} is an embedded solid torus  $X\subset N$  such that $\partial X\subset R\cup F$  as oriented surfaces.
\en
(Here we view $B$ and $X$ as oriented manifolds where the orientation does not necessarily have to agree with the orientation of $N$.)
We then say that $R$ and $F$ form a \emph{good pair} if there are no filling  balls and no filling  solid tori for $(R,F)$.

We will later on make use of the following elementary  lemma:

\begin{lemma}\label{lem:doublecurvesum}
Let $N$ be a closed irreducible $3$-manifold and let $R$ and $F$ be a good pair of  embedded surfaces in $N$.
Then the following hold:
\bn
\item $R$ and $F$ are good,
\item $R\uplus F$ is good,
\item $F\cap N(R)$ is a decomposition surface for $N(R)$,
\item
there exist decomposition annuli $C_1,\dots,C_k$ which are in one-to-one correspondence with the components of $R\cap F$
such that the following diagram commutes:
\[ \xymatrix{ N \ar@{~>}[rr]^{R}\ar@{~>}[d]^{R\uplus F}  &&  N(R) \ar@{~>}[d]^{F\cap N(R)} \\
N(R\uplus F)\ar@{~>}[rr]^{C_1\cup\dots\cup C_k} && (M,\g).}\]
\en
\end{lemma}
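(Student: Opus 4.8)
The plan is to prove the four statements of Lemma \ref{lem:doublecurvesum} in order, since each builds on the previous ones.

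First, for statement (1): I would argue that if $R$ had a spherical component, then since $N$ is irreducible this sphere bounds a ball $B$ with $\partial B \subset R \cup F$ (after a small isotopy removing any intersection with $F$, or directly if the sphere component is disjoint from $F$), contradicting the good-pair hypothesis; similarly a component of $R$ bounding a solid torus would give a filling solid torus. The same argument applies to $F$. The one subtlety is making sure an isolated spherical or torus component of $R$ can be assumed disjoint from $F$ up to isotopy, or else that the relevant ball/solid torus it bounds can be taken with boundary contained in $R \cup F$; since $R$ and $F$ are in general position and $N$ is irreducible, a sphere component of $R$ bounds a ball on one side, and this ball either is a filling ball or one can push $F$ off it — I would spell this out carefully. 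So statement (1) follows from the definition of good pair.

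For statement (2): $R \uplus F$ has the same homology class and same complexity as $R \cup F$, and by construction its components are built from pieces of $R$ and $F$. If $R \uplus F$ had a spherical component, this sphere bounds a ball $B$ in the irreducible manifold $N$; I would need $\partial B \subset R \cup F$, which follows because every point of $R\uplus F$ lies on $R \cup F$ away from the double-curve resolution, and near the double curves the resolved surface still lies in an arbitrarily small neighborhood of $R \cup F$ — so after isotopy $\partial B \subset R\cup F$, giving a filling ball, a contradiction. The solid-torus case is identical, using that a component of $R\uplus F$ bounding a solid torus yields a filling solid torus for $(R,F)$. Statement (3) is then essentially definitional: since $N(R)$ has no sutures ($\g = \emptyset$), the fourth observation after the definition of sutured manifold decomposition (or rather the first observation: "If $\g=\emptyset$, then any surface in $M$ is a decomposition surface") shows that any properly embedded surface in $N(R)$ — in particular $F \cap N(R)$, which is properly embedded after a small isotopy making $F$ transverse to $R \times \{-1,1\}$ — is a decomposition surface.

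The main work, and the main obstacle, is statement (4): constructing the commuting square. The idea is that cutting $N$ first along $R$ and then along $F \cap N(R)$ should give, up to the product-sutured-manifold pieces and the bookkeeping of sutures, the same result as cutting along $R \uplus F$ and then along a collection of annuli $C_1, \dots, C_k$, one annulus for each circle of $R \cap F$. The $C_i$ arise because at each intersection circle $c$ of $R \cap F$, the double-curve resolution $R \uplus F$ "rounds the corner", and the difference between the rounded surface and the cornered pair $R \cup F$ is a small annular collar; inside $N(R\uplus F)$ this collar becomes a properly embedded annulus joining the two copies of the resolved surface. I would set this up by working in a model neighborhood $c \times D^2$ of each intersection circle, where $R$ and $F$ look like $c$ times two diameters of $D^2$; doing both decompositions explicitly in this local model and matching them up, then gluing the local pictures along the rest of $N$ where $R$ and $F$ are disjoint and the two decomposition procedures visibly agree. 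I would then check that each $C_i$ is indeed a decomposition annulus for $N(R\uplus F)$ (its boundary behaves correctly with respect to the sutures $\g'$ created by the $R \uplus F$ decomposition) and that the resulting sutured manifold $(M,\g)$ on the bottom-right agrees with the one obtained via the top-right route. The delicate points will be orienting everything consistently — the double-curve sum respects orientations, so the resolution at each circle is forced, and I must track how $S_\pm'$ pieces and the $\nu(\cdot)$ neighborhoods in Gabai's formula interact with the rounding — and confirming the one-to-one correspondence between the $C_i$ and the components of $R \cap F$ (circles only, since we are in the closed case and, via statement (3), arcs on $\g$ are not an issue here as $\g=\emptyset$ for $N(R)$).
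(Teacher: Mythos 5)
The paper states Lemma~\ref{lem:doublecurvesum} without proof (it is described as ``elementary'' and accompanied only by the schematic Figure~\ref{fig:rupluss}), so there is no proof in the text to compare against; I can only assess your proposal on its own terms. Your treatment of (1), (3) and the outline for (4) look sound. For (1) you in fact do not need to worry about pushing $F$ off the ball: the definition of a filling ball only constrains $\partial B$, not $\operatorname{int}B$, so a spherical (resp.\ solid-torus-bounding) component of $R$ immediately gives a filling ball (resp.\ solid torus). For (3) you correctly reduce to the observation that in a sutured manifold with $\gamma=\emptyset$ every properly embedded surface is a decomposition surface. Your plan for (4) --- working in a model $c\times D^2$ around each double curve and matching the two orders of decomposition --- is the right picture and, while it needs to be written out, there is no conceptual obstacle.

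There is, however, a genuine gap in your argument for (2). You claim that a spherical component $\Sigma$ of $R\uplus F$ can be isotoped so that $\partial B\subset R\cup F$, ``since $R\uplus F$ lies in an arbitrarily small neighborhood of $R\cup F$.'' This does not follow. Near a double curve $c$ of $R\cap F$, the oriented resolution replaces the cross $c\times X$ with two hyperbolic sheets. If the component $\Sigma$ happens to contain \emph{both} of these hyperbolic sheets near $c$ (which certainly occurs, e.g.\ whenever $c$ is non-separating in both $R$ and $F$ and the relevant pieces lie in the same component of $R\uplus F$), then ``unrounding'' $\Sigma$ onto $R\cup F$ pushes both sheets onto the corners meeting along $c$, producing a surface with a self-intersection circle along $c$; it is no longer an embedded sphere, hence not the boundary of a filling ball. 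Being close to $R\cup F$ does not guarantee that an isotopy keeping the surface embedded and landing it in $R\cup F$ exists. To close this gap one must handle the ``both-sheets'' situation explicitly --- for instance by a compression or innermost-region argument inside the ball $B$ bounded by $\Sigma$ --- rather than appealing to a small isotopy. The same issue affects your treatment of a component of $R\uplus F$ bounding a solid torus.
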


A schematic illustration for $R\uplus F$ and the decomposition annuli $C_i$ is given in Figure \ref{fig:rupluss}.
\begin{figure}
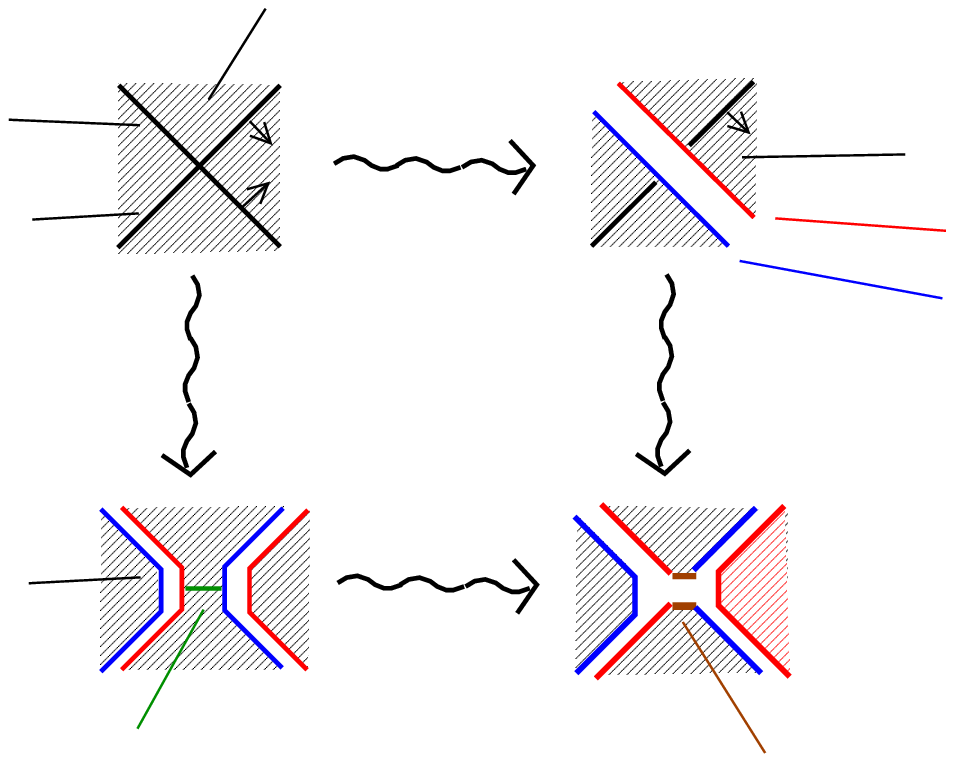
\caption{Schematic picture for decomposing along $R$ and $F$.}\label{fig:rupluss}
\end{figure}

\subsection{Complexity of sutured manifolds}

Gabai \cite[Definition~4.11]{Ga83}
associates to each connected sutured manifold $(M,R_-,R_+,\g)$ an invariant  $c(M,R_-,R_+,\g)\in \OO$
which we refer to as the \emph{complexity}
of $(M,R_-,R_+,\g)$. Here $\OO$ is a totally ordered set with the property that any strictly descending chain in $\OO$ starting at a given element is finite. We denote the  minimal element of $\OO$  by $0$. We refer to \cite[Definition~4.3]{Ga83} for details.
\footnote{Note that our notation and name differ from \cite{Ga83}: Gabai denotes this invariant $\ol{C}(M,R_-,R_+,\gamma)$ and calls it the `reduced complexity'.}

Gabai \cite[Section~4]{Ga83} proved the following theorem.

\begin{theorem}\label{thm:complexityfunction}
Let $(M,\g)$ be a connected sutured manifold and let $(M,\g)\overset{S}{\rightsquigarrow} (M',\g')$ be a   sutured manifold decomposition along
 a    connected decomposition surface $S$.
Suppose that $(M,\g)$ and $(M',\g')$ are taut.  Let $(M'_0,\g'_0)$ be a component
of $(M',\g')$. Then
\[ c(M'_0,\g'_0)\leq c(M,\g).\]
Furthermore, if $S$ is not boundary parallel, e.g. if $[S]$ is non-trivial in $H_2(M,\partial M;\Z)$, then
\[ c(M'_0,\g'_0)<c(M,\g).\]
\end{theorem}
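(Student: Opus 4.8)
The statement in question, Theorem~\ref{thm:complexityfunction}, is essentially a quotation of Gabai's work \cite{Ga83}, so the ``proof'' here should be understood as an assembly of the relevant pieces from \cite{Ga83} rather than a from-scratch argument. The plan is to reduce the statement to Gabai's Theorem~4.19 (and the surrounding Lemmas 4.12--4.18) in \cite{Ga83}, taking care of the one cosmetic discrepancy flagged in the footnote: what we call $c(M,\g)$ is Gabai's \emph{reduced complexity} $\ol{C}(M,R_-,R_+,\g)$, obtained from his complexity $C$ by a procedure that discards the contribution of product-like pieces. So first I would recall the definition of $\ol{C}$ from \cite[Definition~4.3]{Ga83} and observe that it is monotone under the passage to a single component: $\ol{C}(M'_0,\g'_0)$ only sees the ``non-product part'' of that component, and this is contained in the non-product part of $(M,\g)$ after decomposition.

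Next, the monotonicity $c(M'_0,\g'_0)\le c(M,\g)$. Here I would invoke the fact, due to Gabai, that a taut decomposition along a connected surface $S$ does not increase the reduced complexity of any resulting component; this is precisely the content of the inequality half of \cite[Theorem~4.19]{Ga83}. The input one must verify to apply it is that both $(M,\g)$ and $(M',\g')$ are taut, which is hypothesized, and that $S$ is connected, also hypothesized. One subtlety: a priori Gabai's decomposition theorem is stated for \emph{well-groomed} or otherwise ``nice'' decomposing surfaces in the hierarchy; but for the general statement about complexity one only needs that $S$ is a genuine decomposition surface in the sense defined in \cite[Definition~3.1]{Ga83}, which is the definition we are using, so no extra hypothesis is needed. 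I would spell out that if $(M',\g')$ is disconnected we simply apply the result componentwise and pick out the chosen $(M'_0,\g'_0)$.

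Then, the strict inequality when $S$ is not boundary parallel. The idea is that if $\ol{C}$ were preserved, then tracing through Gabai's definition of $\ol{C}$ one finds that the decomposition must have been along a boundary-parallel surface (equivalently, $S$ splits off only product pieces together with a canonical copy of $(M,\g)$, as recorded in the fourth bullet of the observations following the definition of sutured manifold decomposition in the excerpt). Concretely: $c(M'_0,\g'_0)=c(M,\g)$ forces, component by component, that no ``complexity-reducing'' feature was introduced, and Gabai's analysis (the equality case of \cite[Theorem~4.19]{Ga83}, cf.\ also \cite[Lemma~3.12]{Ga83} and the discussion of when decomposition is trivial) shows this happens only for a boundary-parallel $S$. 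For the parenthetical ``e.g.\ if $[S]\neq 0$ in $H_2(M,\partial M;\Z)$'', I would note that a boundary-parallel properly embedded surface is null-homologous rel boundary (it cobounds a collar with a piece of $\partial M$), so a nonzero relative homology class certifies that $S$ is not boundary parallel, and the strict inequality applies.

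The main obstacle is the bookkeeping around the reduced-versus-unreduced complexity and the precise form of Gabai's equality case: \cite{Ga83} does not isolate this exact statement, so one has to extract the monotonicity and the characterization of the equality case from the proof of the sutured manifold hierarchy theorem, and make sure that passing to an arbitrary single component $(M'_0,\g'_0)$ — rather than to the whole decomposed manifold — is legitimate. This is routine but needs care, since $\ol{C}$ is defined on connected sutured manifolds and its behavior under taking disjoint unions / selecting components must be pinned down. Once that is in place, everything reduces cleanly to citing \cite[Theorem~4.19]{Ga83} together with the observation that boundary-parallel surfaces are relatively null-homologous.
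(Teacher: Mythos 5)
Your proposal matches the paper's approach: the paper does not give its own proof of this theorem but simply attributes it to Gabai, citing \cite[Section~4]{Ga83} (with the complexity defined in \cite[Definition~4.11]{Ga83} and the ordered set in \cite[Definition~4.3]{Ga83}), and your reading of the footnote about the reduced complexity $\ol{C}$, the reduction to the equality case, and the observation that a boundary-parallel surface is null-homologous rel boundary are all consistent with this. One small addition worth noting: the remark immediately following the theorem in the paper points out that the same conclusion can be obtained more cleanly from Scharlemann \cite[Definition~4.12, Remark~4.13(b), Theorem~4.17]{Sc89}, which isolates the monotonicity and strict-decrease statements more explicitly than Gabai does, so if the ``bookkeeping obstacle'' you flag (extracting the single-component statement and the equality case from Gabai's hierarchy proof) proves annoying, the Scharlemann route sidesteps it.
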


\begin{remark}
\bn
\item We could also use the complexity $\hat{C}(M,R_-,R_+,\gamma)$ introduced by Scharlemann \cite[Definition~4.12]{Sc89}.
It follows from \cite[Definition~4.12,~Remark~4.13~(b)~and~Theorem~4.17]{Sc89} that
the conclusion of Theorem \ref{thm:complexityfunction} holds in an analogous way for Scharlemann's complexity.
\item Juh\'asz \cite{Ju06,Ju08} defines and studies in detail the `sutured Floer homology $SFH(M,\g)$' for `balanced' sutured manifolds.
The total rank of $SFH(M,\g)$ is a very useful complexity for balanced sutured manifolds and it has properties similar
to  Theorem \ref{thm:complexityfunction}. It would be interesting to give another proof of  Theorem \ref{thm:ag08}  using $SFH(M,\g)$. This though would require some adjustments since not all sutured manifolds which occur in our proof are balanced (e.g. if $(M,R_-,R_+,\g)$ is balanced, then $R_-$ and $R_+$ have no  closed components).
\en
\end{remark}

\section{Perturbations of homology classes}

The key to proving the Virtual Fibering Theorem is to show that given a
good Thurston norm minimizing surface $R$ and a homology class $\psi\in H_2(N;\Z)$
one can find a surface $F$ such that given any gut or window $X$ of $N(R)$
the intersection $F\cap X$ is a taut decomposition surface for $X$ which represents the same class as the restriction
of $\psi$ to $H_2(X,\partial X;\Z)$.

We start out with the following proposition.

\begin{proposition}\label{prop:perturbguts1}
Let $N$ be a closed irreducible connected $3$-manifold  and let $R$ be a   good Thurston norm minimizing surface.
Then for any choice of admissible decomposition surface for $N\sm R\times (-4,4)$ and any choice of  $\psi\in H_2(N;\Z)$ there exists an $m\in \N$ and a  surface $F$
with the following properties:
\bn
\item[(W1)] $[R]$ is subordinate to $m[R]+\psi$ and  $F$  represents $m[R]+\psi$,
\item[(W2)] $F\uplus (R\times -3\cup R\times 3)$ is Thurston norm minimizing,
\item[(W3)] the intersections $F\cap R\times [-4,-2]$ and $F\cap R\times [2,4]$ are product surfaces,
\item[(W4)] if $X$ is a gut or a window of $N\sm R\times (-4,4)$, then $F\cap X$ is a  decomposition surface,
\item[(W5)] $F$ and $R\times -3\,\,\cup\,\, R\times 3$ are a good pair.
\en
\end{proposition}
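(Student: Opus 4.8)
The proposition has a clear two-stage shape: first arrange the homology class correctly and control the complexity of the surface representing it, then put that surface into good geometric position with respect to the bicollar $R\times[-4,4]$ and its gut/window decomposition. I would start from the subordination statement. By Lemma~\ref{lem:subordinate}(2), there is an $m_0\in\N$ such that $[R]$ is subordinate to $m_0[R]+\psi$, and by Lemma~\ref{lem:subordinate}(3) the same holds for every $m\ge m_0$; I keep $m$ as a free parameter to be enlarged finitely often later. The point of subordination is that $x(m[R]+\psi)=m\,x([R])+x(\psi)$, i.e.\ the classes $m[R]$ and $\psi$ add Thurston-norm additively, so a Thurston norm minimizing representative of $m[R]+\psi$ can be built by superimposing $m$ parallel copies of $R$ with a minimizing representative $G$ of $\psi$ and resolving. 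Enlarging $m$ only adds more parallel copies of the (norm-minimizing, hence efficient) surface $R$, which is the mechanism that forces the intersections with the collar to become product surfaces.

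\textbf{Construction of $F$.} Put $R_i=R\times t_i$ for suitable levels $t_i$ in $[-4,-2]\cup[2,4]$, take $m$ parallel copies of $R$ distributed in the collar, isotope a good Thurston norm minimizing surface $G$ representing $\psi$ into general position with all of these, and let $F$ be obtained from the union of the $m$ copies of $R$ and $G$ by the double-curve sum (cut-and-paste) construction of Section~3.5, performed so as to eliminate innermost intersection circles and arcs greedily. Since $[R]$ is subordinate to $m[R]+\psi$, this $F$ has complexity $m\,x([R])+x(\psi)=x(m[R]+\psi)$, so $F$ is Thurston norm minimizing; this gives (W1). For (W2) one notes $F\uplus(R\times-3\cup R\times 3)$ represents $(m+2)[R]+\psi$, and again by subordination (after possibly enlarging $m$, using Lemma~\ref{lem:subordinate}(1),(3)) this class is still subordinate to $[R]$, so the double-curve sum, which has complexity $\chi_-(F)+\chi_-(R\times-3)+\chi_-(R\times 3)=x((m+2)[R]+\psi)$, is again norm minimizing. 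Properties (W3) and (W5) are arranged by an innermost-disk/outermost-arc exchange argument inside the collar: any intersection circle of $F$ with a level surface $R\times t$ that bounds a disk on one side, any filling ball, and any filling solid torus for $(F,R\times-3\cup R\times 3)$ can be removed by a complexity-non-increasing isotopy or compression (here irreducibility of $N$ and the good/norm-minimizing hypotheses on $R$ are used, exactly as in Lemma~\ref{lem:doublecurvesum}), and — crucially — when $m$ is large enough the `budget' of complexity $m\,x([R])$ forces $F\cap(R\times[-4,-2])$ and $F\cap(R\times[2,4])$ to be unions of horizontal product annuli/disks rather than anything more complicated, since any non-product piece there would let one restratify and lower $\chi_-$, contradicting norm-minimality.

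\textbf{The gut/window condition (W4).} Once $F$ meets the outer collars $R\times([-4,-2]\cup[2,4])$ in product surfaces, its intersection with the inner region $N\sm R\times(-2,2)$, hence with $N\sm R\times(-4,4)$, is a properly embedded surface, and by Lemma~\ref{lem:doublecurvesum}(3) (applied with the role of $R$ played by the appropriate level surface) $F\cap N(R)$ is a decomposition surface for the sutured manifold $N(R)$. The content of (W4) is to upgrade this to: for each gut or window $X$ of the \emph{admissible} decomposition of $N\sm R\times(-4,4)$, the piece $F\cap X$ is a decomposition surface for $X$ — i.e.\ along each suture annulus of $X$ the curves of $F\cap X$ are non-separating arcs or are homologous to the core. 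The sutures of $X$ are of two kinds: pieces coming from $\gamma$ of $N(R)$ (here there are none, since $\gamma=\emptyset$ for $N(R)$) and the annular sutures $\ol{\nu(S'_\pm\cap R_\mp)}$ created by decomposing along the admissible annuli and disks $S$. Along those, $F\cap X$ restricted to a suture annulus is isotopic into $F\cap S$ (a decomposition surface for the taut sutured manifold $X$'s neighbor by Lemma~\ref{lem:ga83lemma312}), so a further general-position and innermost-arc cleanup of $F\cap S$ — removing closed curves of $F\cap S$ bounding disks and arcs that are boundary-parallel in $S$, again at no cost in $\chi_-$ by norm-minimality and the good-pair property — makes the intersection with every suture consist only of the allowed arcs and curves.

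\textbf{Main obstacle.} I expect the real difficulty to be (W4) together with the simultaneous control of all the normalizations: one must clean up $F$ against the level surfaces, against $R\times\pm3$, and against \emph{every} admissible annulus and disk defining the guts, and do so by isotopies/exchanges that are globally compatible and do not increase $\chi_-(F)$ (so that (W1),(W2) survive) and do not reintroduce filling balls or solid tori (so that (W5) survives). Making this bookkeeping honest — in particular, verifying that choosing $m$ large enough really does force productness in the outer collars, and that the innermost-disk surgeries against the admissible surfaces can be carried out after those against the level surfaces without spoiling them — is the technical heart of the argument, which is presumably why the authors flag this proposition as the one deserving a `very detailed proof'.
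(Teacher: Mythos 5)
Your outline gets (W1) and (W2) right (subordination plus Thurston-norm additivity), and the overall shape—resolve parallel copies of $R$ with a representative of $\psi$, then clean up—is the same as the paper's. But there is a genuine gap in your treatment of (W4), and your argument for (W3) replaces the paper's straightforward device with a claim that is neither needed nor obviously correct. For (W4): you propose to clean up $F$ against the admissible annuli $A_i$ by removing disk-bounding circles and boundary-parallel arcs. That handles null-homotopic curves and separating arcs, but it does \emph{not} handle the real obstruction, namely a closed curve of $F\cap A_i$ that is essential in $A_i$ and homologous to \emph{minus} the core (a ``negative'' curve). No innermost-disk exchange, isotopy, or general-position move removes such a curve in isolation. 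The paper's key trick is to compute $d$, the maximum over $i$ of the number of negative components of $C\cap A_i$, and then replace $C$ by $D=C\uplus\bigcup_{j=1}^{d}R\times(4+j/d)$: each added parallel copy of $R$ sits \emph{outside} $R\times[-4,4]$ and meets every $A_i$ in exactly one positive curve, so $D\cap A_i$ has at least as many positive as negative closed components, and a cut-and-paste then cancels all anti-parallel pairs, leaving only arcs and positive curves. This ``flood with positive copies of $R$, then cancel'' step is the missing idea; without it your cleanup cannot reach (W4).

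For (W3), you argue that ``when $m$ is large enough the budget of complexity $m\,x([R])$ forces productness in the collar.'' The paper does not do this and does not need it: productness in $R\times[-5,5]$ is achieved at the outset by general position of the initial norm-minimizing surface $C$, and then preserved because the extra copies of $R$ and the subsequent arc-pulling isotopies are deliberately confined outside $R\times[-4,4]$. Trying to derive productness from norm-minimality for large $m$ is both unnecessary and unproven (non-product pieces in the collar need not raise $\chi_-$). Similarly, for (W5) the paper does not rely on a single exchange argument but on a two-stage descent: first a count of components of $F\cap(R\times-3\cup R\times 3)$ to kill filling balls/tori outside $R\times(-3,3)$ (their intermediate property (W5$'$)), then a coordinate-based complexity on $F\cap R\times[-2,2]$, decreased by inserting a new level $R\times x$ with $x\in(-2,2)$, to kill filling solid tori inside the collar—each stage carefully checked to preserve (W1)--(W4). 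You should adopt the $d$-copies trick for (W4), drop the ``large $m$ forces productness'' claim in favor of arranging productness by construction, and replace the one-shot exchange argument for (W5) with the paper's two complexity-descent steps.
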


In the proposition we implicitly identified a tubular neighborhood of $R$ in $N$ with $R\times [-4,4]$.
Strictly speaking we should write $R\times \{-3\}$ and $R\times \{3\}$, but in our opinion
 $F\uplus (R\times \{-3\}\cup R\times \{3\})$ is less readable than  $F\uplus (R\times -3\cup R\times 3)$.

This proposition is the technical heart of our proof of the Virtual Fibering Theorem and we therefore give a detailed proof of the proposition.
A very schematic picture for Proposition \ref{prop:perturbguts1} is given in Figure \ref{fig:cutsurface}.
\begin{figure}
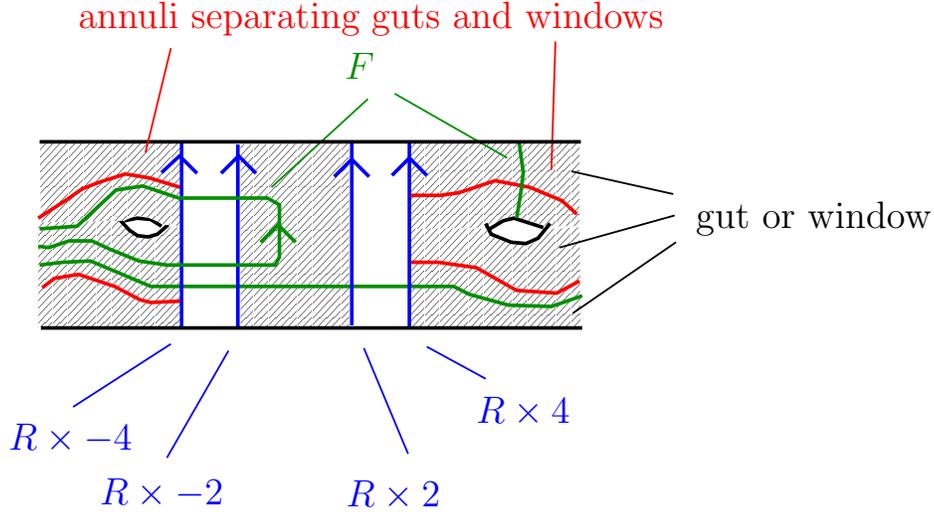
\caption{Schematic picture for Proposition \ref{prop:perturbguts1}.}\label{fig:cutsurface}
\end{figure}

\begin{proof}
Let $N$ be a closed irreducible $3$-manifold
and let $R$ be a   good Thurston norm minimizing surface. We pick a tubular neighborhood $R\times [-5,5]$ for $R$.
We write ${M}=N\sm R\times (-1,1)$ which we view as a sutured manifold $({M},{\g})$ in the usual way.

We pick an admissible decomposition surface for $M$. We  denote by $({M}_i,R_{i+},R_{i-},\g_i)$, $i=1,\dots,r$ the
corresponding guts and windows of  ${M}$.
Finally we denote by ${A}_1,\dots,{A}_s$ the collection of all the  components of the $\g_i$.
Note that we can and will assume that for each ${A}_i$ the intersection with $R\times [-5,-1]$ and  $R\times [1,5]$ consists
of a union of product annuli.

Before we state the first claim of the proof we need to introduce one more definition.
Let $S$ be a surface and  let $c$ be a component of $S\cap {A}_i$ which is a closed  curve.
We pick a $j$ such that ${A}_i$ is a component of $\g_j$, i.e. such that $A_i$ lies on $\partial {M}_j$.
Note that $c$ is a component of $\partial (S\cap {M}_j)$ and it thus inherits an orientation.
We now say that $c$ is \emph{positive} if $[c]=[R_{j\pm}\cap A_i]\in H_1({A}_i;\Z)$
and we say $c$ is \emph{negative} if $[c]=-[R_{j\pm}\cap A_i]$.
It is straightforward to see that if we chose the other  ${M}_k$ for which ${A}_i$ is a component of $\g_k$, then
 the orientation of $c$ flips and $[R_{k\pm}\cap A_i]=-[R_{j\pm}\cap A_i]$, which implies that we would get the same sign.

We can now formulate our first claim.

\begin{claim}
Let $\psi\in H_2(N;\Z)$.
There exists an $l\in \N$  and a surface $D$
with the following properties:
\bn
\item[(D1)] $[R]$ is subordinate to $l[R]+\psi$ and $D$ represents $l[R]+\psi$,
\item[(D2)] for any set of real numbers $-5<x_1<x_2<\dots < x_t<5$ the surface $D\uplus \bigcup_i R\times x_i$ is Thurston norm minimizing,
\item[(D3)] the intersection $D\cap R\times [-4,4]$ is a  product surface,
\item[(D4)] given any $i\in \{1,\dots,s\}$ the surface $D$ intersects  ${A}_i$ transversely and any component of $D\cap {A}_i $  is either  an  arc or it is a closed  curve which is positive,
\item[(D5)] $D$ is a good surface.
\en
\end{claim}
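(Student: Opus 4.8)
The plan is to build $D$ in two stages: first produce a surface representing $l[R]+\psi$ with good Thurston-norm behavior (conditions (D1)--(D3)), then modify it by isotopy and cut-and-paste with copies of $R$ to achieve the intersection control with the suture annuli ${A}_i$ (conditions (D4)--(D5)). First I would fix a good Thurston norm minimizing surface $S$ representing $\psi$, in general position with respect to $R$ and the ${A}_i$. By Lemma \ref{lem:subordinate}(2) there is an $m_0$ with $[R]$ subordinate to $m_0[R]+\psi$, and by Lemma \ref{lem:subordinate}(3) every larger multiple works too; this gives (D1) once we know how large $l$ must be chosen. The natural candidate for $D$ is a double-curve sum of $S$ with a large number $l$ of parallel copies of $R$ taken at levels inside $R\times(-5,5)$, say the copies $R\times y_1,\dots,R\times y_l$. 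Because $[R]$ and $\psi=[S]$ lie in the closure of a common Thurston cone once $l\geq m_0$ (property (3) of Thurston cones, i.e. additivity of $x$ on that closure), the double-curve sum $D:=S\uplus(R\times y_1)\uplus\cdots\uplus(R\times y_l)$ has $\chi_-(D)=l\,x([R])+x(\psi)=x(l[R]+\psi)$, so $D$ is Thurston norm minimizing. A slightly more careful version of this argument, keeping the copies of $R$ away from the levels $x_i$ and observing that adding even more parallel copies of $R$ preserves norm-minimality (again by cone additivity), gives (D2); and by arranging all the $y_j$ to lie outside $[-4,4]$, say in $(-5,-4)\cup(4,5)$, and taking $S$ to already meet $R\times[-4,4]$ in product surfaces (which can be arranged since $R$ is $\pi_1$-injective and Thurston norm minimizing, by an innermost-disk / irreducibility argument removing inessential intersection circles and then straightening the remaining essential ones), we get (D3).

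The remaining, and I expect the harder, part is (D4): arranging that $D$ meets each suture annulus ${A}_i$ only in arcs and in closed curves that are \emph{positive} in the sense just defined. Here I would work annulus by annulus. A component $c$ of $D\cap{A}_i$ that is an inessential closed curve in ${A}_i$ bounds a disk there; since $N$ is irreducible and $D$ is (or can be made) $\pi_1$-injective — note $D$ is good by the construction, and good Thurston norm minimizing surfaces are $\pi_1$-injective by the Loop Theorem argument quoted in Section \ref{section:complexities} — such a curve can be removed by an isotopy of $D$ supported near ${A}_i$ (an innermost-disk surgery, which does not increase $\chi_-$ and hence preserves norm-minimality). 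This leaves arcs and essential closed curves. An essential closed curve $c$ is homologous in ${A}_i$ to $\pm[R_{j\pm}\cap A_i]$; the ones with the wrong sign are the obstruction. To fix a negative curve $c$ I would use the copies of $R$ already present in $D$: the suture annulus ${A}_i$, being a component of $\g_j\subset\partial M_j$ for a gut or window $M_j$ of $M=N\sm R\times(-1,1)$, has its core curve homologous to a curve on $\partial(R\times[-1,1])$, i.e. to a curve parallel to a boundary component of some level surface $R\times x$. Because $R\times[-5,-1]$ and $R\times[1,5]$ meet each ${A}_i$ in product annuli, a negative intersection curve $c$ of $D$ with ${A}_i$ can be canceled against a boundary curve of one of the parallel copies $R\times y_j$ by a cut-and-paste (double-curve sum) move localized in a bicollar of ${A}_i$: concretely, one tubes $c$ to an oppositely-oriented copy running along $R\times y_j$, replacing the pair by arcs or by curves of the correct sign. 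Each such move changes $D$ within its homology class and does not increase complexity, so norm-minimality (D2) and goodness (D5) are preserved — goodness because a norm-minimizing surface in an irreducible manifold has no sphere components and no component bounding a solid torus after discarding trivial pieces, which I would verify is stable under these localized modifications.

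The main obstacle I anticipate is bookkeeping: each modification near one annulus ${A}_i$ can create new intersections with a neighboring annulus ${A}_{i'}$ sharing a boundary curve, or can reintroduce inessential curves, so the argument must be organized so that the modifications terminate. The standard device is a complexity count: order the moves by a suitable weight — e.g. the total number of intersection components $\sum_i |D\cap{A}_i|$ together with a secondary count of negative curves — and check that each move strictly decreases this weight while leaving the homology class and the Thurston complexity $\chi_-(D)$ fixed, so the process halts at a surface satisfying (D4). Throughout one must re-verify (D3), which is easy since all moves are supported in $R\times(-5,-4)\cup R\times(4,5)$ together with bicollars of the ${A}_i$, none of which meet $R\times[-4,4]$ once we push the ${A}_i\cap R\times[\pm1,\pm5]$ product annuli and the parallel copies $R\times y_j$ outside $[-4,4]$. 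Finally (D1) is recorded by taking $l$ to be whatever multiple of $[R]$ the above construction consumed, and enlarging it if necessary to exceed $m_0$, using Lemma \ref{lem:subordinate}(3) so that subordinacy of $[R]$ to $l[R]+\psi$ persists.
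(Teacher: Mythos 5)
Your overall plan mirrors the paper's, but the very first step contains a genuine error, and the treatment of (D4) is more complicated and less clearly terminating than the paper's.

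The gap is in establishing (D1)/(D2). You take $S$ Thurston norm minimizing \emph{for $\psi$} and set $D := S\uplus(R\times y_1)\uplus\cdots\uplus(R\times y_l)$, then assert $\chi_-(D)=l\,x([R])+x(\psi)=x(l[R]+\psi)$. The second equality requires $[R]$ and $\psi$ to lie in the closure of a common Thurston cone, and this is \emph{not} implied by ``$[R]$ is subordinate to $m_0[R]+\psi$''. Subordination puts $[R]$ in $\ol{C}$ where $C$ is the cone containing $m_0[R]+\psi$; it says nothing about whether $\psi$ itself lies in $\ol{C}$. (For instance, if $B(N)$ were the square with vertices $(\pm 1,0),(0,\pm 1)$ and $[R]=(1,0)$, $\psi=(-1,1)$, then $[R]$ is subordinate to $2[R]+\psi=(1,1)$ but $x(2[R]+\psi)=2\neq 4=2x([R])+x(\psi)$.) So your $D$ need not be minimizing for $l[R]+\psi$, and (D2) collapses. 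The paper avoids this by first choosing $k$ with $[R]$ subordinate to $l[R]+\psi$ for all $l\geq k$, taking $C$ Thurston norm minimizing \emph{for $k[R]+\psi$} rather than for $\psi$, and only then double-curve summing $C$ with further parallel copies of $R$: since $[R]$ is subordinate to $[C]$, cone additivity gives $x(t[R]+[C])=t\chi_-(R)+\chi_-(C)=\chi_-(C\uplus\bigcup_i R\times x_i)$, which is exactly the needed minimality.

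For (D4), the paper's device is cleaner and closes the termination question you yourself flag: set $d$ to be the maximum over $i$ of the number of negative closed components of $C\cap A_i$, and add $d$ parallel copies of $R$ at levels in $(4,5)$; each such copy meets every $A_i$ in a positive closed curve, so each $D\cap A_i$ then has at least as many positives as negatives, and removing anti-parallel pairs by cut-and-paste (each removal kills one positive and one negative) leaves only arcs and positive curves. Your curve-by-curve cancellation together with an unspecified complexity count is in the right spirit but not verified: you do not check that a move near one $A_i$ produces no new negative or inessential curves on a neighboring $A_{i'}$, and the ``tubing'' of a negative curve to a copy of $R$ is ambiguous as written (the correct operation is a cut-and-paste along an innermost anti-parallel pair, not a genus-adding tube). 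The paper's one global count makes all of this bookkeeping unnecessary.
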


We first note that by Lemma \ref{lem:subordinate} there exists a $k\in \N$ such that $[R]$ is subordinate to $l[R]+\psi$ for any $l\geq k$.
By a general position argument we can find a Thurston norm minimizing surface $C$ in $N$
which represents $k[R]+\psi$, which intersects all the annuli ${A}_i$ transversely
and such that  $C\cap R\times [-5,5]$ is a product surface.

Since $[R]$ is  subordinate to $[C]$ it follows that  $[R]$ and $[C]$ lie on the closure of a Thurston  cone, which in turn implies that
for any  $-5<x_1<x_2<\dots < x_t<5$  we have
\[ x(t[R]+[C])=x(t[R])+x([C])=t\chi_-(R)+\chi_-(C)=\chi_-(C\uplus \bigcup_{i=1}^t R\times x_i).\]
This shows that $C\uplus \bigcup_i R\times x_i$ is Thurston norm minimizing. In particular $C$  satisfies (D1) to (D3).

We now let
\[ d:=\mbox{maximal number of negative components of any $C\cap {A}_i$}\]
and we  consider
\[ D:=C\,\,\uplus\,\, \bigcup\limits_{i=1}^d R\times (4+\frac{i}{d}).\]
It follows easily from  $R_{i+}=(R\times 1)\cap M_i$, $i=1,\dots,r$ that for any ${A}_i$ there are now at least as many positive components of $D\cap {A}_i$
as there are negative components.
Using the standard `cut and paste' method we can arrange that given any ${A}_i$
the intersection $D\cap {A}_i$ contains no null-homologous closed loops and no anti-parallel closed loops.
Note that if we remove a pair of anti-parallel closed loops then we lower the number of positive and negative components each by one.
It now follows that  any component of $D\cap {A}_i$ is either  an  arc, or it is a closed  curve which  is positive.
We thus arranged that $D$ satisfies (D4).
Since all of the above operations can be performed outside of $R\times [-4,4]$ it is clear that $D$ also has Properties (D1) to (D3).

We finally turn $D$ into a good surface by  removing all components of $D$ which are spheres or which bound an compressible torus.
This concludes the proof of the claim.
\medskip

For each ${A}_i$ we now perform successively two  isotopies of $D$ in a
small neighborhood of ${A}_i$, i.e. in a neighborhood which does not intersect any of the other $A_j$:
\bn
\item We first apply an isotopy outside of $R\times [-4,4]$  which pulls the separating arcs of $D\cap {A}_i$ either into ${A}_i\cap (R\times  (-5,-1])$ or into ${A}_i\cap (R\times [1,5))$ and which leaves all the other intersections of $D$ with ${A}_i$ untouched.
\item We then apply an isotopy in $R\times [-5,-1]\cup R\times [1,5]$ which pulls the separating arcs  into ${A}_i\cap (R\times  (-2,-1])$ or into ${A}_i\cap (R\times [1,2))$ and which again leaves all the other intersections of $D$ with ${A}_i$ untouched.
\en
Note that such isotopies exist since $D\cap {A}_i$ contains no null-homologous closed loops. Also note that we can perform the isotopies
in such a way that the intersection of the resulting surface $E$ with $R\times [-4,-2]\cup R\times [2,4]$ is still a product surface.
We illustrate the two isotopies in Figure \ref{fig:simplearcs}.
\begin{figure}
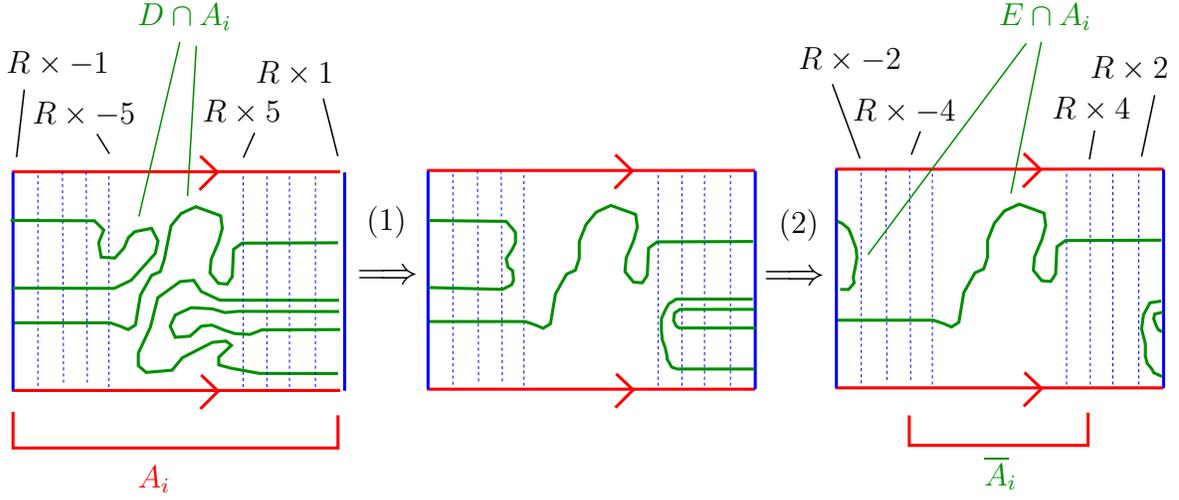
\caption{Modification of $D$ in a neighborhood of ${A}_i$.}\label{fig:simplearcs}
\end{figure}

It is now time to pause for a minute and see what we have achieved so far.

\begin{claim}
The surface $E$ has  Properties (W1) to  (W4).
\end{claim}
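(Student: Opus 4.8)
The plan is to verify the four properties essentially one at a time, tracking carefully which operations (the double-curve sums, the cut-and-paste, the removal of spheres/compressible tori, and finally the two isotopies near each $A_i$) can affect each property. The guiding principle is that $D$ already satisfies (D1)--(D5), and $E$ differs from $D$ only by isotopies supported in disjoint small neighborhoods of the annuli $A_i$, all of which take place \emph{outside} $R\times[-4,-2]\cup R\times[2,4]$ except for the part of the second isotopy that occurs in $R\times[-5,-1]\cup R\times[1,5]$ but, by construction, leaves the intersection with $R\times[-4,-2]\cup R\times[2,4]$ a product surface.

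First, (W1): since $E$ is isotopic to $D$, it represents the same homology class $l[R]+\psi$, so take $m=l$; Property (D1) says $[R]$ is subordinate to $l[R]+\psi$, giving (W1) (with $m=l$). Second, for (W3): the defining feature of the isotopies, as emphasized in the construction of $E$, is that they can be performed so that $E\cap(R\times[-4,-2]\cup R\times[2,4])$ is still a product surface; this is exactly (W3). Third, for (W2): we need $E\uplus(R\times-3\cup R\times3)$ to be Thurston norm minimizing. Here I would argue that, because $E\cap R\times[-5,5]$ restricted near the planes $R\times\pm3$ is a product surface (which follows from (W3) applied at $\pm3\in[-4,-2]\cup[2,4]$... wait, $-3\in[-4,-2]$ and $3\in[2,4]$, so indeed (W3) covers these levels), the double-curve sum $E\uplus(R\times-3\cup R\times3)$ is isotopic to $D\uplus(R\times-3\cup R\times3)$, which by (D2) (applied with $t=2$, $x_1=-3$, $x_2=3$) is Thurston norm minimizing; isotopic surfaces have the same complexity and represent the same class, so (W2) follows.

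Finally, (W4) is the main point and the step I expect to require the most care. Let $X$ be a gut or window of $M=N\sm R\times(-1,1)$; recall $X$ is one of the $M_i$, with sutures among the $A_j$. We must show $F\cap X$ (here $F=E$) is a decomposition surface for $X$, i.e. for each suture annulus $A_j$ of $X$, every component of $E\cap A_j$ is either a non-separating arc in $A_j$ or a closed curve homologous to $[R_{j\pm}\cap A_j]$. By (D4), every component of $D\cap A_j$ is an arc or a positive closed curve, and the positive closed curves are exactly those homologous to $[R_{j\pm}\cap A_j]$, so the closed-curve condition is already met by $D$ and is preserved by the isotopies (which do not create or destroy closed components, nor change their homology classes — the first isotopy pulls only the \emph{separating} arcs, the second likewise acts on arcs). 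The issue is the arcs: a priori an arc of $D\cap A_j$ could be separating in $A_j$, which is forbidden for a decomposition surface. This is precisely what the two isotopies fix: after isotopy (1) the separating arcs of $E\cap A_j$ lie in $A_j\cap(R\times(-5,-1])$ or $A_j\cap(R\times[1,5))$, and after isotopy (2) they are pushed into $A_j\cap(R\times(-2,-1])$ or $A_j\cap(R\times[1,2))$. But these portions of $A_j$ lie in $R\times[-5,-1]\cup R\times[1,5]$, which is \emph{disjoint} from $M_i$ once we restrict to the sutured manifold $M = N\sm R\times(-1,1)$ and recall that $A_j\cap(R\times[-5,-1]\cup R\times[1,5])$ is a union of product annuli lying in the collar — hence the separating arcs have been pushed off of $X$ entirely, so that inside $X$ every component of $E\cap A_j$ is a non-separating arc or a positive closed curve. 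Combined with transversality (which persists under small isotopy), this gives that $E\cap X$ is a decomposition surface for $X$, establishing (W4).

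The only subtlety I anticipate is bookkeeping about where exactly the collar $R\times[-5,-1]\cup R\times[1,5]$ sits relative to the fixed sutured manifold $M=N\sm R\times(-1,1)$ and its pieces $M_i$: one must be slightly careful that "pushing a separating arc into $A_j\cap(R\times(-2,-1])$'' really removes it from the piece $X$ rather than merely relocating it. This is handled by the standing assumption (made when the $A_i$ were chosen) that each $A_i$ meets $R\times[-5,-1]$ and $R\times[1,5]$ in product annuli, so the relevant sub-annuli are genuinely in the collar region and not in any gut or window; I would spell this out in a sentence or two. Everything else is routine: isotopy invariance of homology class and complexity, and the observation that the cut-and-paste operations used to build $D$ were already accounted for in the proof of the claim establishing (D1)--(D5).
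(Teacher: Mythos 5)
You have the right ingredients but the (W4) step as written contains a genuine error about which sutured manifold is in play. Property (W4) concerns the guts and windows of $\overline{M}:=N\setminus R\times(-4,4)$, not those of $M=N\setminus R\times(-1,1)$. You take $X=M_i$ to be a gut or window of $M$, and then assert that the separating arcs, once pushed into $A_j\cap(R\times(-2,-1])$ or $A_j\cap(R\times[1,2))$, are "disjoint from $M_i$'' because they lie in $R\times[-5,-1]\cup R\times[1,5]$. This is false: $M=N\setminus R\times(-1,1)$ contains $R\times[-5,-1]\cup R\times[1,5]$, the pieces $M_i$ tile all of $M$, and $A_j$ is a suture annulus on $\partial M_i$ — so the displaced separating arcs are still components of $E\cap A_j$ with $A_j\subset\partial M_i$, and $E\cap M_i$ is \emph{not} a decomposition surface for $M_i$. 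The isotopies merely relocate the arcs within the annulus $A_j$; they do not remove them from $M_i$.

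What saves the argument, and what the paper's proof of the claim does, is to pass to $\overline{M}=N\setminus R\times(-4,4)$: the guts and windows of $\overline{M}$ are $\overline{M}_i=M_i\cap\overline{M}$ with suture annuli $\overline{A}_j=A_j\cap\overline{M}$, and since the isotoped separating arcs live entirely in $R\times(-2,2)\subset R\times(-4,4)$, they simply do not meet $\overline{A}_j$. Thus every component of $E\cap\overline{A}_j=(E\cap A_j)\cap\overline{M}$ is a non-separating arc or a positive closed curve, which is exactly (W4). Your "subtlety'' paragraph correctly senses that something needs to be checked here, but attributes it to the product-annulus normalization of the $A_i$ rather than to the change of ambient sutured manifold, which is the actual point. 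Your treatment of (W1)--(W3) is fine and agrees with the paper (the paper simply notes (W1), (W2) are isotopy-invariant and (W3) is built into the construction).
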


We   consider the sutured manifold $\ol{M}:=N\sm R\times (-4,4)$.
Note that the guts and the windows of $\ol{M}$ are precisely the intersection of the guts and the windows of ${M}=N\sm R\times (-1,1)$ with $\ol{M}$.
In the following we write $\ol{M}_i=\ol{M}\cap {M}_i$, $\ol{\g}_i=\ol{M}\cap \g_i$ and  $\ol{A}_i=\ol{M}\cap {A}_i$ for all $i$.

We first point out that Properties (W1) and (W2) are preserved under isotopy, so they are clearly satisfied by $E$.
As we discussed above, the surface $E$ has Property (W3).

Finally let $\ol{A}_i$ be any of the annuli.
It follows from (D4) and the type of isotopy we applied that  any component of $E\cap \ol{A}_i=(E\cap A_i)\cap (N\sm R\times (-4,4))$
is either a \emph{non-separating arc} or a closed curve which is positive.
This is equivalent to saying that $E$ satisfies (W4).
This concludes the proof of the claim.
\medskip

So it now remains to modify $E$ to arrange (W5).
\footnote{Note that we could of course have picked $C$ initially such that $C$ and $R\times -3\cup R\times 3$ are a good pair,
but this property can get lost in the step from the surface $D$ to the surface $E$.}
We will do so over the next two claims.

\begin{claim}
There exists a good  surface $E$ which has Properties (W1) to (W4) and  which satisfies
\bn
\item[(W5')] There exists no filling ball and no filling solid tori for
$(E,R\times -3\cup R\times 3)$ which lies in $N\sm R\times (-3,3)$.
\en
\end{claim}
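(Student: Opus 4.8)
The plan is to take the surface $E$ produced by the previous claim and surger away any filling balls or filling solid tori for the pair $(E, R\times -3\cup R\times 3)$ that happen to sit inside $N\sm R\times (-3,3)$, while checking that none of the cut-and-paste operations destroy Properties (W1)--(W4). First I would observe that, since $N$ is irreducible, any filling ball $B$ gives a $2$-sphere $\partial B\subset E\cup (R\times -3\cup R\times 3)$ which is made of a subsurface of $E$ glued to a subsurface of $R\times \pm 3$ along arcs and circles of $E\cap (R\times \pm 3)$; replacing the $E$-part of $\partial B$ by the $(R\times\pm 3)$-part (pushed slightly off) yields a new embedded surface, isotopic rel everything outside $\nu B$ after an isotopy across $B$, which has strictly fewer filling balls and which lies in the same homology class with the same complexity. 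The same move applies to a filling solid torus $X$: here one uses that $E\uplus(R\times -3\cup R\times 3)$ is Thurston norm minimizing (Property (W2)) hence good, so a $\partial$-compressing argument together with irreducibility shows the torus $\partial X$ bounds the solid torus on the side not meeting the rest of the surface, and again we may replace the $E$-part of $\partial X$ by the complementary $(R\times\pm 3)$-part. Iterating, and noting that the number of intersection components of $E$ with $R\times -3\cup R\times 3$ is finite and does not increase, this process terminates in a surface satisfying (W5').

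The key steps, in order, are: (i) set up the bookkeeping — after a small general position perturbation, $E\cap(R\times -3\cup R\times 3)$ is a finite union of arcs and circles, and any filling ball/torus meeting $N\sm R\times(-3,3)$ has boundary decomposing along these curves; (ii) the innermost-region reduction — pick a filling ball or solid torus which is innermost (contains no other in its interior) and perform the replacement/isotopy described above, which can be realized by an ambient isotopy supported in a regular neighborhood of the filling region union a collar, hence disjoint from $R\times[-2,2]$ and from the $A_i$ up to a controlled perturbation; (iii) verify invariance of (W1)--(W4): (W1) is homological and the class is unchanged; (W2) holds because $\chi_-$ is unchanged by the replacement and, being realized by a minimizer before, stays minimal; (W3) holds because all surgeries occur in $N\sm R\times(-3,3)$, hence leave $E\cap (R\times[-2,2])$ — and with a little care also $E\cap(R\times[-4,-2])$, $E\cap(R\times[2,4])$ — as product surfaces; (W4) holds because the surgeries can be done away from the annuli $\ol A_i$, or, if an $\ol A_i$ is unavoidably crossed, the replacement only deletes pieces of $E\cap \ol A_i$ lying in $R\times\pm 3$, which by construction contains no closed components of $E\cap\ol A_i$ and no separating arcs in the relevant region; and (iv) termination, via the monotone decrease of the number of filling balls/tori (or of $|E\cap(R\times -3\cup R\times 3)|$) under the innermost move.

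I expect the main obstacle to be step (iii), and specifically the interaction of the surgeries with the annuli $\ol A_i$ and with the product structure in the collar region. A filling ball or solid torus could in principle run through one of the annuli, and one must argue that pushing the $E$-part of its boundary across to the $R\times\pm 3$-part does not introduce a null-homologous or anti-parallel closed curve, an inessential arc, or a separating arc into $E\cap\ol A_i$; the point is that the portion of $\partial B$ (or $\partial X$) lying on $E$ is being \emph{replaced} by a portion lying on $R\times\pm 3$, which lies outside $\ol M=N\sm R\times(-4,4)$ entirely, so $E\cap\ol A_i$ can only lose components, never gain bad ones. A second, more technical point is ensuring that the isotopy realizing the replacement genuinely keeps $E\cap(R\times[-4,-2]\cup R\times[2,4])$ a product surface — this will follow from choosing the filling region in general position with respect to the $R\times x$ levels and performing the across-$B$ (or across-$X$) isotopy level by level, exactly as in the construction of $E$ from $D$ above. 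Everything else is standard innermost-disk/torus surgery in an irreducible $3$-manifold and can be left to the reader with a sentence or two.
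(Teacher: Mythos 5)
Your proposal follows essentially the same route as the paper: replace the $E$-portion of the boundary of a filling ball or solid torus by the $(R\times\pm 3)$-portion pushed into $R\times(-2,2)$, observe that the added piece never meets the annuli $\ol{A}_i$ so that (W1)--(W4) survive, discard any sphere or solid-torus-bounding components to keep the surface good, and iterate while $b_0(E\cap(R\times -3\cup R\times 3))$ strictly decreases. Two small cautions: the number of filling balls or solid tori is not an obviously monotone quantity under a single surgery, so you should run the induction on your second alternative $b_0(E\cap(R\times -3\cup R\times 3))$ exactly as the paper does; and the replacement is in general \emph{not} realizable by an ambient isotopy across $B$ or $X$ (for instance the $E$-part of a filling sphere could be an annulus while the $(R\times\pm 3)$-part is two disks), though this does not derail the argument since homology, goodness, and (W1)--(W4) can all be checked directly from the cut-and-paste description, as the paper does.
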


We will prove the claim using the  complexity $b_0(E\cap (R\times -3\cup R\times 3))$.
It suffices to show that if $E$ is a good  surface with Properties (W1) to (W4) which  does not satisfy (W5'),
then there exists a  good surface with Properties (W1) to (W4) with lower complexity.

So let $E$ be a good  surface with Properties (W1) to (W4) which
admits a  filling solid torus $X$ for
$(E,R\times -3\cup R\times 3)$ which lies in $N\sm R\times (-3,3)$.
Since $E$ and $R$ are good it follows that $X$ touches $E$ and $R\times -3\cup R\times 3$.
(It is in fact straightforward
to see that $\partial X\cap (R\times -3\cup R\times 3)$ lies either completely in $R\times -3$ or in $R\times 3$.)
We now replace $E$ by
\[ (E\sm (X\cap E))\cup (X\cap (R\times -3\cup R\times 3))\]
and push the components of $X\cap (R\times -3\cup R\times 3)$ into $R\times  (-2,2)$. These two steps are illustrated
in  Figure \ref{fig:replace}.
\begin{figure}
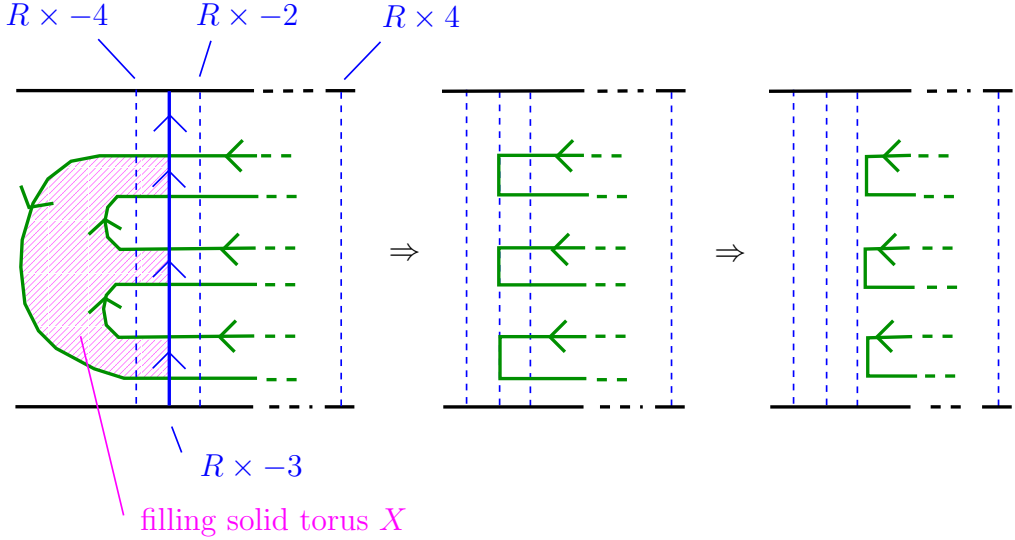
\caption{Replacing $X\cap E$ by $X\cap R\times -3$ and pushing into $R\times [-2,2]$.}
\label{fig:replace}
\end{figure}
Finally we delete all components of the new surface which are spheres or which bound solid tori.

Note that the fact that $X$ is a filling solid torus implies that the resulting surface is homologous to $E$
and in particular oriented.
Also note that any component of the intersection of the new surface with any of the $A_j$ is a component of the intersection
of $E$ with $A_j$.
It  is now straightforward to see that the resulting surface is a good  surface that still has Properties (W1) to (W4). Furthermore it is clear that the number of components of the intersection with $R\times -3\cup R\times 3$ went down.
We thus lowered the complexity.

We now suppose that $(E,R\times -3\cup R\times 3)$ admits a filling ball  $B$ which lies in $N\sm R\times (-3,3)$.
Then exactly the same argument as above, with $X$ replaced by $B$, shows that we can find a new surface of lower  complexity.
This concludes the proof of the claim.
\medskip

We now turn to the last claim of the proof of the proposition.
\begin{claim}
There exists a good  surface $F$ which has Properties (W1) to (W5).
\end{claim}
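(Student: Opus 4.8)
The plan is to continue the complexity argument from the proof of the preceding claim, using the same complexity $b_0\big(E\cap(R\times-3\cup R\times3)\big)$. It suffices to prove: if $E$ is a good surface with Properties (W1)--(W4) for which $(E,R\times-3\cup R\times3)$ is \emph{not} a good pair, then $E$ can be replaced by a good surface with Properties (W1)--(W4) and strictly smaller complexity. Since $b_0$ takes values in $\N$, iterating this reduction terminates at a good surface $F$ with Properties (W1)--(W4) for which $(F,R\times-3\cup R\times3)$ is a good pair, i.e.\ one satisfying (W1)--(W5). Note that we no longer need to carry Property (W5') along: we simply keep removing filling balls and solid tori until none are left.

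So suppose $(E,R\times-3\cup R\times3)$ admits a filling ball or filling solid torus $X$. If $X$ lies in $N\sm R\times(-3,3)$, then the move of the preceding claim --- replacing $X\cap E$ by $X\cap(R\times-3\cup R\times3)$, pushing the result into $R\times(-2,2)$ and deleting spheres and solid-torus-bounding components --- produces the required surface of strictly smaller complexity. So assume $X$ meets the open slab $R\times(-3,3)$. The first, and main, step is to arrange that such an $X$ may be taken to lie entirely in $R\times[-3,3]$. Here Property (W3) is essential: in the two collars $R\times[-4,-3]$ and $R\times[3,4]$ the surface $E$ is a union of vertical annuli, so $\partial X$ can leave $R\times[-3,3]$ only by running along these annuli. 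I would use this to straighten $X$ in the collars so that $X\cap(R\times[-4,-3]\cup R\times[3,4])$ is a union of product regions of the form $Q\times[-4,-3]$ and $Q'\times[3,4]$, and then, wherever $X$ still pokes past $R\times-4$ or $R\times4$ into $N\sm R\times(-4,4)$, to push the corresponding finger of $E$ back across $X$; this is an isotopy of $E$ supported in $N\sm R\times(-4,4)$ which preserves Properties (W1)--(W4) and the product structure on $R\times[-4,-2]\cup R\times[2,4]$. After these modifications $X$ is contained in $R\times[-3,3]$, and $X\cap E$ meets $R\times[-4,-2]\cup R\times[2,4]$ only in full vertical sub-annuli of $E$. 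Controlling in this way how a filling ball or solid torus can be entangled with the guts and windows of $N\sm R\times(-4,4)$ --- ruling this out using (W3) together with the tautness of the guts and windows from Lemma \ref{lem:guts} --- is the step I expect to be the main obstacle, and I would give a detailed argument for it.

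Once $X$ lies in $R\times[-3,3]$, I would apply the surgery of the preceding claim verbatim: replace $E$ by $(E\sm(X\cap E))\cup(X\cap(R\times-3\cup R\times3))$, push the new pieces (which are subsurfaces of $R\times\pm3$) into $R\times(-2,2)$, and delete all spherical components and components bounding solid tori, obtaining $E'$. Since $X$ is a filling ball or solid torus, $E'$ is a good surface homologous to $E$ with $\chi_-(E')\leq\chi_-(E)$, so Properties (W1) and (W2) are inherited. The surgery is supported in $R\times[-3,3]$ and the pushed-in pieces lie in $R\times(-2,2)\subset R\times(-4,4)$, so it is disjoint from the guts and windows of $N\sm R\times(-4,4)$; hence $E'\cap Y=E\cap Y$ for every gut or window $Y$, which gives Property (W4), and $E'\cap(R\times[-4,-2]\cup R\times[2,4])$ is again a union of vertical annuli, which gives Property (W3). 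Finally, exactly as in the preceding claim, every component of $E'\cap A_j$ is a component of $E\cap A_j$, and the number of components of $E'\cap(R\times-3\cup R\times3)$ is strictly smaller than that of $E\cap(R\times-3\cup R\times3)$. Thus the complexity strictly decreases, and the induction closes.
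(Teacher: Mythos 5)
Your plan diverges from the paper's in two essential ways, and the divergence hides a genuine gap. The paper proves (W5) in two stages: it first establishes the auxiliary property (W5') (no filling ball or solid torus for $(E,R\times -3\cup R\times 3)$ lying in $N\sm R\times(-3,3)$), and then, \emph{retaining} (W5') as a standing hypothesis, it passes to a second, different complexity --- the one built from the ``negative coordinates'' of the components of $E\cap R\times[-2,2]$ --- and a different modification, namely $E\uplus(R\times x)$ rather than the cut-and-replace surgery. The point of keeping (W5') is that it yields \emph{for free} that any remaining filling object $X$ lies entirely in $R\times[-3,3]$: if $X$ met the complement of the slab, then $X\sm R\times(-3,3)$ would itself produce a filling ball or solid torus outside the slab, contradicting (W5'). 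No isotopy or straightening is needed.

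Your proposal discards (W5') and instead tries to \emph{force} a straddling $X$ into $R\times[-3,3]$ by an isotopy of $E$. This is the step you yourself flag as the main obstacle, and it is where the argument breaks down. A filling solid torus with $\partial X\subset E\cup(R\times\pm 3)$ can be topologically entangled with the guts and windows of $N\sm R\times(-4,4)$ --- for instance, $X$ can run out through $R\times 3$, thread a gut, and return --- and there is no reason an isotopy supported outside $R\times(-4,4)$ can confine such an $X$ to the slab at all, let alone while preserving (W4) (the intersections of $E$ with the guts and windows must remain decomposition surfaces, and an isotopy in the complement of the slab changes exactly those intersections). Beyond this, your claim that the in-slab surgery leaves $\chi_-(E')\leq\chi_-(E)$, and hence preserves (W2), is not justified by the fact that $X$ is a ball or solid torus: one can have $\chi(\partial X\cap E)>0$ so that $\chi(\partial X\cap(R\times\pm 3))<0$ and the surgery increases complexity. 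The paper handles (W2) in the in-slab case precisely by using the $\uplus(R\times x)$ move together with (D2), which guarantees minimality; since $\uplus(R\times x)$ does not lower $b_0$, this in turn is why the paper must switch to the negative-coordinate complexity. These two devices --- (W5') and the second complexity --- are doing real work that your single-complexity, surgery-only argument does not replace.
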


Let $E$ be  a good  surface which has Properties (W1) to (W4) and (W5').
We denote by $C_-,C_+\subset R$ the collection of curves
such that $E\cap R\times \pm 2=C_\pm \times \pm 2$. We can and will assume that $C_-$ and $C_+$ are in general position.
We also write $C=C_-\cup C_+$.

We denote by $c$ the number of components of $R\sm C$. Note that the closures of the components of $R\sm C$
(equipped with the orientation coming from $R$) give naturally rise to a basis for $H_2(R,C;\Z)$.
We denote the corresponding isomorphism $H_2(R,C;\Z)\to \Z^c$ by $\Phi$ and we denote by $p\colon R\times [-2,2]\to R$ the canonical projection map.

If $S\subset R\times [-2,2]$ is a surface with $\partial S\subset C_-\times -2\cup C_+\times 2$, then
we refer to $\Phi(p_*([S,\partial S]))\in \Z^c$ as the \emph{coordinates of $S$}.

If $S\subset R\times [-2,2]$ is a surface with $\partial S\subset C_-\times -2\cup C_+\times 2$, then
 we say that $S$ is \emph{negative} if $\Phi(p_*([S,\partial S]))$ has non-positive  coordinates and at least one coordinate is negative.
Similarly we define what it means for $S$ to be positive.
Note that if $S\subset R\times [-2,2]$ is a surface with $\partial S\subset C_-\times -2$, then $S$ is isotopic rel boundary
to a surface in $R\times -2$, it follows that $S$ is either negative or positive.
The same conclusion holds for surfaces $S\subset R\times [-2,2]$  with $\partial S\subset C_+\times 2$.

Finally, given a surface $E\subset N$ with $E\cap (R\times -2\cup R\times 2)=C_-\times -2\cup C_+\times 2$
 we  consider the  complexity
\[ -\,\sum\limits_{\ba{c}S\mbox{ component}\\\mbox{of }E\cap R\times [-2,2]\ea} \mbox{sum of the negative coordinates of $\Phi(p_*([S,\partial S]))$}.\]
In order to prove the claim it suffices to show that if $E$ is a good  surface with Properties (W1) to (W4) and (W5') which  does not satisfy (W5),
then there exists a  good surface with Properties (W1) to (W4) and (W5') with lower complexity.

So let $E$ be a good  surface with Properties (W1) to (W4) and (W5')
with $E\cap (R\times -2\cup R\times 2)=C_-\times -2\cup C_+\times 2$
which
admits a  filling solid torus $X$ for
$(E,R\times -3\cup R\times 3)$.
Note that the intersection of $X$ with $N\sm R\times (-3,3)$ is either empty, or a filling ball or a filling solid torus.
By  (W5') the last two cases can not occur, we thus conclude that
the filling solid torus $X$ has to lie in $R\times [-3,3]$.

Note that the oriented surface $X\cap (R\times -3\cup R\times 3)$ has non-negative coordinates and at least one coordinate is positive.
Since $X\cap (R\times -3\cup R\times 3)$ is homologous to $-X\cap E$
it follows that the  surface $X\cap E$ has non-positive coordinates and at least one component of $X\cap E$ has a negative  coordinate.
Finally note that $X$ intersects either $R\times -3$ or $R\times 3$, without loss of generality we can assume that the former is the case.
By the above this implies that $X\cap E$ contains a negative component.

We pick  an $x\in (-2,2)$
 such that $\partial X\cap R\times x$ is isotopic in $X$ to $\partial X\cap R\times -2$.
We now consider the surface $E\uplus (R\times x)$.
Note that the coordinates of $R\times x$ are $(1,\dots,1)$. Since $R\times x$ intersects a negative component of $E$
it is now  straightforward to verify (see e.g. Figure \ref{fig:nega} for an illustration) that the surface $E\uplus (R\times x)$ has lower complexity than $E$.
\begin{figure}
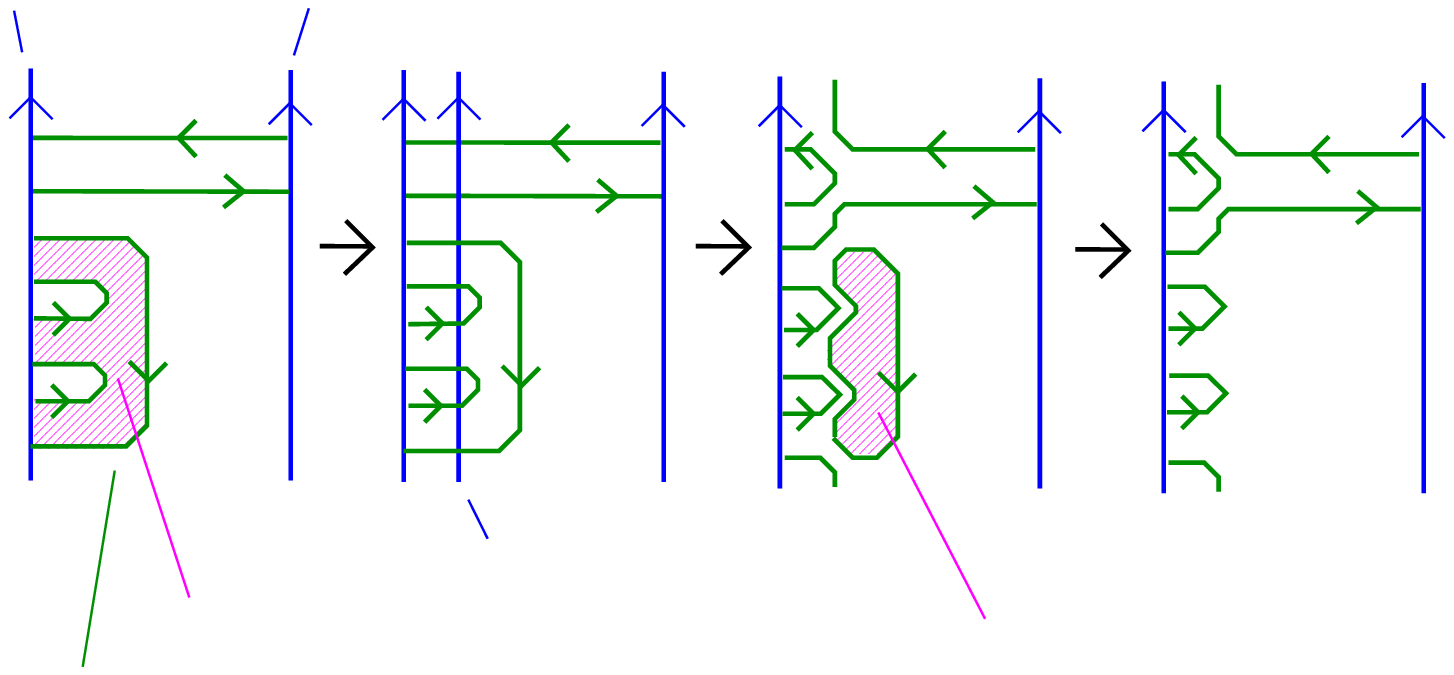
\caption{Replacing $E$ by $E\uplus (R\times x)$ and deleting any components bounding balls and solid tori.}
\label{fig:nega}
\end{figure}
We now delete all components of $E\uplus (R\times x)$ which bound balls or solid tori.
It is  easy to see, using (D2), that the resulting surface still has Properties (W1) to (W4)
and by the above it has lower complexity than $E$.

Finally, if  $(E,R\times -3\cup R\times 3)$ admits a filling ball  $B$, then exactly the same argument as above shows that we can again find a surface
 which satisfies (W1) to (W4) and (W5') and which has  lower complexity.

This concludes the proof of the claim.
\end{proof}

We will now study how the  guts are affected by decomposing along the surface which is given to us by
Proposition \ref{prop:perturbguts1}.
Before we state the next result we introduce one  more definition.
Let $N$ be a closed irreducible $3$-manifold. We say that a subset $G\subset N$
is  \emph{homologically visible in $N$} if the map $H_1(G;\Z)\to H_1(N;\Z)/\mbox{torsion}$ is non-trivial.
Otherwise we say that $G$ is \emph{invisible}.

We can now formulate the following proposition.
\begin{proposition}\label{prop:cutguts}
Let $N$ be a closed irreducible connected $3$-manifold  and let  $R\subset N$ be  a good Thurston norm minimizing surface. We pick an admissible decomposition surface for $N\sm R\times (-4,4)$. 
Suppose that $m\in \N$ and $F\subset N$ have Properties (W1) to (W5).
We put $S=(R\times -3\cup R\times 3)\uplus F$.
Then there exists an admissible decomposition surface for $N(S)$ such that to each gut $G$ of $N\sm R\times (-4,4)$ we can associate a
collection $\Phi(G)$ of guts of $N(S)$ with the following properties:
\bn
\item The guts of $N(S)$ are the disjoint union of all the $\Phi(G)$.
\item Any gut in $\Phi(G)$ is a subset of $G$.
\item If $G$ is invisible, then any gut in $\Phi(G)$ is also invisible.
\item If $G$ is a gut of $N(R)$, then one of the following two statement holds:
\bn
\item either any element in $\Phi(G)$ has lower complexity than $G$, or
\item $\Phi(G)$ consists of one element $G'$ and there exists an isotopy  of $N$ which restricts to a diffeomorphism $G\to G'$ as sutured manifolds.
\en
\item If $G$ is a gut such that $[F\cap G]\ne 0 \in H_2(G,\partial G;\Z)$, then any element in $\Phi(G)$ has lower complexity than $G$.
\en
\end{proposition}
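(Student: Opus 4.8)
The plan is to read off the guts of $N(S)$ by factoring the decomposition of $N$ along $S=(R\times -3\cup R\times 3)\uplus F$ through the double--curve sum, using Lemma~\ref{lem:doublecurvesum}. First one checks that $N(S)$ is taut: by (W1) and Lemma~\ref{lem:subordinate} the class $[S]=(m+2)[R]+\psi$ has $[R]$ subordinate to it, so $[R]$ and $[S]$ lie in the closure of one Thurston cone and the norm is additive there; combined with (W2), which says precisely that $S=F\uplus(R\times -3\cup R\times 3)$ is Thurston norm minimizing, this makes $S$ Thurston norm minimizing. As $S$ is good (Lemma~\ref{lem:doublecurvesum}(2)) and $N$ is irreducible, $N(S)$ is a taut sutured manifold, hence so are its guts and windows for any admissible decomposition surface (Lemma~\ref{lem:guts}). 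Now apply Lemma~\ref{lem:doublecurvesum} to the good pair $(R\times -3\cup R\times 3,F)$ (legitimate by (W5)): this gives decomposition annuli $C_1,\dots,C_k$, one per circle of $(R\times -3\cup R\times 3)\cap F$, and a commutative square, and one identifies $N(R\times -3\cup R\times 3)$ with the disjoint union of the product sutured manifold $R\times[-3,3]$ and a copy of $N(R)=N\setminus R\times(-1,1)$, under which the guts and windows of $\ol{M}:=N\setminus R\times(-4,4)$ are exactly those of the $N(R)$--factor.

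By (W4), $F$ restricts to a decomposition surface on every gut or window $X$ of $\ol{M}$, and on $R\times[-3,3]$, so the right column of the square decomposes $N(R\times -3\cup R\times 3)$ into $\bigl(R\times[-3,3]\ \text{cut along}\ F\bigr)\sqcup\bigsqcup_X X(F\cap X)$. I would then take as admissible decomposition surface for $N(S)$ the union of the annuli $C_i$ (which are either admissible annuli for $N(S)$ or are inessential and discarded), the admissible decomposition surface of $\ol{M}$ pushed through the product collars out to $\partial N(S)$ via the product structure of (W3), and finitely many auxiliary product annuli chopping the surviving product blocks into windows; the square then identifies the guts of $N(S)$ with the guts occurring among the pieces listed above. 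For a gut $G$ of $\ol{M}$, define $\Phi(G)$ to be the set of non--product components of $G(F\cap G)$, i.e.\ the guts of $N(S)$ lying in $G$.

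Properties (2) and (3) are then immediate: (2) holds by construction, and for (3) the inclusion $G'\subset G$ makes $H_1(G';\Z)\to H_1(N;\Z)/\mathrm{torsion}$ factor through $H_1(G;\Z)\to H_1(N;\Z)/\mathrm{torsion}$, which vanishes when $G$ is invisible. For (4) and (5) I would use the dichotomy whether or not $F\cap G$ is boundary parallel in $G$. If it is, then by the observation that decomposing along a boundary--parallel surface produces only product pieces together with one canonical copy of the original, $\Phi(G)$ consists of a single piece $G'_0\cong G$, and realising this by an ambient isotopy of $N$ supported near $G$ (which does not disturb (W1)--(W5), being away from $R\times[-4,4]$) that pushes $F\cap G$ off $\mathrm{int}\,G$ gives case (4)(b). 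If $F\cap G$ is not boundary parallel, decompose $G$ along $F\cap G$ one component at a time: the boundary--parallel components only spin off products and a canonical copy, while the first non--boundary--parallel component forces, by Theorem~\ref{thm:complexityfunction}, a strict drop of the complexity $c$ that is inherited by every subsequent piece; hence every element of $\Phi(G)$ has $c<c(G)$, which is (4)(a). For (5) one adds only that $[F\cap G]\ne 0$ in $H_2(G,\partial G;\Z)$ forces some component of $F\cap G$ to be non--boundary parallel (a boundary--parallel surface is null--homologous rel boundary), so the same argument applies.

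The heart of the matter — and the step I expect to be the main obstacle — is the part of (1) asserting that no gut of $N(S)$ escapes the guts $G$ of $\ol{M}$: that $R\times[-3,3]$ and each window $W$ of $\ol{M}$, cut along the relevant part of $F$ and the auxiliary annuli, decompose into product sutured manifolds only. By (W3) this reduces inside $R\times[-3,3]$ to the slab $R\times[-2,2]$, over which $F$ is not controlled by (W3); here one must use tautness of $N(S)$ together with (W5) and the detailed arrangement of $F$ over $R\times[-2,2]$ from the proof of Proposition~\ref{prop:perturbguts1} to conclude that $F$ meets $R\times[-2,2]$, and each window $W$, in a Thurston norm minimizing surface of the ambient product sutured manifold (which is $\pi_1$--injective in $N$, with $S$ norm minimizing), and that a Thurston norm minimizing surface cuts a product sutured manifold into products. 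Organising this carefully, together with checking that the chosen surface is genuinely admissible and that the sutured structures match across the commutative square, is where the real work lies.
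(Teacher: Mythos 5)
Your proposal follows the paper's route essentially exactly: factor the decomposition of $N$ along $S$ through the double-curve sum via Lemma~\ref{lem:doublecurvesum}, identify $N(R\times -3\cup R\times 3)$ with the product slab $Q$ together with $N\setminus R\times(-4,4)$, decompose each gut, window and $Q$ along $F$ (legitimate by (W4)), take the annuli $C$ from Lemma~\ref{lem:doublecurvesum} together with $A\cap X$ as the admissible decomposition surface for $N(S)$, define $\Phi(G)$ to be the non-product pieces of $G$ cut along $F\cap G$, and settle (4) and (5) with Theorem~\ref{thm:complexityfunction}; (2) and (3) are immediate as you say.

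One comment on the step you single out as the ``heart of the matter'': it is easier than you make it sound, and in particular it does not require re-opening the proof of Proposition~\ref{prop:perturbguts1} or any further control of $F$ over $R\times[-2,2]$. Once (W2), (W5) and Lemma~\ref{lem:doublecurvesum} give that $N(S)$ is taut, Lemma~\ref{lem:ga83lemma312} says that decomposing the taut $N(S)$ along the admissible surface $B=C\cup(A\cap X)$ produces taut pieces. By commutativity of the diagram these pieces are precisely $Q^F$, the $G_i^F$ and the $P_i^F$; in particular $Q^F$ and each $P_i^F$ is taut, so $F\cap Q$ and $F\cap P_i$ are taut decomposition surfaces of the product sutured manifolds $Q$ and $P_i$, and the standard fact that a taut decomposition of a product sutured manifold yields a union of products (via the classification of norm-minimizing surfaces in $\Sigma\times I$) finishes the job. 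That is the entire content of the step; tautness of $N(S)$ is the only input, and (W5) enters only through Lemma~\ref{lem:doublecurvesum} to secure that tautness.
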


\begin{proof}
We consider the sutured manifold  $M=N\sm R\times (-4,4)$. We pick an admissible decomposition surface $A$ for $M$.
Recall that we assumed that $N$ is closed, which implies that the sutured manifold $M$ has no sutures, which in turn implies that $A$ consists only of admissible annuli. We  denote by $G_1,\dots,G_k$ the corresponding guts and by $P_1,\dots,P_l$ the corresponding windows of $M$.
By (W4)   we can decompose $G_1,\dots,G_k$ and $P_1,\dots,P_l$ along $F$ and we obtain new
 sutured manifolds $G_1^F,\dots,G_k^F$ and $P_1^F,\dots,P_l^F$.

We  also consider the product sutured manifold $Q:=R\times [-2,2]$. We can  decompose $Q$ along $F\cap Q$ and we obtain a sutured manifold $Q^F$.
Note that we can and will identify $N(R\times -3\cup R\times 3)$ with $Q\cup M$.

Finally we put $S=(R\times -3\cup R\times 3)\uplus F$ and we consider the following
diagram
\[ \xymatrix{ N \ar@{~>}[rrr]^{R\times -3\cup R\times 3}\ar@{~>}[d]^{S}  &&&  Q\cup M\ar@{~>}[d]^{F}  \ar@{~>}[rr]^-{A}&&
Q \cup \bigcup\limits_iG_i\cup \bigcup\limits_i P_i \ar@{~>}[d]^{F}\\
N(S)\ar@{~>}[rrr]^{C} &&& X\ar@{~>}[rr]^-{A\cap X}&&Q^F\cup \bigcup\limits_i G_i^F\cup \bigcup\limits_i P_i^F.}\]
We now make several explanations and observations:
\bn
\item The decompositions along $F$ are  understood to be  along the intersection of $F$ with the given submanifold of $N$.
\item It follows from (W2) and (W5) and from Lemma \ref{lem:doublecurvesum} that $N(S)$ is taut.
\item By $C$ we denote the union of the decomposition annuli  from Lemma \ref{lem:doublecurvesum} which correspond to the  components of
$F\cap (R\times -3\cup R\times 3)$.
\item It follows from Lemma \ref{lem:doublecurvesum} that the first square of the diagram is commutative. It is straightforward to verify that the second square is also commutative.
\item  It follows from (W4) that the components of $C$ and $A\cap X$ are admissible annuli and admissible disks.
\en
We now let $B=C\cup (A\cap X)$. It follows from the above that $B$ is an admissible decomposition surface for 
$N(S)$. 
It is well-known that if we decompose a  product sutured manifold along a  taut decomposition surface,
then the result  is also a product sutured manifold.
(This can be seen for example by the classification of Thurston norm minimizing surfaces in $S^1\times \Sigma$.)
We thus see that the guts of $N(S)$ with respect to $B$ are precisely the disjoint union of the 
non-product components of the $G_i^F$.

To each gut $G_i$ of $N(R)$ we now associate
\[ \Phi(G_i):=\mbox{non-product components of $G_i^F$}.\]
By the above the guts of $N(S)$ are the disjoint union of $\{\Phi(G_i)\}_{i=1,\dots,k}$.
By construction any $J\in \Phi(G_i)$ is a subset of $G_i$. In particular the  map $H_1(J;\Z)\to H_1(N;\Z)$
factors through $H_1(G_i;\Z)\to H_1(N;\Z)$. It follows  that if $G_i$ is invisible, then any component of $G_i^F$ is invisible as well.
It furthermore follows immediately from Theorem \ref{thm:complexityfunction}, applied iteratively to the components of $F\cap G_i$, that the fourth and the fifth statement also hold.
\end{proof}

\section{The proof of the Virtual Fibering Theorem}\label{section:ag08}

For the reader's convenience we recall Agol's theorem.

\begin{theorem}\label{thm:ag08}\textbf{\emph{(Agol)}}
Let $N$ be an irreducible connected  $3$--manifold with empty or toroidal boundary such that $\pi_1(N)$ is  virtually RFRS.
Let $\phi\in H^1(N;\Q)$ be non--trivial. Then  there exists a finite cover $q\colon \ti{N}\to N$ such that  $q^*\phi$ is subordinate to a fibered class.
\end{theorem}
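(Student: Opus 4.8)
The strategy is to reduce the general statement to the closed case already set up in the previous section, and then to run an induction on a complexity measure of the guts, using Proposition~\ref{prop:cutguts} as the engine. First I would dispose of the boundary: since $N$ has empty or toroidal boundary and $\pi_1(N)$ is virtually RFRS, I would pass to a finite cover (using Lemma~\ref{lem:subordinate}(4), which says subordination is preserved and reflected under finite covers) in which the RFRS filtration is honest, and then Dehn-fill or double along the toroidal boundary components in a way compatible with $\phi$ so as to work inside a \emph{closed} irreducible $3$-manifold; here one needs to be a little careful that the filling can be chosen so that $\phi$ still has a Thurston norm minimizing dual surface whose relevant guts are controlled, but this is a standard manoeuvre. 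So from now on assume $N$ is closed, and represent $\phi$ (after clearing denominators) by a good Thurston norm minimizing surface $R$, forming the sutured manifold $N(R)$, which is taut by the observations in Section~\ref{section:complexities}.

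The key point is that if $N(R)$ is already a product sutured manifold then $\phi$ is itself fibered and we are done, so the induction should decrease some measure of ``how far $N(R)$ is from a product.'' I would pick an admissible decomposition surface for $N\sm R\times(-4,4)$ and use the multiset of complexities $\{c(G)\}$ of the guts $G$ of $N(R)$, ordered in the well-ordered set $\OO$ of Gabai (so strictly descending chains terminate), as the induction quantity. The inductive step runs as follows: pick a class $\psi\in H_2(N;\Z)$ which is ``homologically visible'' on every gut that is not invisible --- such a $\psi$ exists provided the RFRS condition has been used to arrange, in a suitable further finite cover, that $H_1(N;\Z)/\text{torsion}$ is large enough to detect each gut's first homology; this is exactly where the hypothesis that $\pi_1(N)$ is (virtually) RFRS does its work. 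Apply Proposition~\ref{prop:perturbguts1} to $R$ and $\psi$ to get $m\in\N$ and a surface $F$ with properties (W1)--(W5), set $S=(R\times -3\cup R\times 3)\uplus F$, and apply Proposition~\ref{prop:cutguts}. By (W1), $[R]$ is subordinate to $m[R]+\psi=[S]$, and by transitivity of subordination (Lemma~\ref{lem:subordinate}(1)) it suffices to reach a fibered class starting from $[S]$. By Proposition~\ref{prop:cutguts}, every visible gut $G$ of $N(R)$ has $[F\cap G]\ne 0$ (by our choice of $\psi$), hence each gut of $N(S)$ assigned to it has strictly smaller complexity; and every invisible gut $G$ either produces guts of strictly smaller complexity or is carried isotopically to a unique gut $G'$ of the same complexity. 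To handle the invisible guts that persist, I would appeal to the RFRS filtration again: in a deeper finite cover, an invisible gut becomes visible (this is essentially the defining feature of RFRS, that eventually every nontrivial homology class survives to a finite-index subgroup after passing through $H_1(\cdot;\Z)/\text{torsion}$), so after finitely many steps every gut has been made visible and then killed. Thus the multiset of gut-complexities strictly decreases in the finite order on $\OO$-valued multisets, and the induction terminates with $N(S')=$ a product, i.e. $q^*\phi$ subordinate to a fibered class in the appropriate cover.

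\textbf{Main obstacle.} The technical heart is not the decomposition bookkeeping --- that is packaged in Propositions~\ref{prop:perturbguts1} and~\ref{prop:cutguts} --- but rather the interface between the RFRS filtration and the guts: one must show that, given the finite collection of nontrivial classes in $H_1(G;\Z)$ for the guts $G$, one can choose a single finite cover $\ti N\to N$ coming from the RFRS filtration in which all the images of these classes in $H_1(\ti N;\Z)/\text{torsion}$ are simultaneously nonzero, so that a single $\psi\in H_2(\ti N;\Z)$ sees every gut at once. This requires knowing that the guts are $\pi_1$-injective (Lemma~\ref{lem:guts}(1)) so their fundamental groups inject, that passing to the cover pulls back guts to guts (Proposition~\ref{prop:pullbackguts}), and a careful choice of which term $\pi_k$ of the RFRS filtration to use, balancing the finitely many constraints. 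The other delicate point is ensuring the induction is genuinely well-founded: one must verify that ``the multiset of complexities strictly decreases in $\OO$ under the lexicographic/multiset order'' really does prevent infinite descent, which follows from the fact that strictly descending chains in $\OO$ starting at a fixed element are finite, but needs the observation that at each stage the total number of guts stays bounded (Proposition~\ref{prop:cutguts}(1)--(2)). Once these are in hand, the conclusion that $q^*\phi$ is the limit of fibered classes, and the passage back to the original $3$-manifold with toroidal boundary, are routine.
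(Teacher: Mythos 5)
Your overall architecture mirrors the paper's, but there are two substantive gaps, one of which would break the induction.

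\textbf{The induction quantity is not monotone under covers.} You propose to induct on the multiset of gut-complexities (or the number of guts), with the justification that ``the total number of guts stays bounded (Proposition~\ref{prop:cutguts}(1)--(2)).'' That proposition controls what happens when you decompose along a surface \emph{within a fixed manifold}; it says nothing about passing to a finite cover. In the step where all remaining guts are invisible, you pass to the cover $N_{k+1}\to N_k$ from the RFRS filtration. An invisible gut $G$ has $\pi_1(G)\subset\pi_{k+1}$, so its preimage consists of $[\pi_k:\pi_{k+1}]$ disjoint diffeomorphic copies of $G$, each a gut of the same complexity. Thus both the number of guts and the multiset of complexities strictly \emph{increase}, and your proposed order does not decrease. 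The paper's fix is to count \emph{equivalence classes} of guts under deck transformations of $N_k\to N$ composed with isotopies: since the cover is regular, all the lifted copies of $G$ fall into a single equivalence class, so the equivalence-class count is preserved under covers, while a separately tracked ``invisibility index'' strictly decreases. That quintuple $(m_c,n_c,i,m_v,n_v)$, lexicographically ordered, is what makes the descent well-founded; the plain multiset does not.

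\textbf{The boundary reduction is not Dehn filling.} Dehn filling a toroidal boundary component in general changes $\pi_1$ drastically (it is a quotient, not a subgroup) and destroys the virtual RFRS hypothesis; there is no obvious way to ``fill compatibly with $\phi$'' that preserves the needed group theory. The paper instead \emph{doubles} $N$ along $\partial N$ to get a closed manifold $W=N\cup_{\partial} N'$ with a retraction $r\colon W\to N$, takes $R=\partial N\subset W$ (a closed, norm-zero surface) and $\Phi=r^*\phi$, and runs Proposition~\ref{prop:perturbguts1} so that the guts of the resulting $S$ sit inside the two copies of $N$. It then applies Lemma~\ref{lem:scholium} to the filtration $\ker(\pi_1(W)\to\pi_1(N)\to\pi/\pi_k)$ --- which is \emph{not} an RFRS filtration of $\pi_1(W)$, only one satisfying the weaker hypothesis of that lemma for these particular guts --- and finally pulls the fibered class back to $\wti N$ using the Eisenbud--Neumann additivity of the Thurston norm and fiberedness criterion along JSJ pieces (\cite[Proposition~3.5 and Theorem~4.2]{EN85}). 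None of this is ``routine,'' and a correct proof must choose the doubling route and supply the Eisenbud--Neumann step.

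One smaller remark: you take $\psi$ non-trivial on \emph{every} visible gut at once. This does exist (each vanishing condition is a proper subspace), but it is more than needed; the paper only asks $\psi$ to see a single equivalence class of visible guts of maximal complexity, which is exactly what the equivalence-class counter $n_v$ is designed to track.
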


In Section \ref{sec1} we will provide the proof of Theorem \ref{thm:ag08} in the case of closed $3$-manifolds.
In Section \ref{sec2} we will then deduce the case of non-trivial boundary from the closed case by a `doubling' argument.

\subsection{The Virtual Fibering Theorem for closed $3$-manifolds}\label{sec1}

In this section  we will give a proof of Theorem \ref{thm:ag08} in the case that $N$ is a closed irreducible connected $3$-manifold
with  virtually RFRS fundamental group. Let $\phi\in H^1(N;\Q)$ be non--trivial.
In light of Lemma \ref{lem:subordinate} (4) we can without loss of generality  assume that $\pi=\pi_1(N)$ is already RFRS.
We can therefore find a filtration  $\pi=\pi_0\supset \pi_1 \supset \pi_2\dots $
such that the following hold:
\bn
\item $\cap_k \pi_k=\{1\}$,
\item for any $k$ the group $\pi_k$ is a normal, finite index subgroup of $\pi$,
\item for any $k$ the map $\pi_k\to \pi_k/\pi_{k+1}$ factors through $\pi_k\to H_1(\pi_k;\Z)/\mbox{torsion}$.
\en
Given a non-trivial subgroup $\G\subset \pi_k$ we define its \emph{invisibility $i(\G\subset \pi_k)$} as follows:
\[ i(\G\subset \pi_k):=\min\{ l\in\N\,|\, \G\subset \pi_{k+l}\mbox{ and }H_1(\G;\Z)\to  H_1(\pi_{k+l};\Z)/\mbox{torsion}\mbox{ is non-trivial}\}.\]
It follows from Properties (1) and (3) of a RFRS group that  the invisibility of any non-trivial subgroup is defined.

In the following, given $k\in \N$, we denote by $N_k$ the cover of $N$ corresponding to $\pi_k$ and for $j\geq k$ we denote the covers $N_j\to N_k$  by $q$.
Now let $R\subset N_k$  be  a good Thurston norm minimizing surface. We pick an admissible decomposition surface for $N_k(R)$. 
We  say that
two  guts $G$ and $G'$ of $N_k(R)$ are \emph{equivalent} if there exists a deck transformation $\Phi$ of the covering $N_k\to N$
 and an isotopy $\Psi$ of $N_k$ such that $\Psi\circ \Phi$
 restricts to  a diffeomorphism $G\to G'$ of sutured manifolds.
Note that equivalent guts have in particular the same complexity.

We can now introduce the following invariants:
\[  \ba{rcl}
m_c(N_k,R)\hspace{-0.15cm}&:=\hspace{-0.15cm}&\mbox{maximal complexity of a  gut of }N_k(R),\\
n_c(N_k,R)\hspace{-0.15cm}&:=\hspace{-0.15cm}&\mbox{number\,of\,equivalence\,classes\,of\,guts\,of\,$N_k(R)$\,with\,maximal\,complexity},\\
i(N_k,R)\hspace{-0.15cm}&:=\hspace{-0.15cm}&\mbox{maximal invisibility among all guts of }N_k(R)\mbox{ of maximal complexity},\\
m_v(N_k,R)\hspace{-0.15cm}&:=\hspace{-0.15cm}&\mbox{maximal complexity of a visible gut of }N_k(R),\\
n_v(N_k,R)\hspace{-0.15cm}&:=\hspace{-0.15cm}&\mbox{number of equivalence classes of visible guts of $N_k(R)$}\\
\hspace{-0.15cm}&\hspace{-0.15cm}&\mbox{with maximal complexity}.\ea\]
If $N_k(R)$ has  no guts, then all these invariants are understood to be $0$.

We finally define the complexity $f(N_k,R)$ to be the lexico-graphically ordered quintuple
\[ f(N_k,R):=(m_c(N_k,R),n_c(N_k,R),i(N_k,R),m_v(N_k,R),n_v(N_k,R)),\]
where we take the minimum over all admissible decomposition surfaces for $N_k(R)$. 
Note that   $f(N_k,R)$ is the zero vector if and only if $N_k(R)$ is a product, i.e. if $R$ is a fiber of a fibration.

We now want to prove the following lemma, which by the above implies the theorem.

\begin{lemma}
Let $R$ be a good Thurston norm minimizing surface   in $N$.
Then there exists a $j$ and a  good Thurston norm minimizing surface $R_j$ in $N_j$  such that the following two conditions hold:
\bn
\item $q^*([R])\in H_2(N_j;\Z)$ is subordinate to $[R_j]$, and
\item $f(N_j,R_j)$ is the zero vector.
\en
\end{lemma}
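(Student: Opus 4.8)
The plan is to prove the lemma by induction on the complexity $f(N_k,R)$, using the well-ordering property of $\OO$ (and the finiteness of the other coordinates, which are non-negative integers) to guarantee termination. The base case is when $f(N_k,R)$ is already the zero vector, in which case there is nothing to do and we take $R_j = R$. For the inductive step, assume $f(N_k,R) \neq 0$, so $N_k(R)$ has at least one gut. First I would fix an admissible decomposition surface for $N_k\sm R\times(-4,4)$ realizing the minimum in the definition of $f(N_k,R)$, and pick a gut $G$ of $N_k(R)$ of maximal complexity $m_c(N_k,R)$ whose invisibility $i(G\subset \pi_k)$ equals $i(N_k,R)$. Set $l := i(N_k,R)$ so that the image of $H_1(G;\Z)$ in $H_1(\pi_{k+l};\Z)/\mathrm{torsion}$ is non-trivial; pass to the cover $N_{k+l}\to N_k$.

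Next I would choose the homology class to perturb by. Using property (3) of the RFRS filtration, the map $H_1(\pi_{k+l};\Z)\to H_1(\pi_{k+l};\Z)/\mathrm{torsion}$ together with the non-triviality of the image of $H_1(G;\Z)$ lets me find a class $\psi\in H_2(N_{k+l};\Z)$ (Poincar\'e dual to an element of $H^1(N_{k+l};\Z)$ that does not vanish on a chosen lift $\widetilde G$ of $G$) such that $[F\cap \widetilde G]\neq 0\in H_2(\widetilde G,\partial\widetilde G;\Z)$ for the surface $F$ provided by Proposition \ref{prop:perturbguts1}. Here I would apply Proposition \ref{prop:pullbackguts} to see that the guts and windows of $N_{k+l}\sm (q^*R)\times(-4,4)$ are exactly the preimages of those downstairs, so the maximal-complexity guts upstairs are the lifts of the maximal-complexity guts of $N_k(R)$, and I can keep track of equivalence classes and visibility under the covering. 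Then I apply Proposition \ref{prop:perturbguts1} to $N_{k+l}$, the pulled-back surface $q^*R$, and $\psi$, obtaining $m\in\N$ and a surface $F$ with Properties (W1)--(W5), and set $R' := (q^*R\times -3\cup q^*R\times 3)\uplus F = S$, which by (W2), (W5) and Lemma \ref{lem:doublecurvesum} is a good Thurston norm minimizing surface, with $q^*[q^*R]$ subordinate to $[R']$ by (W1) and transitivity of subordination (Lemma \ref{lem:subordinate}).

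Now I would invoke Proposition \ref{prop:cutguts} to compare $f(N_{k+l},R')$ with $f(N_k,R)$. The key points are: every gut of $N_{k+l}(R')$ lies inside a (lift of a) gut of $N_k(R)$ and has complexity $\le$ that of its containing gut, so $m_c$ does not increase; a gut of maximal complexity $m_c$ of $N_{k+l}(R')$ can only arise inside a lift of a maximal-complexity gut $G_i$ of $N_k(R)$ for which alternative (4a)/(5) of Proposition \ref{prop:cutguts} fails, i.e. via alternative (4b) — an isotopy-diffeomorphism $G_i\to G_i'$ — and in particular with $[F\cap G_i]=0$. For the distinguished lift $\widetilde G$ of $G$ we arranged $[F\cap\widetilde G]\neq 0$, so by statement (5) of Proposition \ref{prop:cutguts} all elements of $\Phi(\widetilde G)$ have strictly smaller complexity; combined with the fact that equivalent guts are cut identically (deck transformations of $N_{k+l}\to N$ commute with the whole construction), this kills an entire equivalence class of maximal-complexity guts, or — if no maximal-complexity gut survives — strictly lowers $m_c$. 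If $m_c$ strictly drops, $f$ drops in the first coordinate; otherwise $m_c$ and the surviving equivalence classes of maximal-complexity guts are among those of $N_k(R)$, so $n_c$ strictly drops unless the collection of equivalence classes is unchanged, in which case $i(N_{k+l},R')$ strictly drops because the class of invisibility $l$ has been eliminated (we chose $G$ of maximal invisibility, and $\psi$ was chosen to make its guts visible at level $k+l$); if all of $m_c, n_c, i$ are preserved then the maximal-complexity guts are all unchanged up to equivalence and invisibility, and one checks via statement (4) of Proposition \ref{prop:cutguts} that $(m_v,n_v)$ also does not increase and strictly decreases in at least one of the relevant ways — here one uses that guts of $N(R)$ satisfying (4a) drop in complexity while the invisible ones stay invisible by (3). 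In every case $f(N_{k+l},R') < f(N_k,R)$ in the lexicographic order, so by the inductive hypothesis applied to $N_{k+l}$ and $R'$ there is a $j\ge k+l$ and a good Thurston norm minimizing $R_j$ in $N_j$ with $q^*[R']$ subordinate to $[R_j]$ and $f(N_j,R_j)=0$; transitivity of subordination finishes the proof.

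The main obstacle I expect is the careful bookkeeping in the last paragraph: ensuring that the quintuple $f$ genuinely strictly decreases, which requires (i) showing the distinguished gut $G$ can be made visible at level $k+l$ by a single class $\psi$ simultaneously with controlling all other guts — this is exactly what the invisibility function and property (3) of the RFRS filtration are designed to give — and (ii) checking that equivalence classes of guts behave well under both the covering $N_{k+l}\to N_k$ (Proposition \ref{prop:pullbackguts}) and the cutting by $F$ (Proposition \ref{prop:cutguts}), so that eliminating one equivalence class does not accidentally create new maximal-complexity guts in a different class. The role of choosing $\psi$ via the maximal-invisibility gut is precisely to make the third coordinate $i(N_k,R)$ a genuine termination measure once $m_c$ and $n_c$ have stabilized.
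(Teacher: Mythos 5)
Your proposal has the right overall shape --- a descent argument on the well-founded complexity $f$, using the RFRS filtration to make a distinguished gut visible and then Propositions \ref{prop:perturbguts1} and \ref{prop:cutguts} to cut it --- but there is a genuine gap at the step where you jump directly to the cover $N_{k+l}$ with $l=i(N_k,R)$.

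Proposition \ref{prop:pullbackguts} only tells you that the guts of $N_{k+l}$ are the \emph{preimages} of the guts of $N_k$. A preimage of a gut is a (possibly nontrivial) finite cover of it, and Gabai's complexity is not preserved under passing to covers; in general a connected cover of a gut has strictly larger complexity. Now suppose some gut $G'$ of $N_k(R)$ has invisibility $l'<l$ (this can happen whether or not $G'$ has maximal complexity). Then $G'$ becomes visible at level $k+l'$, and at that point $\pi_1(G')$ need not be contained in $\pi_{k+l'+1}$, so the preimage of $G'$ in $N_{k+l}$ consists of nontrivial connected covers of $G'$, which can have complexity exceeding $m_c(N_k,R)$. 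Your assertion that ``a gut of maximal complexity $m_c$ of $N_{k+l}(R')$ can only arise inside a lift of a maximal-complexity gut $G_i$ of $N_k(R)$'' is therefore false, and the first coordinate $m_c$ of $f$ can increase after the lift. The paper sidesteps this by never combining the two operations: when \emph{every} gut of $N_k(R_k)$ is invisible it passes only to $N_{k+1}$ without cutting, where invisibility guarantees each gut lifts to disjoint \emph{copies} of itself (so $(m_c,n_c)$ is unchanged and $i$ drops by one); when some gut is visible it does not change the cover at all, but stays in $N_k$, cuts, and argues via Proposition \ref{prop:cutguts} that $(m_c,n_c,i)$ cannot increase while $(m_v,n_v)$ strictly drops.

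A second, smaller gap: you choose $\psi$ nonzero on a single lift $\widetilde G$ and then claim equivalent guts ``are cut identically because deck transformations commute with the whole construction.'' That is not justified --- the equivalence relation on guts allows an arbitrary isotopy of $N_k$ in addition to a deck transformation, and the surface $F$ produced by Proposition \ref{prop:perturbguts1} is not canonical, so there is no equivariance to invoke. What is actually needed (and what the paper does) is to pick $\psi$ a priori nonzero on \emph{every} gut in the chosen equivalence class; this is possible because each such gut contributes a nonzero sublattice of $H_1(N_k;\Z)/\mbox{torsion}$ and finitely many nonzero sublattices cannot all lie in the kernel of every integral functional. With that choice, Proposition \ref{prop:cutguts}(5) removes the whole equivalence class of maximal-complexity visible guts in one stroke, which is exactly what makes $n_v$ drop.
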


This lemma in turn follows from the following lemma:

\begin{lemma}\label{lem:lowercomplexity}
Let $R_k$ be a good Thurston norm minimizing surface   in $N_k$  such that  $f(N_k,R_k)$ is not the zero vector.
Then  there exists a $j\geq k$ and a
 good Thurston norm minimizing surface $\ti{R}_j$ in $N_j$
 such that
\bn
\item $q^*([R_k])\in H_2(N_j;\Z)$ is subordinate to $[\ti{R}_j]$, and
\item  $f(N_j,\ti{R}_j)<f(N_k,R_k)$.
\en
\end{lemma}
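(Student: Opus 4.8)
The plan is to realize the single reduction step promised by Lemma~\ref{lem:lowercomplexity} by applying Propositions~\ref{prop:perturbguts1} and~\ref{prop:cutguts} inside a suitable finite RFRS cover, and then lifting the surface back up so that the complexity quintuple $f$ strictly drops in the lexicographic order. First I would pick a gut $G$ of $N_k(R_k)$ realizing the top of the complexity: among guts of maximal complexity $m_c$ choose an equivalence class achieving the maximal invisibility $i(N_k,R_k)$, and let $G$ be a representative. Write $l=i(G\subset\pi_k)$, so $\pi_1(G)$ (viewed as a subgroup of $\pi_k=\pi_1(N_k)$) maps nontrivially to $H_1(\pi_{k+l};\Z)/\mathrm{torsion}$ but trivially to $H_1(\pi_{k+i};\Z)/\mathrm{torsion}$ for $i<l$. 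By Lemma~\ref{lem:guts}(2) the group $\pi_1(G)$ is nontrivial, so by Property~(1) of the RFRS filtration such an $l$ exists and is finite. Pass to the cover $N_j\to N_k$ with $j=k+l$. There is a class $\psi\in H_2(N_j;\Z)$ — pulled back from a class in $H_2(N_{k+l};\Z)$ detecting the homology of (a lift of) $G$, i.e. dual to an element of $H^1(N_j;\Z)$ restricting nontrivially to $H^1(G;\Z)$ — such that a lift $\widetilde G$ of $G$ satisfies $[F\cap\widetilde G]\ne0\in H_2(\widetilde G,\partial\widetilde G;\Z)$ for the surface $F$ produced below.

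Next I would invoke Proposition~\ref{prop:pullbackguts}: the preimage $q^{-1}(R_k)$ is again a good Thurston norm minimizing surface (after discarding any bad components, which can't appear by tautness), its guts/windows are exactly the preimages of those of $N_k(R_k)$, and a pulled-back admissible decomposition surface works. Set $R=q^{-1}(R_k)$ in $N_j$ (abusing notation, and using that we may need to take good representatives; this is where Lemma~\ref{lem:subordinate}(4) and the isometry $x(q^*\phi)=[\text{deg}]\cdot x(\phi)$ keep everything Thurston-norm-minimizing). Now apply Proposition~\ref{prop:perturbguts1} with this $R$ and with the chosen $\psi$: this yields $m\in\N$ and a surface $F\subset N_j$ with Properties (W1)--(W5), where (W1) gives $[R]$ subordinate to $m[R]+\psi$ and $F$ represents $m[R]+\psi$. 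Then feed $(R,m,F)$ into Proposition~\ref{prop:cutguts}: with $S=(R\times-3\cup R\times3)\uplus F$ we get an admissible decomposition surface for $N_j(S)$ and, for each gut $G$ of $N_j\sm R\times(-4,4)$, a family $\Phi(G)$ of guts of $N_j(S)$ that together exhaust the guts of $N_j(S)$, with each $\Phi(G)\subset G$, invisibility preserved, complexities non-increasing, and strictly decreasing on any gut $G$ with $[F\cap G]\ne0$. Take $\widetilde R_j$ to be a good Thurston norm minimizing surface homologous to $S$ (equivalently dual to $m[R]+\psi$); subordination of $q^*[R_k]=[R]$ to $[\widetilde R_j]$ follows from (W1) together with Lemma~\ref{lem:subordinate}(3), and $N_j(\widetilde R_j)$ has the same guts as $N_j(S)$ up to the standard product-vs-non-product identification, so $f(N_j,\widetilde R_j)$ is computed from $\{\Phi(G)\}$.

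It then remains to check the strict inequality $f(N_j,\widetilde R_j)<f(N_k,R_k)$ in the lexicographic order on the quintuple $(m_c,n_c,i,m_v,n_v)$. The argument is a bookkeeping one. Since every gut of $N_j(\widetilde R_j)$ lies inside some gut $G'$ of $N_j(R)$, which in turn is a lift of a gut $G$ of $N_k(R_k)$ of complexity $\le m_c(N_k,R_k)$, and complexities do not increase under the $\Phi$-operation, we get $m_c(N_j,\widetilde R_j)\le m_c(N_k,R_k)$. If the inequality is strict we are done; otherwise $m_c$ is unchanged. In that case every gut of maximal complexity in $N_j(\widetilde R_j)$ must come from a maximal-complexity gut $G$ of $N_k(R_k)$ on which the $\Phi$-operation was complexity-neutral, which by Proposition~\ref{prop:cutguts}(4) forces $[F\cap(\text{lift of }G)]=0$, hence by (W1) and the choice of $\psi$ forces the invisibility of the lift to be the same as that of $G$; meanwhile the distinguished gut $\widetilde G$ we started with had $[F\cap\widetilde G]\ne0$, so it is cut strictly and drops out of the top-complexity stratum (or at least out of its invisibility-maximizing equivalence class). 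The main obstacle — and the step deserving the most care — is precisely this last accounting: one must verify that (i) the RFRS-lifting genuinely produces a $\psi$ for which $[F\cap\widetilde G]\ne0$ (this uses that the invisibility $l$ was chosen minimal, so that in the cover $N_{k+l}$ the homology of $\widetilde G$ becomes visible for the first time, hence is detected by a pulled-back class $\psi$), and (ii) the equivalence relation on guts (deck transformations composed with isotopies) interacts correctly with $\Phi$, so that removing one equivalence class of maximal-complexity guts, or lowering its invisibility, is not compensated by the appearance of new equivalence classes of the same complexity and invisibility — which again follows because $\Phi$ never increases complexity or invisibility and only ever refines guts. Handling the distinction between the "visible" and "invisible" coordinates $(m_v,n_v)$ versus $(m_c,n_c)$ is the reason the quintuple has five entries rather than three, and tracking which of the five strictly decreases in each sub-case (top complexity drops; or stays and the count $n_c$ drops; or stays and invisibility $i$ drops; or all of those stay and then the visible-gut complexity $m_v$ drops; or finally $n_v$ drops) is the bulk of the remaining work.
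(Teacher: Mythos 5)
Your plan diverges from the paper's in a way that introduces a genuine gap. The paper's proof of this lemma proceeds by a clean dichotomy: \emph{either} every gut of $N_k(R_k)$ is invisible, in which case one passes to the single cover $N_{k+1}$, observes that every gut lifts to diffeomorphic copies of itself (since invisibility means $\pi_1(G)\subset\pi_{k+1}$), and concludes $m_c,n_c$ unchanged while $i$ drops by one; \emph{or} some gut is visible, in which case one stays in $N_k$, picks a maximal-complexity \emph{visible} gut $G$ (of complexity $m_v$, possibly strictly less than $m_c$), chooses $\psi$ restricting nontrivially to each gut in the equivalence class of $G$, cuts via Propositions~\ref{prop:perturbguts1} and~\ref{prop:cutguts}, and concludes $(m_c,n_c,i)$ does not increase while $(m_v,n_v)$ strictly decreases. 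Covering and cutting are never done in the same step.

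Your proposal instead chooses $G$ of maximal complexity $m_c$ with maximal invisibility $l=i(N_k,R_k)$, covers up $l$ levels to $N_{k+l}$, and then cuts. The problem is the step where you assert $m_c(N_j,\widetilde R_j)\le m_c(N_k,R_k)$ by saying that guts of $N_j(q^{-1}R_k)$ are lifts of guts of $N_k(R_k)$ of complexity $\le m_c$. That reasoning only works when each such gut lifts diffeomorphically, i.e.\ when $\pi_1(\text{gut})\subset\pi_j$. But your cover $N_j=N_{k+l}$ is chosen so that $G$ first becomes visible; any \emph{other} gut $H$ of $N_k(R_k)$ (of lower complexity, or of complexity $m_c$ but smaller invisibility) that becomes visible at some intermediate level $k+i$ with $i<l$ has $\pi_1(H)\not\subset\pi_{k+i+1}$, so its preimage in $N_j$ is a nontrivial finite cover of $H$. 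Gabai's complexity $c(\,\cdot\,)$ is not monotone under passing to nontrivial finite covers (Proposition~\ref{prop:pullbackguts} guarantees that guts lift to guts and tautness is preserved, but says nothing about $c$), so such a lifted gut can have complexity strictly larger than $m_c(N_k,R_k)$. This destroys the first coordinate of your lexicographic bound and the whole induction. The secondary issues you flag — making $\psi$ hit \emph{all} lifts of $G$ rather than just one, and the interaction of the equivalence relation with $\Phi$ — are real but repairable along the lines of the paper's Case 2; the covering step is the one that cannot be fixed without essentially reverting to the paper's one-level-at-a-time strategy in the all-guts-invisible case.
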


We pick an admissible decomposition surface for $N_k(R_k)$ which realizes $f(N_k,R_k)$. 
In our proof of Lemma \ref{lem:lowercomplexity} we first suppose  that every gut of $N_k(R_k)$ is invisible.
We then consider the covering $q\colon N_{k+1}\to N_k$ and we write $R_{k+1}=q^{-1}(R_k)$.
It follows from Proposition \ref{prop:pullbackguts} that
the guts of $N_{k+1}(R_{k+1})$ are precisely the preimages under $q$ of the guts of $N_k(R_k)$.
Now note that if $G$ is a gut of $N_k(R_k)$, then the assumption that $G$ is invisible implies that the map
\[ \pi_1(G)\to \pi_1(N_k)\to H_1(N_k;\Z)/\mbox{torsion} \to \pi_k/\pi_{k+1}\]
 is trivial. This implies that the components of $q^{-1}(G)$
are all diffeomorphic to $G$.
It follows  that $m_c(N_{k+1},R_{k+1})=m_c(N_k,R_k)$.

Note that all the components of $q^{-1}(G)$ are furthermore equivalent.
Since $N_{k+1}\to N$ is a regular cover it now  follows easily that two guts of $N_{k+1}(R_{k+1})$
are equivalent if and only if their projections to $N_k(R_k)$ are equivalent.
We thus see  that  $n_c(N_{k+1},R_{k+1})=n_c(N_k,R_k)$.
On the other hand we clearly have $i(N_{k+1},R_{k+1})=i(N_k,R_k)-1$.
We thus showed that $f(N_{k+1},R_{k+1})<f(N_k,R_k)$.

We now turn to the case that there exists a gut of $N_k(R_k)$ which is visible.
Among all visible guts of $N_k(R_k)$ we take a gut $G$ of maximal complexity.
We denote by $G_1=G,G_2,\dots,G_l$ the guts which are equivalent to $G$.
Note that all these guts are  also visible. There exists therefore
a homomorphism $H_1(N;\Z)\to \Z$ which is non-trivial when restricted to each $G_j$.
Put differently, there exists a $\psi\in H_2(N_k;\Z)=H^1(N_k;\Z)$ such that the restriction to each $G_j$ is non-zero.

By Proposition \ref{prop:perturbguts1} there exists an $m\in \N$ such that $[R_k]$ is subordinate to $m[R_k]+\psi$
and a surface $F$ in $N_k$  which  represents $m[R_k]+\psi$ and which has  Properties (W2) to (W5).
We set $S=(R\times -3\cup R\times 3)\uplus F$.
It now suffices to show the following claim:

\begin{claim}
\[ f(N_k,S)< f(N_k,R_k).\]
\end{claim}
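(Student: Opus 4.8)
Here is how I would approach proving the Claim.

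The plan is to run Proposition~\ref{prop:cutguts} on the surface $F$ just produced (which has Properties (W1)--(W5)), taking for the admissible decomposition surface of $N_k\sm R_k\times(-4,4)$ the one that realises $f(N_k,R_k)$ and identifying, as usual, the guts of $N_k(R_k)$ with the guts of $N_k\sm R_k\times(-4,4)$. This yields an admissible decomposition surface $B$ for $N_k(S)$ together with an assignment $G\mapsto\Phi(G)$ from guts of $N_k(R_k)$ to sets of guts of $N_k(S)$ satisfying (1)--(5) of that proposition. Because $f(N_k,S)$ is by definition a minimum over admissible decomposition surfaces of $N_k(S)$, it suffices to show that the quintuple $f'=(m_c',n_c',i',m_v',n_v')$ read off from $B$ already satisfies $f'<f(N_k,R_k)$.

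The first thing I would check is a homological identity: for any gut $G$ of $N_k(R_k)$ the restriction of $[R_k]$ to $H_2(G,\partial G;\Z)$ vanishes, since a parallel copy $R_k\times\{2\}$ of $R_k$ lies outside $N_k\sm R_k\times(-4,4)\supset G$; hence by (W1), which gives $[F]=m[R_k]+\psi$, we get $[F\cap G]=\psi|_{H_2(G,\partial G;\Z)}$. Applied to the chosen visible gut $G_1$ and its equivalents $G_2,\dots,G_l$, and using that $\psi$ restricts non-trivially to each $G_j$, this gives $[F\cap G_j]\ne0$, so Proposition~\ref{prop:cutguts}~(5) makes every member of $\Phi(G_j)$ strictly less complex than $G_j$. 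This is the mechanism that destroys the equivalence class of $G_1$.

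Next I would set up the bookkeeping. Every gut $J$ of $N_k(S)$ lies in a unique $\Phi(G)$, has complexity at most that of $G$ (Proposition~\ref{prop:cutguts}~(4)), and is visible only if $G$ is (contrapositive of Proposition~\ref{prop:cutguts}~(3)); and if $J$ has exactly the complexity of $G$, then $[F\cap G]=0$ (else (5) applies) and $\Phi(G)=\{J\}$ with $J$ isotopic in $N_k$ to $G$, so that $J$ and $G$ determine the same equivalence class and, in particular, have the same complexity and the same invisibility --- the latter because the equivalence relation is generated by isotopies of $N_k$ and deck transformations of $N_k\to N$ and the RFRS filtration of $\pi$ consists of subgroups normal in $\pi$. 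From this I would deduce $m_c'\le m_c(N_k,R_k)$ and $m_v'\le m_v(N_k,R_k)$, and that $[J]\mapsto[G]$ is a well-defined \emph{injection} from the equivalence classes of maximal-complexity guts of $N_k(S)$ into those of $N_k(R_k)$, respecting complexity and invisibility, and likewise an injection from the equivalence classes of visible maximal-complexity guts of $N_k(S)$ into those of $N_k(R_k)$.

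Finally I would split into two cases. If $m_v(N_k,R_k)=m_c(N_k,R_k)$, then $[G_1]$ is a maximal-complexity class; by the first step it cannot lie in the image of the first injection (a preimage $J$ would give $J\cong G$ with $G$ a maximal-complexity gut equivalent to $G_1$, whence $[F\cap G]=0$ by the bookkeeping but $[F\cap G]\ne0$ by the first step), so either $m_c'<m_c(N_k,R_k)$ or else $m_c'=m_c(N_k,R_k)$ and then $n_c'<n_c(N_k,R_k)$. If $m_v(N_k,R_k)<m_c(N_k,R_k)$, I would descend through the coordinates of $f$: if $m_c$ or (with $m_c$ fixed) $n_c$ drops we are done; otherwise $m_c,n_c$ are unchanged, the first injection is a bijection respecting invisibility, so $i'=i(N_k,R_k)$; if $m_v$ then drops we are done; and if $m_v$ is unchanged, the first step again shows $[G_1]$ is not in the image of the second injection, so $n_v'<n_v(N_k,R_k)$. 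In all cases $f'<f(N_k,R_k)$ lexicographically, which proves the Claim. The step I expect to be the real obstacle is this last one: organising the case analysis so that no coordinate earlier than the one recording the loss of $[G_1]$ can secretly increase --- which forces one to be careful that invisibility is an invariant of the equivalence classes and that the two maps of equivalence classes are genuinely injective.
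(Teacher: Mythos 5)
Your proposal is correct and follows essentially the same route as the paper: apply Proposition~\ref{prop:cutguts} to the decomposition surface realising $f(N_k,R_k)$, use $[R_k]|_G=0$ to see $[F\cap G_j]=\psi|_{G_j}\ne 0$ and thus destroy the class of $G_1$ via part~(5), and then argue that the first three coordinates of $f$ cannot increase while the last two strictly decrease. Your treatment is somewhat more explicit than the paper's (which simply records the two lexicographic inequalities $(m_c,n_c,i)(S)\le (m_c,n_c,i)(R_k)$ and $(m_v,n_v)(S)<(m_v,n_v)(R_k)$ and combines them), in particular in spelling out the injectivity of the induced map on equivalence classes of maximal-complexity guts and in verifying that invisibility is an invariant of the equivalence relation, but the underlying mechanism is identical.
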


\begin{figure}
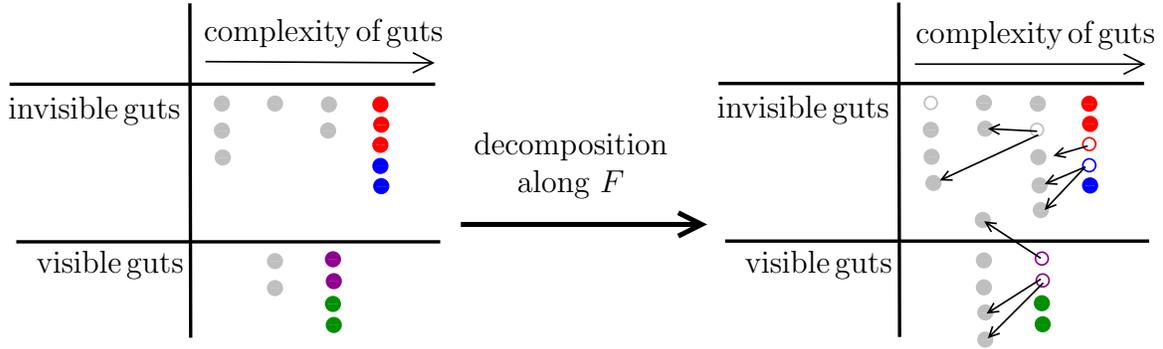
\caption{Schematic picture of the decomposition of guts along $F$: guts get cut into pieces of smaller complexity and invisible guts stay invisible. The colors indicate equivalence classes of guts.}\label{fig:decomposeguts}
\end{figure}

We equip $N_k(S)$ with the admissible decomposition surface coming from Proposition \ref{prop:cutguts}.
We then  note that it  follows immediately from Proposition \ref{prop:cutguts} (1) and (4)  that (up to isotopy)
\[ \{\mbox{guts of $N_k(S)$ of complexity $m_c(N_k,R_k)$}\}\subset  \{\mbox{guts of $N_k(R_k)$ of complexity $m_c(N_k,R_k)$}\},\]
and that furthermore no gut of $N_k(S)$ has complexity larger than $m_c(N_k,R_k)$. It follows that
\be \label{equ:mc} (m_c(N_k,S),n_c(N_k,S),i(N_k,S))\leq (m_c(N_k,R_k),n_c(N_k,R_k),i(N_k,R_k)).\ee
Furthermore it follows from Proposition \ref{prop:cutguts} (1), (3) and (4) that $N_k(S)$ contains no visible gut   of complexity larger than
$m_v(N_k,R_k)$ and that
\[ \ba{l}\{\mbox{visible guts of $N_k(S)$ of complexity $m_v(N_k,R_k)$}\}\\
\hspace{3cm} \subset  \{\mbox{visible guts of $N_k(R_k)$ of complexity $m_v(N_k,R_k)$}\}. \ea\]
Now note that for any $G_j$ we have
\[ [S]|_{G_j}=(m[R_k]+\psi)|_{G_j}=\psi|_{G_j}\ne 0\in H_2(G_j,\partial G_j;\Z).\]
It therefore follows from Proposition \ref{prop:cutguts} (4) and (5)  that
\[\ba{ll} &\# \{\mbox{equivalence classes of visible guts of $N_k(S)$ of complexity $m_v(N_k,R_k)$}\}\\
\leq & \#\{\mbox{equivalence classes of visible guts of $N_k(R_k)$ of complexity $m_v(N_k,R_k)$}\}\,-1.\ea \]
Putting these observations together we see that
\be\label{equ:mh} (m_v(N_k,S),n_v(N_k,S))< (m_v(N_k,R_k),n_v(N_k,R_k)).\ee
Combining the inequalities (\ref{equ:mc}) and (\ref{equ:mh})  we see that $f(N_k,S)<f(N_k,R_k)$.
This concludes the proof of the claim and thus of Theorem \ref{thm:ag08}.
\medskip

For the purpose of the next section we also state the following lemma which we implicitly proved in the above:

\begin{lemma}\label{lem:scholium}
Let $N$ be a closed irreducible $3$-manifold and let $R\subset N$ be a good Thurston norm minimizing surface.
We pick an admissible decomposition surface for $N(R)$.
Suppose there exists a filtration  $\pi=\pi_0\supset \pi_1 \supset \pi_2\dots $
such that the following hold:
\bn
\item for any gut $G$ of $N(R)$ we have $\cap_k (\pi_1(G)\cap \pi_k)=\{1\}$,
\item for any $k$ the group $\pi_k$ is a normal, finite index subgroup of $\pi$,
\item for any $k$ the map $\pi_k\to \pi_k/\pi_{k+1}$ factors through $\pi_k\to H_1(\pi_k;\Z)/\mbox{torsion}$.
\en
Then  there exists a finite cover $q\colon \ti{N}\to N$ such that  $q^*([R])$ is subordinate to a fibered class.
\end{lemma}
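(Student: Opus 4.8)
The plan is to re-run the proof of Theorem \ref{thm:ag08} for closed manifolds almost verbatim. The point is that in that proof the full RFRS hypothesis $\cap_k\pi_k=\{1\}$ was used in exactly one place, namely to guarantee that the \emph{invisibility} $i(\pi_1(G)\subset\pi_k)$ of a gut $G$ is a well-defined natural number, and that hypotheses (1) and (2) of the lemma still suffice for this.

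So first I would set up the same data as in the closed case: for each $k$ let $N_k\to N$ be the cover corresponding to $\pi_k$, copy the definition of the invisibility $i(\G\subset\pi_k)$ of a non-trivial subgroup $\G\subset\pi_k$, and copy the definition of the complexity $f(N_k,R)=(m_c,n_c,i,m_v,n_v)$, taken over all admissible decomposition surfaces. The whole argument then consists of proving the analogue of Lemma \ref{lem:lowercomplexity}: starting from a good Thurston norm minimizing surface $R_k$ in $N_k$ with $f(N_k,R_k)$ not the zero vector, produce $j\ge k$ and a good Thurston norm minimizing surface $\ti R_j$ in $N_j$ with $q^*[R_k]$ subordinate to $[\ti R_j]$ and $f(N_j,\ti R_j)<f(N_k,R_k)$; since $f$ takes values in a lexicographically ordered set in which every strictly descending chain terminates, iterating this and using Lemma \ref{lem:subordinate} (transitivity and compatibility of subordination with pullback) produces a cover in which $R$ pulls back, up to subordination, to a fibered class, which is the assertion. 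The two cases of that induction — passing from $N_k$ to $N_{k+1}$ when every gut of $N_k(R_k)$ is invisible, or applying Propositions \ref{prop:perturbguts1} and \ref{prop:cutguts} to cut a visible gut of maximal complexity — use nothing about the filtration beyond Properties (2) and (3) and the fact that the invisibility of each gut that occurs is defined.

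Thus the only point that needs a new argument is that the invisibility is always defined. Recall that a non-trivial $\G\subset\pi_k$ has $i(\G\subset\pi_k)$ defined as soon as the set $\{l\mid \G\subset\pi_l\}$ is bounded: if $l_0$ is its maximum then $\G\subset\pi_{l_0}$ but $\G\not\subset\pi_{l_0+1}$, so by Property (3) the composition $\G\hookrightarrow\pi_{l_0}\to\pi_{l_0}/\pi_{l_0+1}$ is non-trivial, hence so is $H_1(\G;\Z)\to H_1(\pi_{l_0};\Z)/\mbox{torsion}$. So it is enough to know $\G\not\subset\cap_l\pi_l$. I would then check that every gut $G$ arising during the induction satisfies $\cap_l(\pi_1(G)\cap\pi_l)=\{1\}$ inside the ambient $\pi_k$; since $\pi_1(G)\ne\{1\}$ by Lemma \ref{lem:guts}, this gives $\pi_1(G)\not\subset\cap_l\pi_l$ and we are done. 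For the original guts of $N(R)$ this is hypothesis (1). It is inherited under cutting: a gut $G'\in\Phi(G)$ from Proposition \ref{prop:cutguts} satisfies $\pi_1(G')\subset\pi_1(G)$, so $\cap_l(\pi_1(G')\cap\pi_l)\subset\cap_l(\pi_1(G)\cap\pi_l)=\{1\}$. And it is inherited under passing to a cover: by Proposition \ref{prop:pullbackguts} a gut $\ti G$ of the pulled-back surface in $N_{k'}$ is a component of the preimage of a gut $G$, so $\pi_1(\ti G)=g\pi_1(G)g^{-1}\cap\pi_{k'}$ for some $g$ in the ambient group; since each $\pi_l$ is normal in $\pi$ (Property (2)) and $\pi_l\subset\pi_{k'}$ for $l\ge k'$, one computes $\cap_l(\pi_1(\ti G)\cap\pi_l)=g\pi_1(G)g^{-1}\cap\bigl(\cap_l\pi_l\bigr)=g\bigl(\pi_1(G)\cap\bigl(\cap_l\pi_l\bigr)\bigr)g^{-1}=\{1\}$.

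The remaining bookkeeping — that the surfaces $R_k$ stay good and Thurston norm minimizing, that the product components of $N_k(R_k)$ disappear exactly when $f(N_k,R_k)$ is the zero vector so that a fibration appears, and that subordination accumulates correctly along the tower — is literally the same as in the proof of Theorem \ref{thm:ag08}, and I would simply cite it. The only mildly delicate step, and the one I expect to be the main obstacle and hence worth writing out in full, is the conjugation-plus-normality computation in the previous paragraph: it is precisely this that replaces the use of $\cap_k\pi_k=\{1\}$ and makes the weaker hypothesis (1) enough.
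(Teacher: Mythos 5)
Your proof is correct and is essentially what the paper intends: the paper dispatches this lemma with the one-line remark that it was ``implicitly proved in the above,'' meaning the proof of Theorem \ref{thm:ag08} for closed manifolds, and you have correctly identified the single place where the full RFRS condition $\cap_k\pi_k=\{1\}$ is used (well-definedness of the invisibility $i(\G\subset\pi_k)$) and verified that the weaker hypothesis (1) is inherited both under cutting guts (via Proposition \ref{prop:cutguts}(2) and $\pi_1$-injectivity from Lemma \ref{lem:guts}) and under passage to the covers $N_{k'}\to N_k$ (via the conjugation-plus-normality computation). The conjugation computation is stated correctly: for a component $\wti G$ of $q^{-1}(G)$ one has $\pi_1(\wti G)=g\pi_1(G)g^{-1}\cap\pi_{k'}$ for some $g$, and since each $\pi_l$ is normal one gets $g\pi_1(G)g^{-1}\cap\pi_l=g(\pi_1(G)\cap\pi_l)g^{-1}$, whence $\cap_l(\pi_1(\wti G)\cap\pi_l)=g\bigl(\cap_l(\pi_1(G)\cap\pi_l)\bigr)g^{-1}=\{1\}$.
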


\subsection{The Virtual Fibering Theorem for  $3$-manifolds with non-trivial boundary}\label{sec2}
We will now give a proof of Theorem \ref{thm:ag08} in the case that $N$ has non-trivial toroidal boundary.
One approach would be to adapt the proof of the previous section.
In fact quickly browsing through the proof shows that the only aspect which needs to be modified is the statement and the proof of
Proposition \ref{prop:perturbguts1}. This can be done, but the proof of Proposition \ref{prop:perturbguts1} becomes
even less readable.

We therefore employ a slightly roundabout way which is inspired by the proof of \cite[Theorem~6.1]{Ag08}.
In the following let $N$ be an irreducible connected $3$-manifold
with non-trivial toroidal boundary such that $\pi_1(N)$ is   virtually RFRS.
The theorem trivially holds for $N=S^1\times D^2$, we therefore henceforth assume that $N\ne S^1\times D^2$.
 Let $\phi\in H^1(N;\Q)$ be non--trivial.
In light of Lemma \ref{lem:subordinate} (4) we can again  assume that $\pi=\pi_1(N)$ is already RFRS.
We pick a RFRS filtration $\{\pi_k\}_{k\in \N}\in \N$ for $\pi$.

We denote by $W$ the double of $N$ along its boundary, i.e. $W=N\cup_{\partial N=\partial N'}N'$ where $N'$ is a copy of $N$.
We consider the inclusion map $i\colon N\to  W$ and the  retraction $r\colon W\to N$.
We also consider $R:=\partial N=\partial N'\subset W$ and $\Phi:=r^*\phi\in H^1(W;\Z)=H_2(W;\Z)$.
Note that $R$ is a good surface since $N$ is irreducible and $N\ne S^1\times D^2$.
It follows from  Proposition \ref{prop:perturbguts1}  and the proof of Proposition \ref{prop:cutguts}
that there exists a surface $S$ of the form $S=F\uplus (R\times -1\cup R\times 1)$
such that $[S]=k[R]+\Phi$ for some $k\in \N$ and such that, for a suitable choice of admissible decomposition surface, the guts of $S$ are contained in $W\sm R\times (-1,1)$.

Note that the Thurston norm of $[R]$ is zero, it follows that $\Phi$ and $[S]$ lie in the same Thurston cone,
in particular $\Phi$ is  subordinate to $[S]$.
We now apply Lemma \ref{lem:scholium} to the filtration given by $\ker(\pi_1(W)\to \pi_1(N)\to \pi/\pi_k)$, $k\in \N$
and the surface $S$. Since each  gut of $S$ is contained in one of the two copies of $N$, and since $\{\pi_k\}_{k\in \N}\in \N$
is a RFRS filtration it follows that the
conditions of Lemma \ref{lem:scholium} are satisfied. There exists therefore a finite cover
$q\colon \ti{W}\to W$ such that  $q^*([S])$ is subordinate to a fibered class $\wti{\Psi}$.
It follows from Lemma \ref{lem:subordinate} that $\wti{\Phi}:=q^*\Phi$ is also subordinate to the fibered class $\wti{\Psi}$.

We now denote by $\wti{N}\subset \wti{W}$ a connected component of $q^{-1}(N)$.
We recycle the above notation by denoting  the covering map $\wti{N}\to N$ by $q$ and the inclusion map $\wti{N}\to \wti{W}$  by $i$.
Since $N\ne S^1\times D^2$ we can view  $\wti{N}$ as a union of JSJ components of $\wti{W}$.
It follows from \cite[Theorem~4.2]{EN85} that $\wti{\psi}:=i^*\wti{\Psi}\in H^1(\wti{N};\Q)$ is also fibered.

It remains to show that $\wti{\phi}:=q^*\phi$ is subordinate to $\wti{\psi}$.
We first note that the fact  that $\wti{\Phi}:=q^*\Phi$ is subordinate to $\wti{\Psi}$ implies that
\be \label{equ:xwtiw} x_{\wti{W}}(\wti{\Phi})+x_{\wti{W}}(\wti{\Psi})=x_{\wti{W}}(\wti{\Phi}+\wti{\Psi}).\ee
We denote by $\wti{M}$ the closure of $\wti{W}\sm \wti{N}$.
Note that $\wti{N}$ and $\wti{M}$ are a union of JSJ components of $\wti{W}$.
It now follows immediately from \cite[Proposition~3.5]{EN85} that for any class $\wti{\Theta}\in H^1(\wti{N};\Q)$ we have
\[ x_{\wti{W}}(\wti{\Theta})=x_{\wti{N}}(\wti{\Theta}|_{\wti{N}})+x_{\wti{M}}(\wti{\Theta}|_{\wti{M}}).\]
Since $x_{\wti{M}}$ is a seminorm it follows immediately from (\ref{equ:xwtiw}) that
\[ x_{\wti{N}}(\wti{\phi})+x_{\wti{N}}(\wti{\psi})=x_{\wti{N}}(\wti{\phi}+\wti{\psi}).\]
This shows that $\wti{\phi}$ and  $\wti{\psi}$ lie on the closure of a Thurston cone.
We now recall that the fact that $\wti{\psi}$ is fibered implies that $\wti{\psi}$ lies in a top dimensional Thurston cone.
Combining these two statements implies that $\wti{\phi}$ is in fact subordinate to the fibered class $\wti{\psi}$.

This concludes the proof of Theorem \ref{thm:ag08} in the case that $N$ has non-trivial boundary.



\begin{thebibliography}{10}
\bibitem[Ag08]{Ag08}
I. Agol, {\em Criteria for virtual fibering}, Journal of Topology 1: 269-284 (2008)
\bibitem[Ag13]{Ag13}
I. Agol, {\em The virtual Haken conjecture}, with an appendix by I. Agol, D. Groves and J. Manning, 
	Documenta Math. 18 (2013) 1045--1087.
\bibitem[ABZ08]{ABZ08}
I. Agol, S. Boyer and X. Zhang, {\em Virtually fibered Montesinos links},
J. Topol. 1 (2008), no. 4, 993--1018.
\bibitem[AR99]{AR99}
I. Aitchison and J. Rubinstein, {\em Polyhedral metrics and $3$-manifolds which are virtual bundles},
Bull. London Math. Soc. 31 (1999), no. 1, 90--96.
\bibitem[AFW12]{AFW12}
M. Aschenbrenner, S. Friedl and H. Wilton,
{\em 3-manifold groups}, preprint (2012), arXiv:1205.0202
\bibitem[Br99]{Br99}
M. Bridson, {\em Non-positive curvature in group theory}, Groups St. Andrews 1997 in Bath, I, 124--175, London Math. Soc. Lecture Note Ser., 260, Cambridge Univ. Press, Cambridge, 1999
\bibitem[Br01]{Br01}
M. Bridson, {\em On the subgroups of semihyperbolic groups},  Essays on
geometry and related topics, Vol. 1, 2, 85--111,
Monogr. Enseign. Math., 38, Enseignement Math., Geneva, 2001.
\bibitem[Bu05]{Bu05}
J. O. Button, {\em Fibred and virtually fibred hyperbolic $3$-manifolds in the censuses},
Exp. Math. 14 (2005), no. 2, 231--255.
\bibitem[CC03]{CC03}
A. Candel and L. Conlon, {\em Foliations II}, Graduate Studies in Mathematics, vol. 60 (2003).
\bibitem[DeB10]{DeB10}
J. DeBlois, {\em On the doubled tetrus},
Geom. Dedicata 144 (2010), 1--23.
\bibitem[EN85]{EN85}
D.  Eisenbud and W. Neumann, {\em  Three-dimensional link theory and
invariants of plane curve singularities}, Annals of Mathematics
Studies, 110. Princeton University Press, Princeton, NJ, 1985.
\bibitem[FV12]{FV12}
S. Friedl and S. Vidussi, {\em The Thurston norm and twisted Alexander polynomials}, Preprint (2012),
to be published by the Journal f\"ur die reine und angewandte Mathematik.
\bibitem[FV14]{FV14}
S. Friedl and S. Vidussi,  {\em Minimal Genus on 4-manifolds with a Free Circle Action}, 
Adv. Math.  250 (2014), 570--587.

\bibitem[Ga83]{Ga83} D. Gabai, {\em Foliations and the
topology of 3--manifolds}, J. Differential Geometry 18, no. 3: 445--503 (1983).
\bibitem[Ga86]{Ga86}
D. Gabai, {\em On $3$-manifolds finitely covered by surface bundles},
Low-dimensional topology and Kleinian groups (Coventry/Durham, 1984), 145-155, London
Math. Soc. Lecture Note Ser., 112, Cambridge Univ. Press, Cambridge, 1986.
\bibitem[GZ09]{GZ09}
X. Guo and Y. Zhang, {\em Virtually fibred Montesinos links of type $\widetilde{SL(2)}$}, Topology Appl. 156 (2009), no. 8, 1510--1533.
\bibitem[Ju06]{Ju06}
A. Juh\'asz, {\em Holomorphic discs and sutured manifolds},  Algebraic \& Geometric Topology 6 (2006) 1429-1457
\bibitem[Ju08]{Ju08}
A. Juh\'asz, {\em Floer homology and surface decompositions}, Geometry and Topology 12:1 (2008) 299-350
\bibitem[He76]{He76} J. Hempel, {\em $3$-Manifolds},
Ann. of Math. Studies, no. 86. Princeton University Press, Princeton, N. J., 1976.
\bibitem[Leb95]{Leb95}
B. Leeb, {\em $3$-manifolds with(out) metrics of nonpositive curvature},
Invent. Math. 122 (1995), 277--289.
\bibitem[Lei02]{Lei02}
C. Leininger, {\em Surgeries on one component of the Whitehead link are virtually fibered}, Topology
41 (2) (2002), 307--320.
\bibitem[Li11]{Li11}
Y. Liu, {\em Virtual cubulation of
nonpositively curved graph manifolds}, preprint (2011), to appear in J. Topology
\bibitem[MSY82]{MSY82}
W. H. Meeks, L. Simon and S.  Yau, {\em
Embedded minimal surfaces, exotic spheres, and manifolds with positive Ricci curvature},
Ann. of Math. (2) 116 (1982), no. 3, 621--659.
\bibitem[MS88]{MS88}
J.  Morgan and P. Shalen, {\em Degenerations of hyperbolic structures. II. Measured laminations in 3-manifolds}, Ann. of Math. (2) 127 (1988), no. 2, 403--456.
\bibitem[Oe86]{Oe86}
U. Oertel, {\em  Homology branched surfaces: Thurston's norm on $H_2(M^3)$}, Low-dimensional topology and Kleinian groups (Coventry/Durham, 1984), 253--272, London Math. Soc. Lecture Note Ser., 112, Cambridge Univ. Press, Cambridge, 1986.
\bibitem[PW11]{PW11} P. Przytycki and D. Wise, {\em Graph manifolds with boundary are virtually special}, Preprint (2011), to appear in J. Topology.
\bibitem[PW12]{PW12} P. Przytycki and D. Wise, {\em Mixed 3-manifolds are virtually special}, Preprint (2012).
\bibitem[Re95]{Re95}
A. Reid, {\em A non-Haken hyperbolic $3$-manifold covered by a surface bundle}, Pacific
J. Math. 167 (1995), 163--182.
\bibitem[Sc89]{Sc89}
M. Scharlemann, {\em Sutured manifolds and generalized Thurston norms}, J. Differential Geom. 29 (1989), no. 3, 557--614.
\bibitem[Th82]{Th82}
W. P. Thurston, {\em
Three dimensional manifolds, Kleinian groups and hyperbolic geometry},
Bull. Amer. Math. Soc., New Ser. 6 (1982), 357--379.
\bibitem[Th86]{Th86} W. P. Thurston, {\em A norm for the homology of 3--manifolds}, Mem.
Amer. Math. Soc. 339: 99--130 (1986).
\bibitem[TW96]{TW96}
J. Tollefson and N.  Wang, {\em Taut normal surfaces},
Topology 35 (1996), no. 1, 55--75.
\bibitem[Wa05]{Wa05}
G. Walsh, {\em  Great circle links and virtually fibered knots}, Topology 44 (2005), no. 5, 947--958.

\bibitem[Wi12]{Wi12}
D. Wise, {\em The structure of groups with a quasi-convex hierarchy}, 181 pages, preprint (2012),
downloaded on October 10, 2012 from the conference webpage for the NSF-CBMS conference `$3$-Manifolds, Artin Groups and Cubical Geometry', from August 1--5, 2011 held at the CUNY Graduate Center.

\end{thebibliography}
\end{document}